\newcommand{\R}{{\mathbb R}}
\newtheorem{thm}{Theorem}[section]
\newtheorem{coro}[thm]{Corollary}
\newtheorem{lemma}[thm]{Lemma}
\newtheorem{pro}[thm]{Proposition}
\theoremstyle{definition}
\newtheorem{definition}[thm]{Definition}
\newtheorem{example}[thm]{Example}
\newtheorem{remark}[thm]{Remark}
\numberwithin{equation}{section}
\newcommand{\Hmm}[1]{\leavevmode{\marginpar{\tiny%
$\hbox to 0mm{\hspace*{-0.5mm}$\leftarrow$\hss}%
\vcenter{\vrule depth 0.1mm height 0.1mm width \the\marginparwidth}%
\hbox to 0mm{\hss$\rightarrow$\hspace*{-0.5mm}}$\\\relax\raggedright
#1}}}
\begin{document}
\title[Higher order Buser inequalities]{Curvature and higher order Buser inequalities for the graph connection Laplacian}

\author{Shiping Liu}
\address{Department of Mathematical Sciences, Durham University, DH1 3LE Durham, United Kingdom}
\email{shiping.liu@durham.ac.uk}

\author{Florentin M\"unch}
\address{Institut f\"{u}r Mathematik,
Universit\"{a}t Potsdam,
14476 Potsdam, Germany}
\email{florentin.muench@uni-jena.de}

\author{Norbert Peyerimhoff}
\address{Department of Mathematical Sciences, Durham University, DH1 3LE Durham, United Kingdom}
\email{norbert.peyerimhoff@durham.ac.uk}

\begin{abstract}
We study the eigenvalues of the connection Laplacian on a graph with an orthogonal group or unitary group signature. We establish higher order Buser type inequalities, i.e., we provide upper bounds for eigenvalues in terms of Cheeger constants in the case of nonnegative Ricci curvature. In this process, we discuss the concepts of Cheeger type constants and a discrete Ricci curvature for connection Laplacians and study their properties systematically. The Cheeger constants are defined as mixtures of the expansion rate of the underlying graph and the frustration index of the signature. The discrete curvature, which can be computed efficiently via solving semidefinite programming problems, has a characterization by the heat semigroup for functions combined with a heat semigroup for vector fields on the graph.

\smallskip
\noindent \keywordsname:  connection Laplacian; Cheeger constants; discrete curvature; Buser inequality; semidefinite programming; Cartesian product.
\end{abstract}

\maketitle
\setcounter{tocdepth}{1}
\tableofcontents
\section{Introduction}

A graph structure with its Laplacian matrix provides a mathematical tool to analyze the similarities between data points: those points with large enough similarities are connected by an edge. One can also assign edge weights to quantify such similarities. In many applications, it is noticed that the representation of the data set can be vastly improved by endowing the edges of the graph additionally with linear transformations \cite{Harary53,SingerWu,BSS13}. For example, when the graph is representing a social network, we hope to attach to each edge an element from the one dimensional orthogonal group $O(1)=\{\pm 1\}$ to indicate two kinds of opposite relationships between members of the network (vertices). When the graph is representing higher dimensional data set, e.g., $2$-dimensional photos of a $3$-dimensional object from different views, one would like to assign to each edge an element of the orthogonal group $O(2)$ which optimally rotationally aligns photos when comparing their similarity (see, e.g., \cite{SingerWu,BSS13}). In theoretical research, assigning linear transformations to the edges of a graph also provides mathematical structures that have been found very useful in various topics, e.g., the study of Heawood map-coloring problem \cite{Gross74,GrossTucker74}, the construction of Ramanujan graphs \cite{BL06,MSS}, and the study of a discrete analogue of magnetic operators \cite{Sunada93,Shubin94}. The corresponding Laplacian of a graph with such an additional structure is called \emph{the connection Laplacian}, defined by Singer and Wu \cite{SingerWu}.

In fact, the connection Laplacian of a graph yields a very elegant and general mathematical framework for the analysis of massive data sets, which includes several extensively studied graph operators as particular cases, e.g., the classical Laplacian, the signless Laplacian \cite{DS09}, the Laplacian for Harary's signed graphs \cite{ZaslavskyMatrices,AtayLiu14}, and the discrete magnetic Laplacian \cite{Sunada93,Shubin94,LLPP15}.

In this paper, we study the spectra of the graph connection Laplacian, which are closely related to the geometric structure of the underlying graph with those transformations attached to its edges. We describe this geometric structure by introducing two types of quantities, Cheeger type constants and a discrete Ricci curvature. Our main theorem is concerned with higher order Buser type inequalities, showing the close relations between eigenvalues of the connection Laplacian and the Cheeger constants assuming nonnegativity of the discrete Ricci curvature. We also obtain a lower bound estimate of the first nonzero eigenvalue of the connection Laplacian by the lower Ricci curvature bound, i.e. we show a Lichnerowicz type eigenvalue estimate. In this process, the properties of the Cheeger constants and discrete Ricci curvature are explored systematically. In particular, our eigenvalues estimates help us to deepen the understanding of these two geometric quantities.

\subsection{Higher order Buser inequalities}
We now aim to state our main theorem (Theorem \ref{thm:introMain} below) more explicitly. We first introduce relevant notation. Let $G=(V,E)$ be an undirected simple finite graph with vertex set $V$ and edge set $E$. For simplicity, we restrict ourselves to unweighted $D$-regular graphs in this Introduction. Let $H$ be a group. For each edge $\{x,y\}\in E$, we assign an element $\sigma_{xy}\in H$ to it, such that
\begin{equation}\label{intro:signature}
\sigma_{yx}=\sigma_{xy}^{-1}.
\end{equation}
Actually, we are defining a map $\sigma:E^{or}\to H$, where $E^{or}:=\{(x,y), (y,x)\mid \{x,y\}\in E\}$ is the set of all oriented edges. We call $\sigma$ a \emph{signature} of the graph $G$. In this paper, we restrict the group $H$ to be the $d$ dimensional orthogonal group $O(d)$ or unitary group $U(d)$.

Then the \emph{(normalized) connection Laplacian} $\Delta^\sigma$, as a matrix, is given by
\begin{equation}\label{eq:introConnction Lap}
 \Delta^\sigma:=\frac{1}{D}A^\sigma-\mathrm{I}_{Nd},
\end{equation}
where $D$ is the (constant) vertex degree and $\mathrm{I}_{Nd}$ is a $(Nd)\times (Nd)$-identity matrix, $N$ the size of vertex set $V$, and $A^\sigma$ is the $(Nd)\times (Nd)$-matrix, blockwisely defined as
\begin{equation}
 (A^\sigma)_{xy}=\left\{
                   \begin{array}{ll}
                     0, & \hbox{if $\{x,y\}\not\in E$;} \\
                     \sigma_{xy}, & \hbox{if $(x,y)\in E^{or}$.}
                   \end{array}
                 \right.
\end{equation}
Due to (\ref{intro:signature}), $\Delta^\sigma$ is Hermitian. Hence all eigenvalues of the matrix $\Delta^{\sigma}$ are real. Note that the connection Laplacian $\Delta^\sigma$ in (\ref{eq:introConnction Lap}) is defined as a negative semidefinite matrix for our later purpose of defining the discrete curvature, due to a convention originating from Riemannian geometry. However, we still want to deal with nonnegative eigenvalues. Hence, when we speak of eigenvalues of the connection Laplacian $\Delta^{\sigma}$, we mean the eigenvalue of the matrix $-\Delta^\sigma$. They can be listed (counting multiplicity) as
\begin{equation}
0\leq \lambda_1^{\sigma}\leq \lambda_2^{\sigma}\leq  \cdots\lambda^{\sigma}_{d}\leq \cdots\leq\lambda^{\sigma}_{(N-1)d+1}\leq\lambda^{\sigma}_{(N-1)d+2}\leq \cdots\leq \lambda^{\sigma}_{Nd}\leq 2.
\end{equation}

Observe that two different signatures do not necessarily lead to different spectra. Given a function $\tau:V\to H$ and a signature $\sigma$, we consider the new signature $\sigma^\tau$ defined by
\begin{equation}
 \sigma_{xy}^\tau:=\tau(x)^{-1}\sigma_{xy}\tau(y),\,\,\forall\,(x,y)\in E^{or}.
\end{equation}
Then the corresponding connection Laplacians $\Delta^\sigma$ and $\Delta^{\sigma^\tau}$ are unitarily equivalent and hence share the same spectra. Indeed, it is easy to check that
\begin{equation}\label{intro:unitary}
 \Delta^{\sigma^\tau}=(M_\tau)^{-1}\Delta^\sigma M_\tau,
\end{equation}
where $M_\tau$ stands for the matrix given blockwisely by $(M_\tau)_{xx}:=\tau(x)$. We call the function $\tau$ a \emph{switching function}. Two signatures $\sigma$ and $\sigma'$ are said to be \emph{switching equivalent}, if there exists a switching function $\tau$ such that $\sigma'=\sigma^\tau$. It follows from (\ref{intro:unitary}), the eigenvalues of the connection Laplacian $\Delta^\sigma$ are switching invariant.

The Cheeger type constants $\{h_k^\sigma, k=1,2,\ldots, N\}$ and the discrete Ricci curvature $K_\infty(\sigma)$ that we are going to introduce are also switching invariant. A signature $\sigma$ is said to be \emph{balanced} if it is switching equivalent to the trivial signature $\sigma_{\mathrm{triv}}:E^{or}\to id\in H$. In fact, the constants $\{h_k^\sigma, k=1,2,\ldots, N\}$ are quantifying the connectivity of the graph and the unbalancedness of the signature $\sigma$. The latter is described by the \emph{frustration index} $\iota^\sigma(S)$ of the signature $\sigma$ restricted to the induced subgraph of $S\subseteq V$, with the property that
$$\iota^\sigma(S)=0\,\,\Leftrightarrow\,\,\sigma \,\,\text{restricted on $S$ is balanced}.$$
By abuse of notation, we will also use $S$ to denote its induced subgraph. Denote by $|E(S, V\setminus S)|$ the number of edges connecting $S$ and its complement $V\setminus S$. We then define
\begin{equation}
\phi^\sigma(S):=\frac{\iota^\sigma(S)+|E(S,V\setminus S)|}{D\cdot |S|},
\end{equation}
where $|S|$ is the cardinality of the set $S$. Then the Cheeger constants $h_k^\sigma$ is defined as
\begin{equation}\label{intro:Cheeger}
 h_k^\sigma:=\min_{\{S_i\}_{i=1}^k}\max_{1\leq i\leq k}\phi^\sigma(S_i),
\end{equation}
where the minimum is taken over all nonempty, pairwise disjoint subsets $\{S_i\}_{i=1}^k$ of the vertex set $V$. The above definition of Cheeger constants is a natural extension of the constants in \cite{AtayLiu14} and \cite{LLPP15}, where $H=O(1)$ and $U(1)$, respectively, and is closely related to the $O(d)$ frustration $\ell^1$ constant in \cite{BSS13} (see Remark \ref{remark:BSS} for a detailed explanation).

The nonnegativity of the discrete Ricci curvature $K_\infty(\sigma)$, or the \emph{curvature dimension inequality with a signature}, $CD^\sigma(0,\infty)$, is an extension of the classical curvature dimension inequality \`{a} la Bakry and \'{E}mery \cite{BaEm,Bakry} on graphs, which has been studied extensively in recent years, see, e.g., \cite{Schmuckenschlager98,LinYau,JostLiu14,ChungLinYau14,KKRT15,LiuPeyerimhoff14,HuaLin15}. For related notions of curvature dimension inequalities and their strong implications in establishing various Li-Yau inequalities for heat semigroups on graphs, we refer to \cite{BHLLMY13, Horn14,FM14,FM15}. The definition of $CD^\sigma(0,\infty)$ uses both the connection Laplacian $\Delta^\sigma$ and the graph Laplacian $\Delta$, capturing the structure of the graph (especially, its cycles) and the signature (especially, the signature of cycles) locally around each vertex (see Proposition \ref{pro:gamma2Matrix}). The curvature condition $CD^\sigma(0,\infty)$ can be characterized by properties of the classical heat semigroup $P_t:=e^{t\Delta}$ for functions and the heat semigroup $P_t^\sigma:=e^{t\Delta^\sigma}$ for vector fields (vector valued functions) of the underlying graph (see Theorem \ref{thm:curvature-characterization}). Another appealing feature of this curvature notion is that it can be calculated very efficiently. Indeed, calculating this curvature is equivalent to solving semidefinite programming problems.

Our main theorem is the following result.
\begin{thm}[Higher order Buser inequalities]\label{thm:introMain}
Assume that a graph $G$ with a signature $\sigma$ satisfies $CD^\sigma(0,\infty)$. Then for each natural number $1\leq k\leq N$, we have
\begin{equation}
 \lambda_{kd}^\sigma\leq 16D(kd)^2\log(2kd)(h_k^\sigma)^2.
\end{equation}
\end{thm}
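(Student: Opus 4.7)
My strategy follows the now-standard heat-semigroup approach to higher order Buser inequalities (as in Klartag--Kozma--Ralli--Tetali for graphs and Liu--Peyerimhoff for the signless case), suitably vectorized to accommodate the $d$-dimensional fiber. First I would pick pairwise disjoint subsets $S_1,\ldots,S_k\subseteq V$ realizing the minimum in the definition \eqref{intro:Cheeger} of $h_k^\sigma$, so that for each $i$
\[
 \iota^\sigma(S_i)+|E(S_i,V\setminus S_i)|\le D\,|S_i|\,h_k^\sigma.
\]
The smallness of $\iota^\sigma(S_i)$ means $\sigma$ restricted to $S_i$ is almost balanced, so there is a switching function $\tau_i:S_i\to H$ that trivializes $\sigma$ on all but at most $\iota^\sigma(S_i)$ edges inside $S_i$. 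Using this, I would build, on each $S_i$, a family of $d$ vector-valued indicator-type fields $f_{i,1},\ldots,f_{i,d}:V\to\mathbb{C}^d$ by setting $f_{i,j}(x):=\tau_i(x)e_j$ for $x\in S_i$ and $0$ otherwise, where $\{e_j\}$ is the standard basis of $\mathbb{C}^d$. These $kd$ fields are orthogonal because they have disjoint supports; moreover $\|\Delta^\sigma f_{i,j}\|$ is small in a quantitative sense controlled by $\phi^\sigma(S_i)\le h_k^\sigma$, since the only contributions to the gradient come from (a) at most $\iota^\sigma(S_i)$ internally ``frustrated'' edges and (b) the $|E(S_i,V\setminus S_i)|$ boundary edges.

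Next I would smooth each $f_{i,j}$ by the connection heat semigroup, defining $u_{i,j}:=P_t^\sigma f_{i,j}$ for a time $t>0$ to be chosen. The role of the hypothesis $CD^\sigma(0,\infty)$ is exactly to provide the key gradient estimate for $P_t^\sigma$ on vector fields (this is the content of Theorem \ref{thm:curvature-characterization} cited in the introduction): it gives an $L^2$ contraction of $\|\nabla P_t^\sigma f\|$ and, combined with the heat semigroup $P_t=e^{t\Delta}$ for scalars, a quantitative ``heat kernel propagation'' controlling how far $P_t^\sigma f_{i,j}$ has spread from $S_i$. In particular, one obtains:
\begin{itemize}
\item an upper bound of the form $\langle -\Delta^\sigma u_{i,j},u_{i,j}\rangle \le C\,\phi^\sigma(S_i)^2\|f_{i,j}\|^2/t$, capturing the Dirichlet energy decay,
\item a lower bound $\|u_{i,j}\|^2\ge c\,\|f_{i,j}\|^2$ provided $t$ is not too large, coming from the heat propagation estimate applied to $\|f_{i,j}\|$,
\item small $L^2$ mass outside a slight thickening of $S_i$, so that the $u_{i,j}$'s are \emph{almost} orthogonal across different $i$'s.
\end{itemize}

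With these estimates in hand, the final step is a min-max argument. Because the $f_{i,j}$'s are perfectly orthogonal and $P_t^\sigma$ is a bounded operator, the $kd$ functions $u_{i,j}$ span a $kd$-dimensional subspace $W\subset\ell^2(V;\mathbb{C}^d)$. The Dirichlet energy bound together with the $L^2$ lower bound gives a uniform Rayleigh quotient estimate on $W$ (after diagonalizing the Gram matrix to deal with the small cross terms between distinct $S_i$'s). Optimizing $t$ so as to balance the energy bound $\sim \phi^\sigma(S_i)^2/t$ against the necessary smoothing time produces the factor $t^{-1}\asymp D(kd)^2\log(2kd)(h_k^\sigma)^2$, the logarithmic factor arising precisely from the need to smooth long enough to apply the heat-kernel spread estimate to $kd$ sets simultaneously. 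Applying the variational characterization $\lambda_{kd}^\sigma\le\max_{u\in W\setminus\{0\}}\langle -\Delta^\sigma u,u\rangle/\|u\|^2$ then yields the claimed inequality.

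\textbf{Main obstacle.} The central difficulty is the vector-valued gradient estimate for $P_t^\sigma$ acting on fields with nontrivial monodromy on their supports. Unlike the scalar case, one cannot reduce to $|P_t^\sigma f|$ via a Kato-type inequality without losing essential information about the phase/orientation provided by $\tau_i$; this is exactly where the assumption $CD^\sigma(0,\infty)$, which couples $\Delta^\sigma$ with $\Delta$, is indispensable. A secondary (but routine) issue is controlling the deviation of the Gram matrix of $\{u_{i,j}\}$ from the identity after smoothing, in order to convert the ``disjoint-supports'' orthogonality of the $f_{i,j}$'s into the approximate orthogonality of the $u_{i,j}$'s that the min-max step requires.
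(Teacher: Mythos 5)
Your choice of test objects coincides with the paper's: the columns $\tau_i^l$ of the optimal switching functions, extended by zero, and the heat semigroup characterization of $CD^\sigma(0,\infty)$ is indeed the engine. From there, however, you take a min--max/test-subspace route that the paper does not take, and two of your key estimates do not hold as stated. First, the Dirichlet energy bound $\langle -\Delta^\sigma u_{i,j},u_{i,j}\rangle\le C\,\phi^\sigma(S_i)^2\|f_{i,j}\|^2/t$ is not what the curvature hypothesis delivers. What $CD^\sigma(0,\infty)$ gives (Theorem \ref{thm:curvature-characterization} combined with duality, i.e.\ Lemmas \ref{lemma:lonenorm} and \ref{lemma:sqrtGamma}) is the $L^1$ estimate $\Vert f-P_t^\sigma f\Vert_1\le \sqrt{2t}\,\Vert\sqrt{\Gamma^\sigma(f)}\Vert_1\lesssim \sqrt{Dt}\,\phi^\sigma(S_i)\mu(S_i)$, which via $\lambda e^{-2t\lambda}\le (2t)^{-1}(1-e^{-2t\lambda})$ yields an energy bound that is \emph{linear} in $\phi^\sigma(S_i)$ and of order $\phi^\sigma(S_i)/\sqrt{t}$; the square of the Cheeger constant emerges only after the final optimization in $t$, not before. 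Second, the ``small $L^2$ mass outside a slight thickening of $S_i$'' is unavailable: $CD^\sigma(0,\infty)$ provides no off-diagonal decay for $P_t^\sigma$ on a general graph, and the paper never uses spatial localization. The approximate orthogonality of the $u_{i,j}$ would have to come instead from $\langle u_{i,j},u_{i',j'}\rangle=\langle (P_{2t}^\sigma-\mathrm{Id})f_{i,j},f_{i',j'}\rangle$ together with the same $L^1$ bound, and keeping the Gram matrix well conditioned then forces $t\lesssim \bigl(D(kd)^2(h_k^\sigma)^2\bigr)^{-1}$ --- precisely the tradeoff you label ``routine'', and the place where the $k$- and $d$-dependence of the final constant is decided.

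The paper's proof avoids the Gram matrix entirely. It lower-bounds $\Vert \tau_i^l - P_t^\sigma\tau_i^l\Vert_1\ge \Vert\tau_i^l\Vert_2^2-\Vert P_{t/2}^\sigma\tau_i^l\Vert_2^2$ using $|\tau_i^l|=1$ on $S_i$, expands in the eigenbasis, and exploits the \emph{exact} orthonormality of the unsmoothed functions $\tau_i^l/\sqrt{\mu(S_i)}$: Bessel's inequality against the first $kd-1$ eigenfunctions shows that some $i_0$ carries low-frequency mass at most $1-\tfrac{1}{kd}$, which gives the scalar inequality $2\sqrt{D t}\,h_k^\sigma\ge \tfrac{1}{kd}-e^{-t\lambda_{kd}^\sigma}$; the choice $t=\log(2kd)/\lambda_{kd}^\sigma$ then produces the stated bound. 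To salvage your subspace argument you would need to replace the two unsupported estimates by the $L^1$-based ones indicated above and carry out the Gram matrix analysis explicitly; as written, the proposal has genuine gaps at both points.
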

Note that $\lambda_{kd}^\sigma$ should be considered as the maximal value of the group of eigenvalues $\{\lambda_{(k-1)d+1}^\sigma, \ldots, \lambda_{kd}^\sigma\}$. There are $N$ different groups of eigenvalues and $N$ Cheeger constants, correspondingly.

In 1982, Peter Buser \cite{Buser82} showed that the first nonzero eigenvalue of the Laplace-Beltrami operator on a closed Riemannian manifold is bounded from above by the square of the Cheeger constant, up to a constant involving Ricci curvature. The authors of \cite{KKRT15} extend an argument of Ledoux \cite{Ledoux04} to establish analogous Buser type estimates on graphs satisfying the classical curvature dimension inequality $CD(0,\infty)$. In fact, Theorem \ref{thm:introMain} reduces to their result (see (\ref{intro:KK}) below) up to a constant, when $k=2$, $d=1$, and the signature $\sigma$
is balanced.

Higher order Buser inequalities were first proved by Funano \cite{Funano2013} on Riemannian manifolds, and later improved in \cite{Liu14} on manifolds, and in \cite{LiuPeyerimhoff14} for graph Laplacians, via showing an eigenvalue ratio estimate. However, the method in \cite{Liu14,LiuPeyerimhoff14} does not extend to the connection Laplacian for a general signature $\sigma: E^{or}\to H=O(d)$ or $U(d)$, except for the very special case $O(1)$ (see Example \ref{example:section 7}). We discuss extensions of the methods in \cite{Liu14,LiuPeyerimhoff14} for $H=O(1)$ signatures in Section \ref{section:O(1)}. For general signatures, our proof neatly extends Ledoux's \cite{Ledoux04} argument for Buser's inequality and provides new ideas for establishing higher order Buser inequalities.

In the following sections, we explain the ingredients of Theorem \ref{thm:introMain} in more details.

\subsection{Motivation and a dual Buser inequality}

In this subsection, we briefly recall some known results about Cheeger and dual Cheeger constants of a graph $G$ and the eigenvalues of the graph Laplacian $\Delta$, which can be listed as below,
$$0=\lambda_1\leq \lambda_2\leq\cdots\leq \lambda_N\leq 2.$$
This will explain one motivation of Theorem \ref{thm:introMain} from the spectral theory of the graph Laplacian $\Delta$. Recall that $\Delta:=\frac{1}{D}A-\mathrm{I}_N$, where $A$ is the adjacency matrix of $G$, i.e.,  $\Delta$ can be viewed as the connection Laplacian with the trivial signature $\sigma_{\mathrm{triv}}:E^{or} \to 1\in O(1)$. The above $\lambda_i$'s are eigenvalue of the matrix $-\Delta$.

By the results in \cite{AtayLiu14}, we know that if we assign to $G$ the trivial $O(1)$ signature $\sigma_{\mathrm{triv}}:E^{or}\to 1\in O(1)$, then the constant $h_2^{\sigma_{\mathrm{triv}}}$ coincides with the classical Cheeger constant of $G$. If, instead, we assign to $G$ the signature $-\sigma_{\mathrm{triv}}: E^{or}\to -1\in O(1)$, then the constant $h_1^{-\sigma_{\mathrm{triv}}}$ reduces to the bipartiteness ratio of Trevisan \cite{Trevisan2012}, or to one minus the dual Cheeger constant of Bauer and Jost \cite{BJ}. For details, we refer to \cite{AtayLiu14}. In fact, we have the following relations between eigenvalues, Cheeger constants and structural properties of the underlying graph:
\begin{align*}
 \lambda_2=0\,\,&\Leftrightarrow\,\,h_2^{\sigma_{\mathrm{triv}}}=0\,\,\Leftrightarrow\,\, G\,\,\text{has at least two connected components};\\
2-\lambda_N=0\,\,&\Leftrightarrow\,\,h_1^{-\sigma_{\mathrm{triv}}}=0\,\,\Leftrightarrow\,\,G\,\,\text{has a bipartite connected component}.
\end{align*}
The Cheeger \cite{Dodziuk1984,AM1985,Alon1986} and dual Cheeger inequalities \cite{Trevisan2012,BJ} asserts that
\begin{equation*}
 \frac{(h_2^{\sigma_{\mathrm{triv}}})^2}{2}\leq \lambda_2\leq 2h_2^{\sigma_{\mathrm{triv}}} \,\,\,\,\,\text{and}\,\,\,\,\,\frac{(h_1^{-\sigma_{\mathrm{triv}}})^2}{2}\leq 2-\lambda_N\leq 2h_1^{-\sigma_{\mathrm{triv}}}.
\end{equation*}
For many purposes, it is very useful to have further relations between $\lambda_2$ ($2-\lambda_N$, resp.) and $h_2^{\sigma_{\mathrm{triv}}}$ ($h_1^{-\sigma_{\mathrm{triv}}}$, resp.). The authors of \cite{KKRT15} prove the following \emph{Buser inequality}: If $G$ satisfies the curvature dimension inequality $CD(0,\infty)$, then
\begin{equation}\label{intro:KK}
 \lambda_2\leq 16D (h_2^{\sigma_{\mathrm{triv}}})^2.
\end{equation}
The inequality $CD(0,\infty)$ is defined solely by the graph Laplacian $\Delta$: For any two functions $f,g:V\to \mathbb{R}$, we define two operators $\Gamma$ and $\Gamma_2$ as follows:
\begin{align}
&2\Gamma(f,g):=\Delta(fg)-f\Delta g-(\Delta f)g,\label{intr:Gamma}\\
&2\Gamma_2(f,g):=\Delta(\Gamma(f,g))-\Gamma(f,\Delta g)-\Gamma(\Delta f,g).\label{intr:Gamma2}
\end{align}
The graph $G$ \emph{satisfies $CD(0,\infty)$} if we have for any function $f:V\to \mathbb{R}$,
\begin{equation}
 \Gamma_2(f,f)\geq 0.
\end{equation}

In particular, every cycle graph $\mathcal{C}_N$ with $N$ vertices satisfies $CD(0,\infty)$. Moreover, we have for the graph $\mathcal{C}_N$ (see, e.g., \cite[Proposition 7.4]{Liu13}),
\begin{equation}
  (h_2^{\sigma_{\mathrm{triv}}})^2\leq \lambda_2(\mathcal{C}_N)\leq 5(h_2^{\sigma_{\mathrm{triv}}})^2,
\end{equation}
which is in line with the Cheeger inequality and Buser inequality, and also
\begin{equation}\label{intr:cycleDual}
0.3(h_1^{-\sigma_{\mathrm{triv}}})^2\leq2-\lambda_N(\mathcal{C}_N)\leq 5(h_1^{-\sigma_{\mathrm{triv}}})^2.
\end{equation}

A natural question then arises: Is there any similar generalization of the right hand side of (\ref{intr:cycleDual})? That is, we are asking for a possible \emph{dual Buser inequality} for the graph Laplacian $\Delta$.

Observe that the first eigenvalue of the connection Laplacian $\Delta^{-\sigma_{\mathrm{triv}}}$, also known as the signless Laplacian \cite{DS09},
is equal to $2-\lambda_N$. Indeed, one check that
$$-\Delta^{-\sigma_{\mathrm{triv}}}=2I_N-(-\Delta)=I_N+\frac{1}{D}A.$$
Therefore, Theorem \ref{thm:introMain} implies the following dual Buser inequality for $\Delta$.
\begin{coro}[Dual Buser inequality]
 Assume that $G$ satisfies $CD^{-\sigma_{\mathrm{triv}}}(0,\infty)$. Then we have
\begin{equation}
2-\lambda_N\leq 16(\log 2) D (h_1^{-\sigma_{\mathrm{triv}}})^2.
\end{equation}
\end{coro}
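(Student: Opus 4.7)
The plan is to deduce the dual Buser inequality as a direct specialization of Theorem \ref{thm:introMain}, applied to the signature $\sigma = -\sigma_{\mathrm{triv}}$. Since $-\sigma_{\mathrm{triv}}$ takes values in $O(1) = \{\pm 1\}$ we have $d = 1$, and the curvature hypothesis in the corollary is exactly the one required by the theorem.

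I would choose $k = 1$ in Theorem \ref{thm:introMain} applied to the signature $-\sigma_{\mathrm{triv}}$. Substituting $k = d = 1$ gives
\begin{equation*}
\lambda_1^{-\sigma_{\mathrm{triv}}} \leq 16 D (\log 2) (h_1^{-\sigma_{\mathrm{triv}}})^2.
\end{equation*}

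Next, I would replace $\lambda_1^{-\sigma_{\mathrm{triv}}}$ by $2 - \lambda_N$ using the identity
\begin{equation*}
-\Delta^{-\sigma_{\mathrm{triv}}} = \mathrm{I}_N + \frac{1}{D}A = 2\mathrm{I}_N - (-\Delta)
\end{equation*}
stated just before the corollary. The affine map $\mu \mapsto 2 - \mu$ sends the spectrum of $-\Delta$ to the spectrum of $-\Delta^{-\sigma_{\mathrm{triv}}}$ while reversing its order, so the smallest eigenvalue on the connection-Laplacian side equals $2$ minus the largest eigenvalue on the ordinary-Laplacian side; that is, $\lambda_1^{-\sigma_{\mathrm{triv}}} = 2 - \lambda_N$.

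There is essentially no obstacle to overcome here: all the substantive analytic work has already been carried out inside Theorem \ref{thm:introMain}. The only point to handle carefully is the order-reversing correspondence between the eigenvalues of $-\Delta^{-\sigma_{\mathrm{triv}}}$ and those of $-\Delta$, so that one checks that the index $k = 1$ on the connection side indeed corresponds to the top eigenvalue $\lambda_N$ of the ordinary Laplacian. Combining the two steps above yields the stated bound $2 - \lambda_N \leq 16(\log 2) D (h_1^{-\sigma_{\mathrm{triv}}})^2$.
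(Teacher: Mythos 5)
Your proposal is correct and follows exactly the paper's route: the paper likewise obtains the corollary by noting $-\Delta^{-\sigma_{\mathrm{triv}}}=2\mathrm{I}_N-(-\Delta)$, so that $\lambda_1^{-\sigma_{\mathrm{triv}}}=2-\lambda_N$, and then specializing Theorem \ref{thm:introMain} to $k=d=1$ with the signature $-\sigma_{\mathrm{triv}}$, which yields the constant $16(\log 2)D$.
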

This provides a "dual" version of the Buser inequality in (\ref{intro:KK}). We like to mention that every cycle graph $\mathcal{C}_N$ also fulfills the inequality $CD^{-\sigma_{\mathrm{triv}}}(0,\infty)$.

One may guess that the inequality $CD^{-\sigma_{\mathrm{triv}}}(0,\infty)$ is defined by replacing the Laplacian $\Delta$ in (\ref{intr:Gamma}) and (\ref{intr:Gamma2}) by $\Delta^{-\sigma_{\mathrm{triv}}}$. However, this does not work. The reason is that the corresponding heat semigroup $P_t^{-\sigma_{\mathrm{triv}}}:=e^{t\Delta^{-\sigma_{\mathrm{triv}}}}$ does not possess a probability kernel (the operator $P_t^{-\sigma_{\mathrm{triv}}}$ is not even nonnegative), a property which is essential for the proofs in \cite{Ledoux04,KKRT15}. In fact, our definition of $CD^{-\sigma_{\mathrm{triv}}}(0,\infty)$ involves both matrices $\Delta$ and $\Delta^{-\sigma_{\mathrm{triv}}}$, which will be explained in the next section.

\subsection{Curvature dimension inequalities with signatures}

It actually looks more natural to define the curvature dimension inequality with a signature using both matrices $\Delta$ and $\Delta^\sigma$  when we come back to the general setting: For a $d$-dimensional signature $\sigma$, the connection Laplacian $\Delta^\sigma$, as an operator, acts on vector fields, i.e. functions $f: V\to \mathbb{K}^d$, where $\mathbb{K}=\mathbb{R}$ or $\mathbb{C}$.

\begin{definition}\label{def:introGamma}
  For any two functions $f,g: V\rightarrow \mathbb{K}^d$, we define
  \begin{equation}\label{eq:introGamma}
    2\Gamma^\sigma(f,g):=\Delta(f^T\overline{g})-f^T(\overline{\Delta^\sigma g}) - (\Delta^\sigma f)^T\overline{g},
  \end{equation}
  and
  \begin{equation}\label{eq:introGammatwo}
    2\Gamma^{\sigma}_2(f,g):=\Delta\Gamma^\sigma(f,g)-\Gamma^\sigma(f, \Delta^\sigma g)-
    \Gamma^\sigma(\Delta^\sigma f, g).
  \end{equation}
\end{definition}

Note that $\Gamma^\sigma(f,g)$ and $\Gamma^{\sigma}_2(f,g)$ are $\mathbb{K}$-valued functions on $V$.
We also write $\Gamma^{\sigma}(f):=\Gamma^{\sigma}(f,f)$ and
$\Gamma^\sigma_2(f):=\Gamma^\sigma_2(f,f)$, for short.
In (\ref{eq:introGamma}) and (\ref{eq:introGammatwo}), we use the graph Laplacian whenever we deal with a $\mathbb{K}$-valued function, and we use the graph connection Laplacian whenever we deal with a $\mathbb{K}$-vector valued function.

\begin{definition}[$CD^\sigma(K,\infty)$ inequality]\label{def:introCDineq}
Let $K\in \mathbb{R}$. We say the graph $G$ with a signature $\sigma$ satisfies the curvature dimension inequality $CD^\sigma(K,\infty)$ if we have for any vector field $f:V\to \mathbb{K}^d$ and any vertex $x\in V$,
\begin{equation}\label{eq:introCD}
 \Gamma_2^\sigma(f)(x)\geq K\Gamma^\sigma(f)(x).
\end{equation}
The \emph{precise $\infty$-dimensional Ricci curvature lower bound} $K_\infty(\sigma)$ is defined as the largest constant $K$ such that (\ref{eq:introCD}) holds.
\end{definition}

In Section \ref{section:HeatChar}, we show that the above curvature condition $CD^\sigma(0,\infty)$ can be characterized in terms of the corresponding heat semigroups $P_t:=e^{t\Delta}$ and $P_t^\sigma=e^{t\Delta^\sigma}$ as follows:
\begin{equation*}
 CD^\sigma(K,\infty)\,\,\Leftrightarrow\,\,\Gamma^\sigma(P_t^\sigma f)\leq e^{-2Kt}P_t(\Gamma^\sigma(f)), \,\,\forall\,f:V\to\mathbb{K}^d,\,\,\forall\,t\geq 0.
\end{equation*}
This is very useful for the proof of Theorem \ref{thm:introMain}.

It turns out that every graph $G$ with a signature $\sigma$ satisfies $CD^\sigma(\frac{2}{D}-1,\infty)$ (see Corollary \ref{cor:lower curvature bound}). This is shown by considering the switching invariance of $CD^\sigma(K,\infty)$ and $CD^\sigma$ inequalities of covering graphs (see Sections \ref{section:switching inv} and \ref{section:covering}). In particular, every (unweighted) cycle graph with a signature $\sigma: E^{or}\to O(d)$~or~$U(d)$ satisfies $CD^\sigma(0,\infty)$.

Given a graph $G$ and a signature $\sigma$, the curvature $K_\infty(\sigma)$ can be computed very efficiently by reformulating the $CD^\sigma(K,\infty)$ inequality as linear matrix inequalities at local neighborhoods of all vertices (see Section \ref{section:LMI}). Computing the precise Ricci curvature lower bound $K_\infty(\sigma)$ is then equivalent to solving semidefinite programming problems, for which efficient solvers exist. In particular, we derive the precise formula of $K_\infty(\sigma)$ for a triangle ($3$-cycle) graph with $\sigma: E^{or}\to U(1)$ in Section \ref{subsection:triangle}.

Moreover, the class of graphs with signatures satisfying $CD^\sigma(0, \infty)$ inequalities is rich since this curvature property is preserved by taking Cartesian products: Given two graphs $G_i=(V_i, E_i)$, $i=1,2$ with signatures $\sigma_i:E_i^{or}\to H_i=O(d_i)$ or $U(d_i)$, $i=1,2$. Denote their Cartesian product graph by $G_1\times G_2=(V_1\times V_2, E_{12})$. Let us assign a signature $\widehat{\sigma}_{12}:E_{12}^{or}\to H_1\otimes H_2$ to $G_1\times G_2$ as follows:
\begin{align}
\widehat{\sigma}_{12, (x_1,y)(x_2,y)}&:=\sigma_{1,x_1x_2}\otimes \mathrm{I}_{d_2}, \,\,\text{ for any }\,\,(x_1,x_2)\in E_1^{or}, y\in V_2;\notag\\
\widehat{\sigma}_{12, (x,y_1)(x,y_2)}&:=\mathrm{I}_{d_1}\otimes\sigma_{2,y_1y_2}, \,\,\text{ for any }\,\,(y_1,y_2)\in E_2^{or}, x\in V_1.\notag
\end{align}
Then we have the following theorem (see Theorem \ref{thm:CartesianII} and Remark \ref{remark:vertex measure}).
\begin{thm}
Let $G_i,i=1,2$ with signatures $\sigma_i,i=1,2$ satisfy $CD^{\sigma_1}(K_1,\infty)$ and $CD^{\sigma_2}(K_2,\infty)$, respectively. Then the Cartesian product graph $G_1\times G_2$ with the signature $\widehat{\sigma}_{12}$ satisfies $CD^{\widehat{\sigma}_{12}}(\frac{1}{2}\min\{K_1, K_2\},\infty).$
\end{thm}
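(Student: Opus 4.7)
The plan is to use the heat semigroup characterization of $CD^\sigma(K,\infty)$ from Section~\ref{section:HeatChar}. With the vertex measure convention of Remark~\ref{remark:vertex measure}, the normalized Laplacians on $G_1\times G_2$ decompose additively as
\[
\Delta = \tfrac{1}{2}\bigl(\Delta^{[1]}+\Delta^{[2]}\bigr),\qquad
\Delta^{\widehat{\sigma}_{12}}= \tfrac{1}{2}\bigl(\Delta^{\sigma_1,[1]}+\Delta^{\sigma_2,[2]}\bigr),
\]
where $\Delta^{[i]}$ and $\Delta^{\sigma_i,[i]}$ denote the lifts of the $G_i$-operators acting on the $i$-th factor and pointwise in the other. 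The tensor-product form of $\widehat{\sigma}_{12}$ is crucial: $\sigma_1\otimes I_{d_2}$ and $I_{d_1}\otimes \sigma_2$ commute as matrix-valued parallel transports, so $\Delta^{\sigma_1,[1]}$ and $\Delta^{\sigma_2,[2]}$ commute as operators on vector fields $f\colon V_1\times V_2\to \mathbb{K}^{d_1 d_2}$. Consequently the heat semigroups factor:
\[
P^{\widehat{\sigma}_{12}}_t = P^{\sigma_1,[1]}_{t/2}\circ P^{\sigma_2,[2]}_{t/2},\qquad P_t = P^{[1]}_{t/2}\circ P^{[2]}_{t/2},
\]
and from Definition~\ref{def:introGamma} one reads off $\Gamma^{\widehat{\sigma}_{12}}(f)=\tfrac{1}{2}\Gamma^{\sigma_1,[1]}(f)+\tfrac{1}{2}\Gamma^{\sigma_2,[2]}(f)$.

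Next I would invoke two pointwise bounds. The first is the heat semigroup form of $CD^{\sigma_i}(K_i,\infty)$ applied in the $i$-th factor,
\[
\Gamma^{\sigma_i,[i]}\bigl(P_s^{\sigma_i,[i]} h\bigr) \leq e^{-2K_i s}\, P_s^{[i]}\,\Gamma^{\sigma_i,[i]}(h).
\]
The second is a \emph{commutation bound}: for $i\ne j$,
\[
\Gamma^{\sigma_i,[i]}\bigl(P_s^{\sigma_j,[j]} h\bigr) \leq P_s^{[j]}\,\Gamma^{\sigma_i,[i]}(h).
\]
To prove this I would expand both sides via $\Gamma^{\sigma_i,[i]}(g)(x_i,x_j)=\frac{1}{2D_i}\sum_{z_i\sim x_i}\|g(z_i,x_j)-(\sigma_i(x_i,z_i)\otimes I_{d_j})g(x_i,x_j)\|^2$. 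Writing $P_s^{\sigma_j,[j]}h$ as an average $\sum_y \pi_s(x_j,y)(I_{d_i}\otimes U_{x_jy}^{(s)})h(x_i,y)$ of unitary transports of the form $I\otimes U$, the identity $(\sigma_i\otimes I)(I\otimes U)=(I\otimes U)(\sigma_i\otimes I)$ lets me pull out the transport, and Jensen's inequality applied to the convex function $\|\cdot\|^2$ produces the desired bound.

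Chaining these two estimates along the factorization $P^{\widehat{\sigma}_{12}}_t = P^{\sigma_i,[i]}_{t/2}\circ P^{\sigma_j,[j]}_{t/2}$ and using commutativity of $P^{[1]}_{t/2}$ and $P^{[2]}_{t/2}$ yields
\[
\Gamma^{\sigma_i,[i]}\bigl(P^{\widehat{\sigma}_{12}}_t f\bigr)\leq e^{-K_i t}\, P_t\, \Gamma^{\sigma_i,[i]}(f),\qquad i=1,2.
\]
Summing with the weights $\tfrac{1}{2}$ and bounding each $e^{-K_i t}$ by $e^{-\min\{K_1,K_2\} t}$ gives
\[
\Gamma^{\widehat{\sigma}_{12}}\bigl(P^{\widehat{\sigma}_{12}}_t f\bigr)\leq e^{-\min\{K_1,K_2\} t}\, P_t\, \Gamma^{\widehat{\sigma}_{12}}(f),
\]
which, via the heat semigroup characterization, is exactly $CD^{\widehat{\sigma}_{12}}(\tfrac{1}{2}\min\{K_1,K_2\},\infty)$.

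The main obstacle is the commutation bound in the second step; this is the only point where the tensor-product form of $\widehat{\sigma}_{12}$ is essential, and without it the proof collapses, since generic signatures on the product would entangle the $G_1$- and $G_2$-twists and destroy the factorization of the semigroup. A direct $\Gamma_2$-expansion is a viable fallback: one writes $2\Gamma_2^{\widehat{\sigma}_{12}}(f)=\tfrac{1}{2}\Gamma_2^{\sigma_1,[1]}(f)+\tfrac{1}{2}\Gamma_2^{\sigma_2,[2]}(f)+R$, where the cross term $R$ collects the four expressions $\tfrac{1}{4}\Delta^{[i]}\Gamma^{\sigma_j,[j]}(f)-\tfrac{1}{2}\Gamma^{\sigma_j,[j]}(f,\Delta^{\sigma_i,[i]} f)$ with $i\ne j$, and uses the same tensor-product commutation to check pointwise that $R\geq 0$ by a sum-of-squares identity; the individual $CD^{\sigma_i}(K_i,\infty)$ then gives the claimed bound with the $\tfrac{1}{2}$ arising from the vertex-measure weights $\tfrac{1}{4}+\tfrac{1}{4}=\tfrac{1}{2}$.
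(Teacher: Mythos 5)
Your proof is correct, but your main route is genuinely different from the one in the paper, while your ``fallback'' at the end is essentially the paper's actual argument. The paper (Theorems \ref{thm:CartesianI}, \ref{thm:CartesianII} and \ref{thm:CartesianIII}) works entirely at the level of $\Gamma_2$: it establishes the splittings $\Delta^{\sigma_{12}}f=\tfrac12\Delta^{\sigma_1}f_y+\tfrac12\Delta^{\sigma_2}f^x$ and $\Gamma^{\sigma_{12}}(f)=\tfrac12\Gamma^{\sigma_1}(f_y)+\tfrac12\Gamma^{\sigma_2}(f^x)$, and then proves $\Gamma_2^{\sigma_{12}}(f)\geq\tfrac14\Gamma_2^{\sigma_1}(f_y)+\tfrac14\Gamma_2^{\sigma_2}(f^x)$ by computing the cross terms explicitly and recognizing them as the sum of squares $\sum_{x_i,y_k}w_{1,i}w_{2,k}|\sigma_{2,k}\sigma_{1,i}f(x_i,y_k)-\sigma_{2,k}f(x,y_k)-\sigma_{1,i}f(x_i,y)+f(x,y)|^2\ge 0$, where commutativity of the (tensorized) signature groups is exactly what makes this a perfect square; the factor $\tfrac12$ in the curvature then comes from the measure normalization. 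Your semigroup route instead factors $P_t^{\widehat\sigma_{12}}=P^{\sigma_1,[1]}_{t/2}P^{\sigma_2,[2]}_{t/2}$, applies the gradient-commutation form of Theorem \ref{thm:curvature-characterization} in each factor, and closes the argument with a Jensen-type bound $\Gamma^{\sigma_i,[i]}(P_s^{\sigma_j,[j]}h)\le P_s^{[j]}\Gamma^{\sigma_i,[i]}(h)$; here the loss $e^{-2K_i(t/2)}=e^{-K_it}$ is what produces the $\tfrac12$. What your approach buys is conceptual transparency (the $\tfrac12$ is visibly the cost of running each factor's semigroup for only half the time) and it avoids the somewhat heavy explicit $\Gamma_2$ expansion; what the paper's approach buys is that it works for finite dimension parameters $n_1,n_2$ (yielding $CD(K_1\wedge K_2,n_1+n_2)$, which a pure $\Gamma$-semigroup argument does not see) and it needs no positivity or kernel-representation input.

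Two technical points you should tighten. First, your commutation bound is correct but the formula $P_s^{\sigma_j,[j]}h(x_i,x_j)=\sum_y\pi_s(x_j,y)(I_{d_i}\otimes U^{(s)}_{x_jy})h(x_i,y)$ is not literally right: a single block $(e^{s\Delta^{\sigma_j}})_{x_jy}$ is in general a nonnegative combination of \emph{several} distinct unitaries (one per path from $x_j$ to $y$, obtained by expanding $e^{s(\Delta^{\sigma_j}+C)}$ with $C$ large enough), not a nonnegative scalar times one unitary. The Jensen argument goes through verbatim once you sum over paths rather than over endpoints, using that the total mass of the path weights is $1$ because the unsigned kernel $P_s$ is stochastic (Proposition \ref{pro:heatsemigroupunsigned}). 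Second, the chaining step silently uses that $P_{t/2}^{[i]}$ preserves pointwise inequalities between real functions; this is again Proposition \ref{pro:heatsemigroupunsigned} and should be cited. With these repairs your argument is complete and gives exactly the stated constant.
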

In Appendix \ref{section:appendixCurCheeCar}, we discuss similar behavior of the curvature dimension inequality on the Cartesian product $G_1\times G_2$ when we assign to it various choices of edge weights, vertex measures and signatures.

\subsection{Frustration index via spanning trees}

The frustration index $\iota^\sigma(S)$, measuring the discrepancy of the signature $\sigma$ from being balanced when restricted to $S$, is an important ingredient for the definition of Cheeger type constants (\ref{intro:Cheeger}). In particular, for an $U(1)$ signature $\sigma:E^{or}\to U(1)$, it is defined as
\begin{equation}
 \iota^\sigma(S):=\min_{\tau:S\to U(1)}\sum_{\{x,y\}\in E_S}|\sigma_{xy}\tau(y)-\tau(x)|,
\end{equation}
where $E_S$ stands for the edges of the induced subgraph of $S$, and the minimum is taken over all switching functions on $S$. For higher dimensional signatures, we need to choose a matrix norm to define $\iota^\sigma(S)$, see Section \ref{section:CheegerSig}.

For $U(1)$ signatures, we show that there is an easier way to calculate its frustration index: We can calculate $\iota^\sigma(S)$ by taking the minimum only over a finite set of switching functions.  Let the induced subgraph of $S$ be connected. Given a spanning tree $T$ of $S$, we pick a switching function $\tau_T$ that switches the signature $\sigma$, restricted to $T$, to the trivial signature. Then we have
\begin{equation}\label{eq:introfrust}
 \iota^\sigma(S):=\min_{T\in \mathbb{T}_S}\sum_{\{x,y\}\in E_S}|\sigma_{xy}\tau_T(y)-\tau_T(x)|,
\end{equation}
where $\mathbb{T}_S$ is the set of all spanning trees of $S$, which is a finite set (see Section \ref{subsection:spanning tree}). This provides combinatorial expressions for $\iota^\sigma(S)$ and hence the Cheeger constants.

Surprisingly, such a simplified expression (\ref{eq:introfrust}) of $\iota^\sigma(S)$ becomes false for signatures with dimension $\geq 2$. We present a counterexample in Appendix \ref{section:appendixSpanningTree}.

Frustration indices and hence Cheeger constants behave well under taking Cartesian products as in the case of curvature dimension inequalities. This is discussed in Appendix \ref{section:appendixCurCheeCar}.

\subsection{Lichnerowicz inequality and the jump of the curvature}
Let $\lambda^\sigma$ be the first nonzero eigenvalue of the connection Laplacian $\Delta^\sigma$. Suppose the graph $G$ is connected. We observed that when $\sigma$ is unbalanced, $\lambda_1^\sigma\neq 0$, and hence $\lambda^\sigma=\lambda^\sigma_1$. Moreover, when $\iota^\sigma(V)$ becomes very small, i.e., when $\sigma$ is very close to be balanced, $\lambda^\sigma=\lambda^\sigma_1$ becomes very close to $0$. Once $\sigma$ becomes balanced, $\lambda_1^\sigma=0$, and $\lambda^\sigma=\lambda_2^\sigma>0$. We say that the quantity $\lambda^\sigma$ \emph{jumps} when $\sigma$ becomes balanced.

We show the following Lichnerowicz type eigenvalue estimate in Section \ref{section:Lichnerowicz}.
\begin{thm}[Lichnerowicz inequality]\label{thm:introLic} Assume that the graph $G$ with a signature $\sigma$ satisfies $CD^\sigma(K,\infty)$. Then the first nonzero eigenvalue $\lambda^\sigma$ satisfies
\begin{equation}
 \lambda^\sigma\geq K.
\end{equation}
\end{thm}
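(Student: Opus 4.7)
The plan is to adapt the classical Bakry--\'Emery Lichnerowicz argument to the graph connection setting. Let $f\colon V\to\mathbb{K}^d$ be an eigenvector with $\Delta^\sigma f=-\lambda^\sigma f$ for the first nonzero eigenvalue $\lambda^\sigma>0$, and set $|f|^2:=f^T\overline{f}$. The target is to sum the pointwise curvature bound $\Gamma_2^\sigma(f)\ge K\,\Gamma^\sigma(f)$ over the vertex set and reduce both sides to a common positive quantity proportional to $\sum_{x\in V}|f(x)|^2$.

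First I would use the eigenvalue identity to simplify $\Gamma^\sigma(f)$ and $\Gamma_2^\sigma(f)$. From Definition \ref{def:introGamma}, $\Gamma^\sigma$ is linear in the first slot and conjugate-linear in the second; since $\lambda^\sigma$ is real (by Hermiticity of $\Delta^\sigma$), one gets
\[
\Gamma^\sigma(f,\Delta^\sigma f)=\Gamma^\sigma(\Delta^\sigma f,f)=-\lambda^\sigma\,\Gamma^\sigma(f).
\]
Substituting $\Delta^\sigma f=-\lambda^\sigma f$ directly into (\ref{eq:introGamma}) and (\ref{eq:introGammatwo}) then yields the compact identities
\[
2\,\Gamma^\sigma(f)=\Delta(|f|^2)+2\lambda^\sigma|f|^2,\qquad 2\,\Gamma_2^\sigma(f)=\Delta\Gamma^\sigma(f)+2\lambda^\sigma\Gamma^\sigma(f).
\]

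Next I would sum both identities over the vertex set. Because $\sum_{x\in V}\Delta h(x)=0$ for every scalar $h$ on a $D$-regular graph (by the standard double-counting of edges), the Laplacian terms drop out and one obtains
\[
\sum_{x\in V}\Gamma^\sigma(f)(x)=\lambda^\sigma\sum_{x\in V}|f(x)|^2>0,\qquad \sum_{x\in V}\Gamma_2^\sigma(f)(x)=\lambda^\sigma\sum_{x\in V}\Gamma^\sigma(f)(x),
\]
where positivity uses $\lambda^\sigma>0$ and $f\not\equiv 0$. Summing the pointwise inequality from $CD^\sigma(K,\infty)$ and inserting these two identities gives $\lambda^\sigma\sum_x\Gamma^\sigma(f)(x)\ge K\sum_x\Gamma^\sigma(f)(x)$, and dividing by the positive sum produces $\lambda^\sigma\ge K$.

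The main bookkeeping point is to confirm that the sesquilinearity in (\ref{eq:introGamma}) interacts correctly with the reality of the eigenvalue so that both $\Gamma^\sigma(f,\Delta^\sigma f)$ and $\Gamma^\sigma(\Delta^\sigma f,f)$ reduce to $-\lambda^\sigma\Gamma^\sigma(f)$; once this is settled, the rest is a direct transcription of the scalar Bakry--\'Emery proof. In a non-regular or vertex-weighted setting, the same argument should go through verbatim after replacing $\sum_{x\in V}$ with the stationary measure for $\Delta$ and invoking self-adjointness of $\Delta$ with respect to that measure.
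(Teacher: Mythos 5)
Your proposal is correct and follows essentially the same route as the paper's proof of Theorem \ref{thm:lichnerowicz}: integrate the pointwise inequality $\Gamma_2^\sigma(f)\ge K\Gamma^\sigma(f)$ over the vertex set, use the vanishing of the summed Laplacian (respectively the summation-by-parts formula) to identify $\sum_x\Gamma_2^\sigma(f)(x)=\lambda^\sigma\sum_x\Gamma^\sigma(f)(x)$ and $\sum_x\Gamma^\sigma(f)(x)=\lambda^\sigma\|f\|^2>0$, and divide. Your closing remark about the general vertex-weighted case is exactly how the paper states and proves it, with $\mu$ in place of the counting measure.
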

For another Lichnerowicz type eigenvalue estimate for the eigenvalues $\lambda_2$ and $2-\lambda_N$ of the graph Laplacian $\Delta$ in terms of the coarse Ricci curvature bound due to Ollivier \cite{Ollivier09}, we refer to \cite{BJL12}.
An interesting application of Theorem \ref{thm:introLic} is the following: The jump phenomenon of the quantity $\lambda^\sigma$ imposes a similar jump phenomenon on the curvature.

Figure \ref{FtriangleEigenJump} illustrates the jumps of the first nonzero eigenvalue $\lambda^\sigma$ and the curvature $K_{\infty}(\sigma)$ of the particular example of a triangle graph $\mathcal{C}_3$ with $\sigma: E^{or}\to U(1)$, when $\sigma$ becomes balanced. In Figure \ref{FtriangleEigenJump}, the complex variable $s=Sgn(\mathcal{C}_3)\in U(1)$ is the signature of the triangle (see (\ref{eq:sigC}) for the definition). The signature $\sigma$ is balanced if and only if $\mathrm{Re(s)}=1$. See Section \ref{subsection:triangle} for details.

\begin{figure}[h]
\centering
\includegraphics[width=0.5\textwidth]{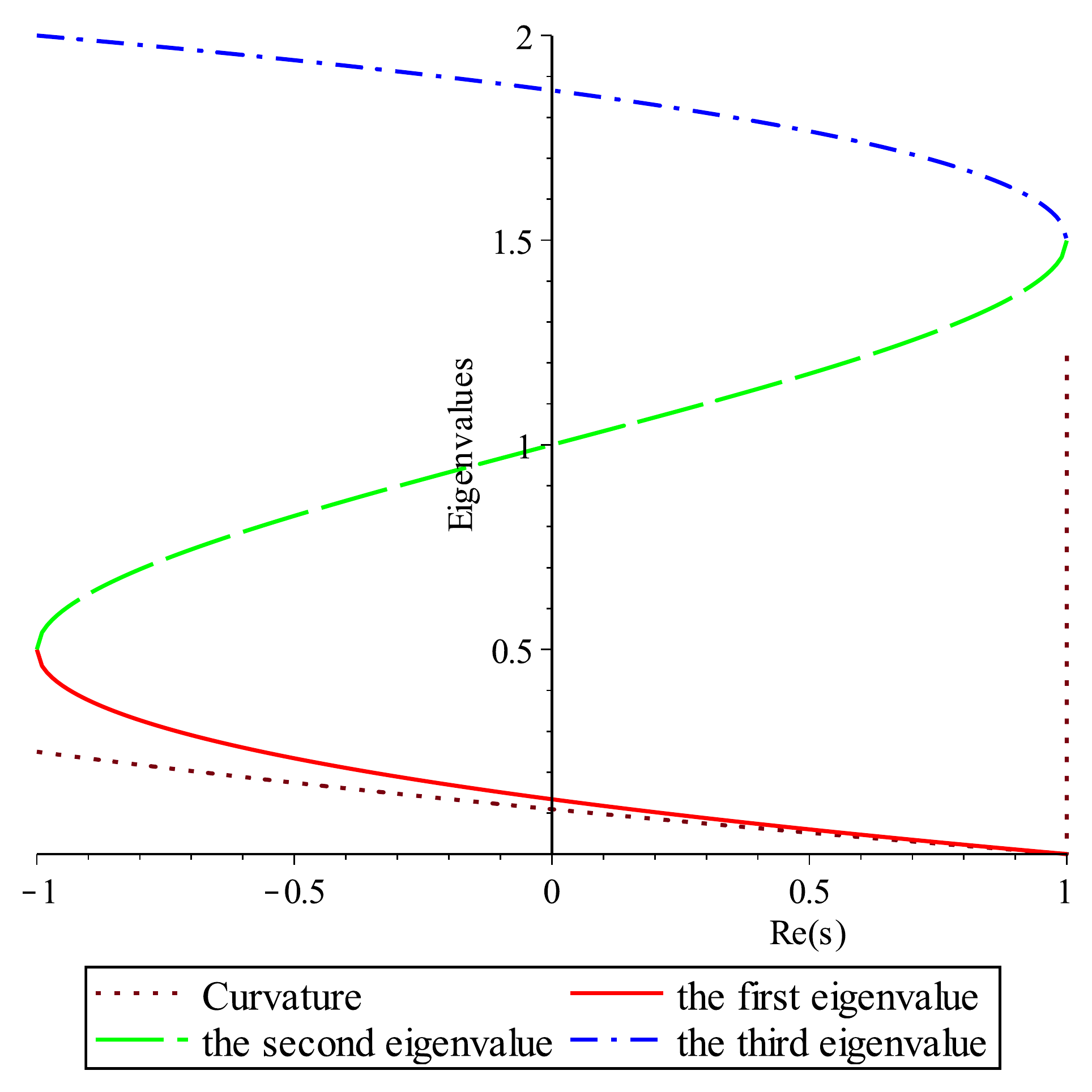}
\caption{Curvature and eigenvalues of a signed triangle\label{FtriangleEigenJump}}
\end{figure}

Moreover, Theorem \ref{thm:introLic} also establishes direct relations between Cheeger constants and the discrete Ricci curvature, see Section \ref{section:Lichnerowicz}.

\subsection{Organization of the paper}
In Section \ref{section:ConnLap}, we set up our general setting of a graph with edge weights, a general vertex measure and a signature, and discuss the associated connection Laplacian. In Section \ref{section:CurDimIneSig}, we discuss various basic properties of the curvature dimension inequalities with signatures and also their equivalent definitions. In Section \ref{section:MultCheeSig}, we introduce multi-way Cheeger constants with signatures and discuss some of the fundamental properties. Section \ref{section:BusIne} is devoted to the proof of our main result, that is, higher order Buser inequalities. In Section \ref{section:Lichnerowicz}, we prove a Lichnerowicz type eigenvalue estimate and discuss its applications. The special case of graphs with $O(1)$ signatures is treated in Section \ref{section:O(1)}, where an eigenvalue ratio estimate is obtained. In Appendix \ref{section:appendixCurCheeCar}, we provide a detailed discussion about the behavior on Cartesian product graphs of the two concepts, curvature dimension inequalities and Cheeger constants with signatures. Appendix \ref{section:appendixSpanningTree} contains a counterexample showing that a combinatorial expression of the frustration index via spanning trees, which we established in Section \ref{section:MultCheeSig} for graphs with $U(1)$ signatures, does no longer hold for $U(d)$ signatures with $d>1$.

\section{The Connection Laplacian}\label{section:ConnLap}

In this section, we introduce the basic setting of a graph with edge weights, a vertex measure and a signature, and the corresponding connection Laplacian.
\subsection{Basic setting} Throughout the paper, $G=(V, E, w)$ denotes an undirected weighted simple finite graph with vertex set $V$ and edge set $E$. If two vertices $x,y\in V$
are connected by an edge, we write $x\sim y$ and denote this edge by $\{x,y\}$. To each edge $\{x,y\}\in E$, we associate a positive symmetric weight $w_{xy}=w_{yx}$.  Let $$d_x:=\sum_{y,y\sim x}w_{xy}$$ be the (weighted) vertex degree of $x\in V$.


For the vertex set $V$, we assign a finite positive measure $\mu: V\to \mathbb{R}_{>0}$. The following two
quantities $D_G^{non}$ and $D_G^{nor}$ will appear naturally in our
arguments:
\begin{equation}\label{eq:degrees}
  D_G^{non}:=\max_{x\in V}\frac{d_x}{\mu(x)}, \,\,\text{ and }\,\, D_G^{nor}:=\max_{x\in V}\max_{y,y\sim x}\frac{\mu(x)}{w_{xy}}.
\end{equation}
Typically, one chooses $\mu(x)=1$ for all $x\in V$ ($\mu=\mathbf{1}_V$ for short), or $\mu(x)=d_x$ for all $x\in V$ ($\mu=\mathbf{d}_V$ for short). The superscripts in (\ref{eq:degrees}) are abbreviations for "nonnormalized" and "normalized", respectively. Observe that, $D_G^{non}=\max_{x\in V}d_x$ for the measure $\mu=\mathbf{1}_V$, while $D_G^{nor}=\max_{x\in V}d_x$ for the measure $\mu=\mathbf{d}_V$ and $w_{xy} = 1$ for all $\{x,y\} \in E$.

We write $(G, \mu, \sigma)$ to denote a graph $G=(V,E,w)$ with the vertex measure $\mu$ and the signature $\sigma:E^{or}\to H$ where $H$ is a group (recall (\ref{intro:signature})).

Recall from the Introduction that $\sigma$ is balanced if it is switching equivalent to the trivial signature $\sigma_{\mathrm{triv}}: E^{or}\to id\in H$. Actually, the original definition of balancedness of a signature by Harary \cite{Harary53} is defined via the signature of cycles of the underlying graph. Let $\mathcal{C}$ be a cycle of $G$, i.e., a subgraph composed of a sequence $(x_1,x_2), (x_2,x_3), \cdots, (x_{\ell-1},x_\ell),(x_\ell,x_1)$ of distinct edges. Then the signature $Sgn(\mathcal{C})$ of $\mathcal{C}$ is defined as the conjugacy class of the element
\begin{equation}\label{eq:sigC}
\sigma_{x_1x_2}\sigma_{x_2x_3}\cdots\sigma_{x_{\ell-1}x_\ell}\sigma_{x_\ell x_1}\in H.
\end{equation}
Note that, the signature of any cycle is switching invariant.
Harary \cite{Harary53} (see also \cite{Zaslavsky82}) defines a signature $\sigma:E^{or}\to H$ to be balanced if the
  signature of every cycle of $G$ is (the conjugacy class of the) identity
  element $id\in H$.
%
In fact, the above two definitions of balancedness of a signature are equivalent, see \cite[Corollary 3.3 and Section 9]{Zaslavsky82}.


For more historical background about signatures of graphs, we refer the reader to~\cite[Section 3]{LPV14}.

\subsection{Connection Laplacian}\label{subsection:Connection Laplacian}
Let $\mathbb{K}=\mathbb{R}$ or $\mathbb{C}$.
Throughout this paper, we restrict the group $H$ to be the orthogonal group $O(d)$ or the unitary group $U(d)$, of dimension $d$, $d\in \mathbb{Z}_{>0}$, when $\mathbb{K}=\mathbb{R}$ or $\mathbb{C}$, respectivly. For every edge $(x,y)\in E^{or}$, $\sigma_{xy}$ is a $(d\times d)$-orthogonal or unitary matrix and we have $\sigma_{yx}=\sigma_{xy}^{-1}=\overline{\sigma_{xy}^T}$.

For any vector-valued functions $f:V\rightarrow \mathbb{K}^d$ and any vertex $x\in V$,
the graph connection Laplacian $\Delta^\sigma$ is defined via
\begin{equation*}
\Delta^\sigma f(x):=\frac{1}{\mu(x)}\sum_{y,y\sim x}w_{xy}(\sigma_{xy}f(y)-f(x))\in \mathbb{K}^d.
\end{equation*}
Note that a function $f:V\rightarrow \mathbb{K}^d$ can also be considered as an $(Nd)$-dimensional column vector, which we denote by $\overrightarrow{f}\in \mathbb{K}^{Nd}$. This vector is well defined once we enumerate the vertices in $V$. The Laplacian can then be written as
\begin{equation}
 \Delta^\sigma=(\mathrm{diag}_\mu)^{-1}(A^\sigma-\mathrm{diag}_D),
\end{equation}
where $\mathrm{diag}_\mu$ and $\mathrm{diag}_D$ are $(Nd)\times (Nd)$-diagonal matrices with the diagonal blocks $(\mathrm{diag}_\mu)_{xx}=\mu(x)I_d$ and $(\mathrm{diag}_D)_{xx}=d_x\mathrm{I}_d$ for $x\in V$, respectively. Here we use $\mathrm{I}_d$ for a $(d\times d)$-identity matrix. The matrix $A^\sigma$ is defined blockwise as follows. For $x,y\in V$, the $(d\times d)$-block of it is given by
\begin{equation}
 (A^\sigma)_{xy}=\left\{
                   \begin{array}{ll}
                     0, & \hbox{if $\{x,y\}\not\in E$;} \\
                     w_{xy}\sigma_{xy}, & \hbox{$(x,y)\in E$.}
                   \end{array}
                 \right.
\end{equation}
Then we have $\overrightarrow{\Delta^\sigma f}=(\mathrm{diag}_\mu)^{-1}(A^\sigma-\mathrm{diag}_D)\overrightarrow{f}$.

If every edge has the trivial signature $1\in O(1)$, $\Delta^\sigma$ reduces to the graph Laplacian $\Delta$.
When $H=U(1)$, $\Delta^\sigma$ coincides with the discrete magnetic Laplacian \cite{Sunada93,Shubin94,LLPP15}.

Given two functions $f,g: V\to \mathbb{K}^d$, locally at a vertex $x$ the Hermitian inner product of $f(x)$ and $g(x)$ is given by $f(x)^T\overline{g(x)}$. The corresponding norm of $f(x)$ is denoted by $|f(x)|:=\sqrt{f^T(x)\overline{f(x)}}$. Globally, we have the following inner product between $f$ and $g$:
\begin{equation}\label{eq:inner product}
\langle f, g\rangle_{\mu}:=\sum_{x\in V}\mu(x)f(x)^T\overline{g(x)}.
\end{equation}
We denote by $\ell^2(V, \mathbb{K}^d; \mu)$ the corresponding Hilbert space of functions. The $\ell^2$ norm corresponding to (\ref{eq:inner product}) is denoted by $\Vert\cdot\Vert_{2,\mu}$. Note that $\Delta^{\sigma}$ is a self-adjoint operator on
$\ell^2(V, \mathbb{K}^d; \mu)$, i.e.,
\begin{equation}\label{eq:self-adjoint}
\langle \Delta^{\sigma}f, g\rangle_{\mu}=\langle f, \Delta^{\sigma}g\rangle_{\mu}.
\end{equation}

We call $\lambda^{\sigma}\in \mathbb{R}$ an eigenvalue of $\Delta^{\sigma}$ if there exists a non-zero function $f: V\to \mathbb{K}^d$ such that $\Delta^{\sigma}f=-\lambda^{\sigma}f$. In fact, all
$Nd$ eigenvalues of $\Delta^\sigma$ lie in the interval $[0,2D_G^{non}]$.

Let $\Sigma$ be the group generated by the elements of $\{\sigma_{xy}\mid (x,y)\in E^{or}\}$. We call $\Sigma$ the \emph{signature group} of the graph $(G,\sigma)$. If the action of $\Sigma$ on $\mathbb{K}^d$ is reducible, we have an orthogonal decomposition of $\mathbb{K}^d$, i.e.,
$$\mathbb{K}^d=U_1\oplus U_2\oplus\cdots\oplus U_r,\,\,\text{for some}\,\, r,$$
where the $U_i$'s are pairwise orthogonal w.r.t. the Hermitian inner product of $\mathbb{K}^d$ and each $U_i$ is an $\Sigma$-invariant subspace of $\mathbb{K}^d$ of dimension $d_i$ such that $\sum_{i=1}^rd_i=d$. Then there exist signatures $\sigma_i: E^{or}\to O(d_i)$ or $U(d_i),\,\,i=1,2,\ldots,r$, such that we can write
\begin{equation*}
\Delta^{\sigma}=\Delta^{\sigma_1}\oplus\Delta^{\sigma_2}\oplus\cdots\oplus \Delta^{\sigma_r},
\end{equation*}
by identifying each $U_i$ with the vector space $\mathbb{K}^{d_i}$.

\section{Curvature dimension inequalities with signatures}\label{section:CurDimIneSig}

In this section, we introduce the $CD^\sigma(K,n)$ inequality for $K\in \mathbb{R}$ and $n\in \mathbb{R}_+$ and discuss its basic properties. We will characterize the $CD^\sigma$ inequality in terms of linear matrix inequalities, and also in terms of heat semigroups for functions and vector fields.

\subsection{Definitions}
We start by discussing several basic properties of the operators $\Gamma^\sigma$ and $\Gamma^\sigma_2$ defined in Definition \ref{def:introGamma} (of course, we are using
the Laplacians in our current general setting). First, observe that they have the following Hermitian properties:
%
\begin{equation}\label{eq:gamma symmetric}
\Gamma^{\sigma}(f,g)=\overline{\Gamma^{\sigma}(g,f)}, \,\,\, \Gamma_2^{\sigma}(f,g)=\overline{\Gamma_2^{\sigma}(g,f)}, \,\,\,\forall f, g: V\to \mathbb{K}^d.
\end{equation}
Since the graph Laplacian $\Delta$ satisfies
\begin{equation}\label{eq:laplacian-integration}
\sum_{x\in V}\mu(x)\Delta(f^T\overline{g})(x)=0,
\end{equation}
the definition (\ref{eq:introGamma}) of $\Gamma^{\sigma}$ and the self-adjointness (\ref{eq:self-adjoint}) of $\Delta^{\sigma}$ lead to the following summation by part formula,
\begin{equation}\label{eq:summaBypart}
\sum_{x\in V}\mu(x)\Gamma^\sigma(f,g)(x)=-\langle f, \Delta^{\sigma}g\rangle_{\mu}=-\langle \Delta^{\sigma}f,g\rangle_{\mu}.
\end{equation}
Moreover, we have the following properties.
\begin{pro}\label{pro:Gammasigma}
For any  two functions $f,g: V\rightarrow \mathbb{K}^d$ and any $x\in V$, we have
\begin{enumerate}[(i)]
  \item $$\Gamma^\sigma(f,g)(x)=\frac{1}{2\mu(x)}\sum_{y,y\sim x}w_{xy}(\sigma_{xy}f(y)-f(x))^T(\overline{\sigma_{xy}g(y)-g(x)});$$
  \item $$\left|\Gamma^\sigma(f,g)(x)\right|\leq\sqrt{\Gamma^\sigma(f)(x)}\sqrt{\Gamma^\sigma(g)(x)}.$$
\end{enumerate}
\end{pro}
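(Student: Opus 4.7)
The plan is to prove (i) by direct computation from Definition \ref{def:introGamma}, and to derive (ii) as a pointwise Cauchy--Schwarz inequality using the representation from (i).

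For part (i), I would unfold each of the three summands defining $2\Gamma^\sigma(f,g)(x)$. Writing the graph Laplacian and the connection Laplacian at $x$ as sums over neighbors $y \sim x$, I would get
\begin{align*}
2\mu(x)\Gamma^\sigma(f,g)(x) = \sum_{y \sim x} w_{xy}\bigl[&f(y)^T\overline{g(y)} + f(x)^T\overline{g(x)} \\
&- f(x)^T\overline{\sigma_{xy}g(y)} - (\sigma_{xy}f(y))^T\overline{g(x)}\bigr],
\end{align*}
where the $-f(x)^T\overline{g(x)}$ contributions from $\Delta$, $\Delta^\sigma$ (applied to $g$), and $\Delta^\sigma$ (applied to $f$) combine to give a single $+f(x)^T\overline{g(x)}$ after the signs in (\ref{eq:introGamma}). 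Separately, I would expand the target quantity
\[
(\sigma_{xy}f(y)-f(x))^T\overline{(\sigma_{xy}g(y)-g(x))}
\]
into four terms. The key simplification is that the leading term $(\sigma_{xy}f(y))^T\overline{\sigma_{xy}g(y)} = f(y)^T \sigma_{xy}^T\overline{\sigma_{xy}}\,\overline{g(y)}$ reduces to $f(y)^T\overline{g(y)}$ because $\sigma_{xy} \in O(d)$ or $U(d)$ satisfies $\sigma_{xy}^T\overline{\sigma_{xy}} = \mathrm{I}_d$. The two expressions then match term by term, yielding the claimed formula.

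For part (ii), once (i) is in hand, I would interpret the sum as a Hermitian inner product on the finite-dimensional Hilbert space $\bigoplus_{y \sim x}\mathbb{K}^d$. Setting $a_y := \sqrt{w_{xy}}(\sigma_{xy}f(y)-f(x))$ and $b_y := \sqrt{w_{xy}}(\sigma_{xy}g(y)-g(x))$, part (i) reads $2\mu(x)\Gamma^\sigma(f,g)(x) = \sum_{y \sim x} a_y^T \overline{b_y}$, while $2\mu(x)\Gamma^\sigma(f)(x) = \sum_{y\sim x}|a_y|^2$ and analogously for $g$. The Cauchy--Schwarz inequality then gives
\[
\bigl|2\mu(x)\Gamma^\sigma(f,g)(x)\bigr| \le \sqrt{2\mu(x)\Gamma^\sigma(f)(x)}\,\sqrt{2\mu(x)\Gamma^\sigma(g)(x)},
\]
and dividing through by $2\mu(x) > 0$ yields the desired bound.

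I do not expect a genuine obstacle here: part (i) is a careful but routine bookkeeping exercise, with the one real input being that $\sigma_{xy}$ preserves the Hermitian inner product on $\mathbb{K}^d$; part (ii) is then immediate from Cauchy--Schwarz. The only point requiring a bit of care is keeping track of the complex conjugations and the transpose so that the cross terms in the expansion of $(\sigma_{xy}f(y)-f(x))^T\overline{(\sigma_{xy}g(y)-g(x))}$ match the cross terms produced by the three Laplacian contributions in (\ref{eq:introGamma}).
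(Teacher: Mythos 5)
Your proposal is correct and follows the same route as the paper, which simply states that (i) is a direct calculation and (ii) follows from (i) via Cauchy--Schwarz; your expansion of the three Laplacian terms, the use of $\sigma_{xy}^T\overline{\sigma_{xy}}=\mathrm{I}_d$, and the pointwise Cauchy--Schwarz on $\bigoplus_{y\sim x}\mathbb{K}^d$ all check out.
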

\begin{proof}
The formula (i) follows from a direct calculation. (ii) is a consequence of (i) by applying the Cauchy-Schwarz inequality.
\end{proof}

\begin{definition}[$CD^\sigma$ inequality]\label{defn:CDinequality}
  Let $K \in \mathbb {R}$ and $n \in  \R_+$. We say
  that $(G,\mu, \sigma)$ satisfies the $CD^\sigma$ inequality $CD^\sigma(K,n)$ if we have for any
  vector field $f: V\to \mathbb{K}^d$ and any vertex $x\in V$,
  \begin{equation}\label{eq:CDineq}
    \Gamma_2^\sigma(f)(x)\geq \frac{1}{n}\left| \Delta^\sigma f(x) \right|^2+K\Gamma^\sigma(f)(x).
  \end{equation}
  We call $K$ a lower curvature bound of $(G,\mu, \sigma)$ and $n$ a dimension parameter. We define the \emph{$n$-dimensional Ricci curvature} $K_n(G,\mu,\sigma;x)$ of $(G,\mu, \sigma)$ at the vertex $x\in V$ to be the largest $K$ that the inequality (\ref{eq:CDineq}) holds for a given dimension parameter $n$. We further define the \emph{precise $n$-dimensional Ricci curvature lower bound} $K_n(G,\mu,\sigma)$ of $(G,\mu, \sigma)$ as
\begin{equation}
K_n(G,\mu,\sigma):=\min_{x\in V} K_n(G,\mu,\sigma;x).
\end{equation}
We also simply write $K_n(\sigma;x)$ and $K_n(\sigma)$ when the setting $(G,\mu)$ is clear.
\end{definition}
Note that for given $K\in \mathbb{R}$, and $n_1, n_2\in\R_+$ with $n_1\leq n_2$, the inequality $CD^\sigma(K,n_1)$ implies $CD^\sigma(K,n_2)$. In other words, $CD^\sigma(K,n)$ provides a lower curvature bound $K$ and an upper dimension bound $n$ of the graph.

We also remark that rescalling the measure $\mu$ by a constant $c>0$ leads to
\begin{equation}
 K_n(G,c\mu,\sigma;x)=\frac{1}{c}K_n(G,\mu,\sigma;x).
\end{equation}
We will be particularly interested in graphs satisfying $CD^\sigma(K,\infty)$ in this paper.

The classical curvature-dimension inequality $CD(K,n)$ \`{a} la Bakry and \'{E}mery \cite{BaEm} on graphs is defined as follows: For any real-valued function $f:V\to \mathbb{R}$ and any vertex $x$, we have
 \begin{equation}\label{eq:CDineqOriginal}
    \Gamma_2(f)(x)\geq \frac{1}{n}\left| \Delta f(x) \right|^2+K\Gamma(f)(x).
  \end{equation}
Recall the definitions of $\Gamma$ and $\Gamma_2$ from (\ref{intr:Gamma}) and (\ref{intr:Gamma2}).

When $\sigma=\sigma_{\mathrm{triv}}: E^{or}\to id\in U(d)$ is the trivial signature, the graph $(G,\mu,\sigma)$ satisfies the inequality $CD^\sigma(K,n)$ if and only if $(G,\mu)$ satisfies the inequality $CD(K,n)$. In fact, this follows immediately from the following general result.
\begin{pro}\label{pro:CDdecomposition}
Assume that the action of the signature group $\Sigma$ of the graph $(G,\mu,\sigma)$ is decomposable, i.e., we have
\begin{equation*}
  \Delta^{\sigma}=\Delta^{\sigma_1}\oplus\Delta^{\sigma_2}\oplus\cdots\oplus \Delta^{\sigma_r},
\end{equation*}
where $\sigma_i:E^{or}\to U(d_i)\,\,\text{or}\,\,O(d_i), i=1,2,\ldots, r$.
Then the graph $(G, \mu, \sigma)$ satisfies the inequality $CD^{\sigma}(K,n)$ if and only if $(G, \mu, \sigma_i)$ satisfies $CD^{\sigma_i}(K,n)$ for each $i=1,2,\ldots,r$.
\end{pro}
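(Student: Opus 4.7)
The plan is to exploit the orthogonal decomposition $\mathbb{K}^d = U_1 \oplus \cdots \oplus U_r$ into $\Sigma$-invariant subspaces to show that $\Gamma^\sigma$, $\Gamma_2^\sigma$, and the pointwise squared norm $|\Delta^\sigma f|^2$ all split blockwise, after which the equivalence is a one-line check.

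First I would write any vector field $f: V \to \mathbb{K}^d$ uniquely as $f = f_1 + \cdots + f_r$ with $f_i(x) \in U_i \cong \mathbb{K}^{d_i}$. Since each $U_i$ is $\Sigma$-invariant, $\sigma_{xy} f_i(y)$ lies in $U_i$; combined with the orthogonality $U_i \perp U_j$ for $i \neq j$ in the Hermitian inner product on $\mathbb{K}^d$, the explicit formula in Proposition \ref{pro:Gammasigma}(i) yields
\[
\Gamma^\sigma(f,g)(x) = \sum_{i=1}^r \Gamma^{\sigma_i}(f_i, g_i)(x),
\]
because the cross terms $(\sigma_{xy}f_i(y) - f_i(x))^T \overline{(\sigma_{xy}g_j(y) - g_j(x))}$ vanish whenever $i \neq j$. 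The same orthogonality gives $|\Delta^\sigma f(x)|^2 = \sum_i |\Delta^{\sigma_i} f_i(x)|^2$.

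Next, I would plug this back into the definition (\ref{eq:introGammatwo}) of $\Gamma_2^\sigma$. By the preceding step, $\Gamma^\sigma(f,g)$ is a $\mathbb{K}$-valued function that has already split as a sum of contributions depending only on the $i$-th components, and the scalar graph Laplacian $\Delta$ acts linearly on this sum. Since by hypothesis $\Delta^\sigma f = \sum_i \Delta^{\sigma_i} f_i$ respects the decomposition, each of the three terms appearing in (\ref{eq:introGammatwo}) splits accordingly, giving
\[
\Gamma_2^\sigma(f,g)(x) = \sum_{i=1}^r \Gamma_2^{\sigma_i}(f_i, g_i)(x).
\]

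The equivalence now falls out. If each $(G,\mu,\sigma_i)$ satisfies $CD^{\sigma_i}(K,n)$, applying the respective inequalities to each $f_i$ and summing yields $CD^\sigma(K,n)$ for $f$. Conversely, given $CD^\sigma(K,n)$, testing the inequality on vector fields supported in a single block (i.e.\ $f_j \equiv 0$ for $j \neq i$) extracts $CD^{\sigma_i}(K,n)$. The main obstacle, such as it is, lies in the bookkeeping behind the blockwise splitting of $\Gamma_2^\sigma$: one must verify that all three compositions in (\ref{eq:introGammatwo})---namely $\Delta\Gamma^\sigma(f,g)$, $\Gamma^\sigma(f, \Delta^\sigma g)$, and $\Gamma^\sigma(\Delta^\sigma f, g)$---retain the $i$-block structure, which is guaranteed once one knows that $\Delta^\sigma$ preserves the decomposition and that $\Gamma^\sigma$ kills cross terms between distinct blocks.
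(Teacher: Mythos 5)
Your proposal is correct and follows essentially the same route as the paper: decompose $f$ into its components $f_i$ in the $\Sigma$-invariant orthogonal subspaces, observe that $\Gamma^\sigma$, $\Gamma_2^\sigma$, and $|\Delta^\sigma f|^2$ all split as sums over the blocks, and then obtain the equivalence by summing in one direction and testing on single-block vector fields in the other. Your write-up is in fact slightly more explicit than the paper's (which leaves the cross-term cancellation and the single-block test implicit), but the argument is the same.
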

\begin{proof}
By assumption, for any function $f: V\to \mathbb{K}^d$, there exist functions $f_i: V\to U_i\cong\mathbb{K}^{d_i}$, $i=1,2,\ldots,r$ such that $$f^T\overline{f}=f_1^T\overline{f_1}+f_2^T\overline{f_2}+\cdots+f_r^T\overline{f_r},$$ and,
$$  \Delta^{\sigma}f=\Delta^{\sigma_1}f_1\oplus\Delta^{\sigma_2}f_2\oplus\cdots\oplus \Delta^{\sigma_r}f_r.$$
Hence, for any $x\in V$, we obtain by Definition \ref{def:introGamma},
\begin{align*}
\Gamma^{\sigma}(f)(x)=\sum_{i=1}^r\Gamma^{\sigma_i}(f_i)(x),\,\,\text{ and }\,\,\Gamma_2^{\sigma}(f)(x)=\sum_{i=1}^r\Gamma_2^{\sigma_i}(f_i)(x).
\end{align*}
We also have $$\left| \Delta^{\sigma} f(x) \right|^2=\sum_{i=1}^r\left| \Delta^{\sigma_i} f_i(x) \right|^2.$$
Therefore, the inequality
$$\Gamma_2^\sigma(f)(x)\geq \frac{1}{n}\left| \Delta^\sigma f(x) \right|^2+K\Gamma^\sigma(f)(x),\,\,\forall x\in V,$$
is equivalent to the following inequality,
\begin{equation*}
\sum_{i=1}^r\Gamma_2^{\sigma_i}(f_i)(x)\geq \sum_{i=1}^r\left(\frac{1}{n}\left| \Delta^{\sigma_i} f_i(x) \right|^2+\Gamma^{\sigma_i}(f_i)(x)\right),
\end{equation*}
and the proposition follows immediately.
\end{proof}

Given a graph $(G,\mu,\sigma)$, where $\sigma:E^{or}\to O(d_1)$ or $U(d_1)$, we have a natural new signature
\begin{align*}
\sigma\otimes \mathrm{I}_{d_2}: E^{or}&\to O(d_1d_2) \,\,\text{ or }\,\,U(d_1d_2),\\
(x,y)&\mapsto \sigma_{xy}\otimes \mathrm{I}_{d_2},
\end{align*}
where $\mathrm{I}_{d_2}$ stands for the identity matrix of size $d_2\times d_2$. The following observation will be useful in our later discussion about the $CD^\sigma$ inequalities on Cartesian products of graphs in Appendix \ref{section:appendixCurCheeCar}.
\begin{coro}\label{cor:tensor}
A graph $(G, \mu, \sigma)$ satisfies $CD^{\sigma}(K, n)$ if and only if $(G, \mu, \sigma\otimes \mathrm{I}_{d_2})$ satisfies $CD^{\sigma\otimes \mathrm{I}_{d_2}}(K,n)$.
\end{coro}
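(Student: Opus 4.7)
The plan is to deduce this corollary directly from Proposition \ref{pro:CDdecomposition} by exhibiting an explicit orthogonal decomposition of $\mathbb{K}^{d_1 d_2}$ that reduces the signature $\sigma \otimes \mathrm{I}_{d_2}$ to $d_2$ identical copies of $\sigma$. The only real work is to verify that this decomposition is compatible with the action of the signature group generated by $\{\sigma_{xy}\otimes \mathrm{I}_{d_2} : (x,y) \in E^{or}\}$.

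Concretely, I would identify $\mathbb{K}^{d_1 d_2}$ with $\mathbb{K}^{d_1} \otimes \mathbb{K}^{d_2}$ equipped with the natural Hermitian inner product induced by the standard inner products on the factors. Let $\{e_1,\ldots,e_{d_2}\}$ denote the standard orthonormal basis of $\mathbb{K}^{d_2}$ and set
\begin{equation*}
U_j := \mathbb{K}^{d_1} \otimes \mathrm{span}(e_j), \qquad j=1,2,\ldots,d_2.
\end{equation*}
Then $\mathbb{K}^{d_1 d_2} = U_1 \oplus U_2 \oplus \cdots \oplus U_{d_2}$ is an orthogonal decomposition. For every oriented edge $(x,y) \in E^{or}$, the block $\sigma_{xy} \otimes \mathrm{I}_{d_2}$ acts on a simple tensor $v \otimes e_j$ by $(\sigma_{xy}v)\otimes e_j$, so each $U_j$ is invariant under the signature group of $\sigma \otimes \mathrm{I}_{d_2}$. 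Moreover, under the natural isometric identification $U_j \cong \mathbb{K}^{d_1}$, the restriction of $\sigma_{xy} \otimes \mathrm{I}_{d_2}$ to $U_j$ is exactly $\sigma_{xy}$; that is, the restricted signatures $\sigma_j$ all equal $\sigma$.

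Consequently, the action of the signature group of $\sigma \otimes \mathrm{I}_{d_2}$ is reducible with the decomposition described at the end of Section \ref{section:ConnLap}, and
\begin{equation*}
\Delta^{\sigma \otimes \mathrm{I}_{d_2}} = \Delta^{\sigma} \oplus \Delta^{\sigma} \oplus \cdots \oplus \Delta^{\sigma} \qquad (d_2 \text{ summands}).
\end{equation*}
Applying Proposition \ref{pro:CDdecomposition} to this decomposition, $(G,\mu,\sigma\otimes \mathrm{I}_{d_2})$ satisfies $CD^{\sigma \otimes \mathrm{I}_{d_2}}(K,n)$ if and only if $(G,\mu,\sigma)$ satisfies $CD^{\sigma}(K,n)$ on each of the $d_2$ isomorphic components, which is a single condition since all the components are identical.

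The only point requiring any care is the verification that the decomposition into the $U_j$ really is orthogonal with respect to the inner product used in Section \ref{subsection:Connection Laplacian} to define $\ell^2(V,\mathbb{K}^{d_1 d_2};\mu)$; this is immediate from the definition of the tensor product inner product. No other obstacle arises, and the main role of the argument is simply to set up the notation so that Proposition \ref{pro:CDdecomposition} applies verbatim.
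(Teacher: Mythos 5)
Your proof is correct and follows essentially the same route as the paper: both arguments observe that the signature group of $\sigma\otimes \mathrm{I}_{d_2}$ acts reducibly, yielding $\Delta^{\sigma\otimes \mathrm{I}_{d_2}}=\Delta^{\sigma}\oplus\cdots\oplus\Delta^{\sigma}$ ($d_2$ times), and then invoke Proposition \ref{pro:CDdecomposition}. You simply spell out the orthogonal invariant subspaces $U_j=\mathbb{K}^{d_1}\otimes\mathrm{span}(e_j)$ explicitly, which the paper leaves implicit.
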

\begin{proof}
We observe that the action of the signature group of $(G,\sigma\otimes \mathrm{I}_{d_2})$ on $\mathbb{K}^{d_1d_2}$ admits an orthogonal decomposition  and, therefore, we have
$$\Delta^{\sigma\otimes \mathrm{I}_{d_2}}=\underbrace{\Delta^{\sigma}\oplus\cdots\oplus\Delta^{\sigma}}_{d_2\,\, \text{times}}.$$
Corollary \ref{cor:tensor} is then a direct consequence of Proposition \ref{pro:CDdecomposition}.
\end{proof}


\subsection{Switching invariance}\label{section:switching inv}
The $CD^\sigma$ inequality is switching invariant.
\begin{pro}\label{pro:curvature-switching}
If $(G, \mu, \sigma)$ satisfies $CD^{\sigma}(K,n)$, then $(G, \mu, \sigma^{\tau})$ satisfies $CD^{\sigma^\tau}(K,n)$ for any switching function $\tau: V\to H$.
\end{pro}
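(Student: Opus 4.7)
The plan is to reduce the switched $CD^{\sigma^\tau}$ identity to the unswitched $CD^\sigma$ identity via the pointwise substitution $\tilde f(x) := \tau(x) f(x)$, which converts every $\sigma^\tau$-quantity evaluated on $f$ into the corresponding $\sigma$-quantity evaluated on $\tilde f$. Since $\tau(x)\in H\subseteq O(d)\cup U(d)$ is invertible at every vertex, the map $f\mapsto\tilde f$ is a bijection on the space of vector fields $V\to\mathbb{K}^d$, so it suffices to verify the three term-by-term identities
\[
\widetilde{\Delta^{\sigma^\tau}f}(x)=\Delta^\sigma\tilde f(x),\qquad \Gamma^{\sigma^\tau}(f,g)(x)=\Gamma^{\sigma}(\tilde f,\tilde g)(x),\qquad \Gamma_2^{\sigma^\tau}(f)(x)=\Gamma_2^{\sigma}(\tilde f)(x),
\]
and additionally $|\Delta^{\sigma^\tau}f(x)|^2=|\Delta^\sigma\tilde f(x)|^2$. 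Once these are in place, applying $CD^\sigma(K,n)$ to $\tilde f$ at $x$ translates directly into $CD^{\sigma^\tau}(K,n)$ for $f$ at $x$.

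For the first identity, I would substitute $\sigma^\tau_{xy}=\tau(x)^{-1}\sigma_{xy}\tau(y)$ into the defining formula of $\Delta^{\sigma^\tau}$ and factor $\tau(x)^{-1}$ out of the sum; this is exactly the block-conjugation $\Delta^{\sigma^\tau}=M_\tau^{-1}\Delta^\sigma M_\tau$ from (\ref{intro:unitary}) written pointwise. For the second identity I would use the local expression of Proposition~\ref{pro:Gammasigma}(i): each summand in $\Gamma^{\sigma^\tau}(f,g)(x)$ takes the form
\[
\bigl(\tau(x)^{-1}(\sigma_{xy}\tilde f(y)-\tilde f(x))\bigr)^T\,\overline{\bigl(\tau(x)^{-1}(\sigma_{xy}\tilde g(y)-\tilde g(x))\bigr)},
\]
and the factor $(\tau(x)^{-1})^T\overline{\tau(x)^{-1}}=\mathrm{I}_d$ because $\tau(x)$ is orthogonal/unitary; the same orthogonality gives $|\Delta^{\sigma^\tau}f(x)|^2=|\Delta^\sigma\tilde f(x)|^2$.

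For the third identity, the point is that $\Gamma^{\sigma^\tau}(f,g)$ and $\Gamma^\sigma(\tilde f,\tilde g)$ are equal as scalar functions on $V$, so applying the scalar Laplacian $\Delta$ to either yields the same result. Combining this with the first identity, the three terms in the definition of $2\Gamma_2^{\sigma^\tau}(f)$ become
\[
\Delta\Gamma^{\sigma^\tau}(f,f)=\Delta\Gamma^\sigma(\tilde f,\tilde f),\quad \Gamma^{\sigma^\tau}(f,\Delta^{\sigma^\tau}f)=\Gamma^\sigma(\tilde f,\Delta^\sigma\tilde f),\quad \Gamma^{\sigma^\tau}(\Delta^{\sigma^\tau}f,f)=\Gamma^\sigma(\Delta^\sigma\tilde f,\tilde f),
\]
which sum to $2\Gamma_2^\sigma(\tilde f)$. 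There is no real obstacle here; the whole argument is a routine bookkeeping exercise, and the only non-trivial input is the isometry property of $\tau(x)$ under the Hermitian inner product, which is just the defining property of the groups $O(d)$ and $U(d)$.
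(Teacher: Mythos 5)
Your proof is correct and follows essentially the same route as the paper: the paper's proof also verifies that $\Gamma^{\sigma^\tau}$ and $\Gamma_2^{\sigma^\tau}$ evaluated on $f$ coincide with $\Gamma^{\sigma}$ and $\Gamma_2^{\sigma}$ evaluated on the pointwise-rotated field (it writes the substitution as $\tau^{-1}f$ rather than your $\tau f$, an immaterial difference of direction since both are switching functions and the map is a bijection), using the unitary conjugation $\Delta^{\sigma^\tau}=M_\tau^{-1}\Delta^\sigma M_\tau$ and the isometry of each $\tau(x)$. Your write-up simply makes explicit the pointwise computations that the paper leaves as "we check".
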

\begin{proof}
Recalling (\ref{intro:unitary}), we check that we have for any $\tau: V\to H$ and $f, g: V\to \mathbb{K}^d$,
\begin{equation}\label{eq:gamma-switching}
\Gamma^{\sigma^{\tau}}(f,g)=\Gamma^{\sigma}(\tau^{-1}f, \tau^{-1}g) \,\,\text{ and }\,\, \Gamma_2^{\sigma^{\tau}}(f,g)=\Gamma_2^{\sigma}(\tau^{-1}f, \tau^{-1}g),
\end{equation}
using $\overline{\tau(x)^T}=\tau^{-1}(x)$.
The proposition then follows immediately from (\ref{intro:unitary}) and (\ref{eq:gamma-switching}).
\end{proof}
The arguments in the above proof show also that $K_n(G,\mu,\sigma;x)$, introduced in Definition \ref{defn:CDinequality}, is switching invariant for any given $n$.

We denote by $\mathrm{dist}$ the canonical graph distance and define the ball of radius $r$ centered at $x\in V$ by
\begin{equation*}
 B_r(x):=\{y\in V\mid \mathrm{dist}(x,y)\leq r\}.
\end{equation*}
\begin{pro}\label{pro:shortcycles}
 Let $(G,\mu,\sigma)$ be given. If the signature of every cycle of length $3$ or $4$ is equal to (the conjugate class of) $id\in H$, then $(G,\mu,\sigma)$ satisfies $CD^{\sigma}(K,n)$ if and only if $(G,\mu)$ satisfies $CD(K,n)$.
\end{pro}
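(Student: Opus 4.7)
The plan is to reduce $CD^\sigma(K,n)$ at each vertex $x$ to the scalar inequality $CD(K,n)$ at $x$ by constructing a local switching function that trivializes $\sigma$ on every edge relevant to the computation of $\Gamma_2^\sigma(f)(x)$. First I would inspect the formulas from Section~\ref{section:ConnLap} together with Proposition~\ref{pro:Gammasigma}(i) to confirm that $\Delta^\sigma f(x)$, $\Gamma^\sigma(f)(x)$, and hence $\Gamma_2^\sigma(f)(x)$ (via $\Delta\Gamma^\sigma(f)(x)$ and $\Gamma^\sigma(f,\Delta^\sigma f)(x)$) depend only on the values of $f$ on $B_2(x)$ and on the signatures $\sigma_{uv}$ of edges $\{u,v\}$ having at least one endpoint in $B_1(x)$.

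Fixing such an $x$ and writing $N(x) = B_1(x) \setminus \{x\}$, I would construct $\tau : V \to H$ (extended by the identity outside $B_2(x)$) so that $\sigma^\tau_{uv} = id$ for every such edge. Put $\tau(x) = id$, $\tau(y) = \sigma_{yx}$ for $y \in N(x)$, and, for each $z \in B_2(x) \setminus B_1(x)$, choose some $y \in N(x)$ with $y \sim z$ and set $\tau(z) = \sigma_{zy}\sigma_{yx}$. Trivialization of the edges incident to $x$ is then automatic. For an edge $\{y_1,y_2\}$ with $y_1,y_2 \in N(x)$ the switched signature equals $\sigma_{xy_1}\sigma_{y_1y_2}\sigma_{y_2x}$, namely the signature of the $3$-cycle $x \to y_1 \to y_2 \to x$, hence $id$ by hypothesis. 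For an edge $\{y,z\}$ with $y \in N(x)$ and $z$ at distance two, if $y$ was the chosen neighbor then $\sigma^\tau_{yz} = id$ by construction; otherwise $\sigma^\tau_{yz}$ is a cyclic rotation of the signature of the $4$-cycle $x \to y \to z \to y' \to x$ (where $y'$ is the chosen neighbor), again $id$ by hypothesis. The same computation shows that $\tau(z)$ is independent of the initial choice of $y$, so $\tau$ is well defined.

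With $\tau$ in place, Proposition~\ref{pro:curvature-switching} converts $CD^\sigma(K,n)$ at $x$ into $CD^{\sigma^\tau}(K,n)$ at $x$. Because $\sigma^\tau$ is the identity on every edge entering the local computation, the connection Laplacian acts componentwise there, and for $f = (f_1,\dots,f_d)$ one obtains
\[
\Gamma^{\sigma^\tau}(f)(x) = \sum_{i=1}^d \Gamma(f_i)(x), \quad |\Delta^{\sigma^\tau} f(x)|^2 = \sum_{i=1}^d |\Delta f_i(x)|^2, \quad \Gamma_2^{\sigma^\tau}(f)(x) = \sum_{i=1}^d \Gamma_2(f_i)(x),
\]
with the classical scalar operators applied componentwise (splitting into real and imaginary parts when $\mathbb{K} = \mathbb{C}$). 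Thus $CD^{\sigma^\tau}(K,n)$ at $x$ is a sum of $d$ scalar $CD(K,n)$ inequalities; testing against $f$ with only one nonzero component yields the scalar inequality at $x$, while summation gives the converse. Varying $x$ completes the proof. The main technical obstacle is the consistency of $\tau$ at vertices at distance two from $x$, which is precisely where the $4$-cycle hypothesis enters; the $3$-cycle hypothesis is needed for edges joining two neighbors of $x$.
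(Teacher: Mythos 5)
Your proof is correct and follows essentially the same route as the paper's: localize the $CD^\sigma$ inequality at $x$ to the ball $B_2(x)$, use the trivial signature of $3$- and $4$-cycles to switch the relevant edge signatures to the identity, and then invoke switching invariance (Proposition \ref{pro:curvature-switching}) together with the decomposition into scalar components (Proposition \ref{pro:CDdecomposition}). You simply spell out the explicit switching function and the well-definedness check that the paper leaves implicit, and you are in fact slightly more careful than the paper in observing that only edges with an endpoint in $B_1(x)$ (not all edges of the subgraph induced by $B_2(x)$, e.g.\ not those joining two vertices of $\mathrm{S}_2(x)$) need to be trivialized.
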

\begin{proof} Let $x\in V$ be a vertex.
Since all cycles of $3$ or $4$ have trivial signature, we can switch all the signatures of edges in the subgraph induced by the ball $B_2(x)$ to be trivial. Note that the inequality (\ref{eq:CDineq}) only involves the vertices in the ball $B_2(x)$. Then the proposition follows from Propositions \ref{pro:curvature-switching} and \ref{pro:CDdecomposition}.
\end{proof}

\subsection{Coverings and a general lower curvature bound}\label{section:covering}
Let $(\tilde{G}, \tilde{\mu}, \tilde{\sigma})$ and $(G, \mu, \sigma)$ be two graphs. Let $\pi: (\tilde{G}, \tilde{\mu}, \tilde{\sigma})\to (G, \mu, \sigma)$ be a graph homomorphism, namely, $\pi: \tilde{V}\to V$ is surjective, and if $\{\tilde{x}, \tilde{y}\}\in \tilde{E}$, then $\{\pi(\tilde{x}), \pi(\tilde{y})\}\in E$. Moreover, we require
\begin{equation}
\tilde{\sigma}_{\tilde{x}\tilde{y}}=\sigma_{\pi(\tilde{x})\pi(\tilde{y})}, \,\,\, \tilde{w}_{\tilde{x}\tilde{y}}=w_{\pi(\tilde{x}) \pi(\tilde{y})}\,\,\,\text{ and }\,\,\,\tilde{\mu}(\tilde{x})=\mu(\pi(\tilde{x})).
\end{equation}
 Such a map $\pi$ is called a \emph{covering map} if, furthermore, the subgraph of $\tilde{G}$ induced by the ball $B_1(\tilde{x})$ centered at each vertex $\tilde{x}\in \tilde{V}$ is mapped bijectively to the subgraph of $G$ induced by the ball $B_1(x)$. If a covering map $\pi: (\tilde{G}, \tilde{\mu}, \tilde{\sigma})\to (G, \mu, \sigma)$ exists, we call the graph $(\tilde{G}, \tilde{\mu}, \tilde{\sigma})$ a \emph{covering graph} of $(G, \mu, \sigma)$.
\begin{thm}\label{thm:covering}
Let $(\tilde{G}, \tilde{\mu}, \tilde{\sigma})$ be a covering graph of $(G, \mu, \sigma)$. If $(\tilde{G}, \tilde{\mu}, \tilde{\sigma})$ satisfies $CD^{\tilde{\sigma}}(K, n)$, then $(G, \mu, \sigma)$ satisfies $CD^{\sigma}(K,n)$.
\end{thm}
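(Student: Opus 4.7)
The plan is to exploit that the $CD^\sigma(K,n)$ inequality (\ref{eq:CDineq}) is a pointwise condition depending only on the $2$-ball around the tested vertex, and that a covering map is by definition a weight-, measure-, and signature-preserving local bijection. First I would fix an arbitrary vector field $f:V\to\mathbb{K}^d$ on the base and form its pullback $\tilde f := f\circ\pi : \tilde V \to \mathbb{K}^d$. The central claim to isolate is the three commutation identities
\begin{equation*}
\Delta^{\tilde\sigma}\tilde f = (\Delta^\sigma f)\circ\pi,\quad \Gamma^{\tilde\sigma}(\tilde f) = \Gamma^\sigma(f)\circ\pi,\quad \Gamma_2^{\tilde\sigma}(\tilde f) = \Gamma_2^\sigma(f)\circ\pi.
\end{equation*}
Granted these, if I pick any preimage $\tilde x \in \pi^{-1}(x)$ and apply $CD^{\tilde\sigma}(K,n)$ to $\tilde f$ at $\tilde x$, the resulting inequality reads verbatim as $CD^\sigma(K,n)$ for $f$ at $x$; surjectivity of $\pi$ then covers every $x \in V$.

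To verify the first identity, I would write out $\Delta^{\tilde\sigma}\tilde f(\tilde x)$ using the formula from Section \ref{subsection:Connection Laplacian}, substitute the defining compatibilities $\tilde\mu(\tilde x)=\mu(\pi(\tilde x))$, $\tilde w_{\tilde x\tilde y}=w_{\pi(\tilde x)\pi(\tilde y)}$, $\tilde\sigma_{\tilde x\tilde y}=\sigma_{\pi(\tilde x)\pi(\tilde y)}$, and then reindex the sum over $\tilde y\sim\tilde x$ as a sum over $y\sim \pi(\tilde x)$ using the bijection $\pi|_{B_1(\tilde x)}\colon B_1(\tilde x)\to B_1(\pi(\tilde x))$. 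Exactly the same substitution inside the explicit formula of Proposition \ref{pro:Gammasigma}(i) yields the identity for $\Gamma^{\tilde\sigma}(\tilde f)$. Applying the Laplacian identity to the trivial signature shows, in particular, that the scalar graph Laplacian $\Delta$ commutes with the lift of scalar functions as well.

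The identity for $\Gamma_2^{\tilde\sigma}$ is then formal: unfolding
\begin{equation*}
2\Gamma_2^{\tilde\sigma}(\tilde f) = \Delta\,\Gamma^{\tilde\sigma}(\tilde f) - \Gamma^{\tilde\sigma}(\tilde f,\Delta^{\tilde\sigma}\tilde f) - \Gamma^{\tilde\sigma}(\Delta^{\tilde\sigma}\tilde f,\tilde f)
\end{equation*}
from Definition \ref{def:introGamma}, each term on the right is either the scalar Laplacian applied to a scalar lift, or $\Gamma^{\tilde\sigma}$ of two vector-field lifts, both of which have already been shown to be lifts of the corresponding base quantities. The only point that requires care, and which I expect to be the main (though routine) bookkeeping hurdle, is that $\Gamma_2^{\tilde\sigma}(\tilde f)(\tilde x)$ depends on $\tilde f$ over the $2$-ball $B_2(\tilde x)$, so one has to invoke the local bijection property of $\pi$ not only at $\tilde x$ but also at each neighbour $\tilde y \sim \tilde x$ when evaluating $\Delta^{\tilde\sigma}\tilde f(\tilde y)$ and $\Gamma^{\tilde\sigma}(\tilde f)(\tilde y)$. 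Once this is in place, the theorem drops out immediately, and the same strategy shows (as an aside) that the precise curvature satisfies $K_n(G,\mu,\sigma;x)\geq K_n(\tilde G,\tilde\mu,\tilde\sigma;\tilde x)$ for every $\tilde x\in\pi^{-1}(x)$.
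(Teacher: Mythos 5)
Your proposal is correct and follows essentially the same route as the paper: pull back $f$ to $\tilde f=f\circ\pi$, verify that $\Delta^{\tilde\sigma}$, $\Gamma^{\tilde\sigma}$ and $\Gamma_2^{\tilde\sigma}$ of the lift agree at $\tilde x$ with the corresponding base quantities at $\pi(\tilde x)$, and then transfer the $CD^{\tilde\sigma}(K,n)$ inequality pointwise. The paper simply asserts these identities ``by definition of a covering map,'' whereas you spell out the bookkeeping (including the correct observation that the $\Gamma_2$ identity needs the local bijection at each neighbour of $\tilde x$ as well), so no gap.
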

\begin{proof}
For any function $f: V\to \mathbb{K}^d$, we can find a corresponding function $\tilde{f}: \tilde{V}\to \mathbb{K}^d$ such that
\begin{equation}
\tilde{f}(\tilde{x}):=f(\pi(\tilde{x}))\,\,\,\forall\,\, \tilde{x}\in \tilde{V},
\end{equation}
where $\pi$ is the covering map from $(\tilde{G}, \tilde{\mu}, \tilde{\sigma})$ to $(G, \mu, \sigma)$.

For any $x\in V$, and any $\tilde{x}\in \pi^{-1}(x)$, we can check by definition of a covering map that
\begin{equation}\label{eq:covering}
|\Delta^{\tilde{\sigma}}\tilde{f}(\tilde{x})|^2=\left|\Delta^{\sigma}f(x)\right|^2,\,\, \Gamma^{\tilde{\sigma}}(\tilde{f})(\tilde{x})=\Gamma^\sigma(f)(x),\,\, \Gamma_2^{\tilde{\sigma}}(\tilde{f})(\tilde{x})=\Gamma_2^\sigma(f)(x).
\end{equation}

Since $(\tilde{G}, \tilde{\mu}, \tilde{\sigma})$ satisfies $CD^{\tilde{\sigma}}(K,n)$, we obtain that for any $f: V\to \mathbb{K}^d$, and any vertex $x\in V$,
\begin{equation}
 \Gamma_2^{\tilde{\sigma}}(\tilde{f})(\tilde{x})\geq \frac{1}{n}|\Delta^{\tilde{\sigma}}\tilde{f}(\tilde{x})|^2+K\Gamma^{\tilde{\sigma}}(\tilde{f})(\tilde{x}).
\end{equation}
Combining this with (\ref{eq:covering}) completes the proof.
\end{proof}

\begin{coro}\label{cor:lower curvature bound}
Any graph $(G, \mu, \sigma)$ satisfies the inequality $$CD^{\sigma}\left(\frac{2}{D^{nor}_G}-D_G^{non},2\right).$$ In particular, any unweighted cycle graph with constant vertex measure $\mu\equiv\nu_0\cdot \mathbf{1}_V$ and any signature $\sigma: E^{or}\to H$ satisfies $$CD^{\sigma}(0,2).$$
\end{coro}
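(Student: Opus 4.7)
My plan is to deduce this from Theorem \ref{thm:covering} combined with the classical (unsigned) $CD$ estimate, by unfolding $(G,\mu,\sigma)$ to its universal graph cover. Let $\pi\colon\tilde G\to G$ be the universal covering of $G$ as an undirected graph, so that $\tilde G$ is a (possibly infinite) tree, and lift the data by $\tilde\mu:=\mu\circ\pi$, $\tilde w_{\tilde x\tilde y}:=w_{\pi(\tilde x)\pi(\tilde y)}$, and $\tilde\sigma_{\tilde x\tilde y}:=\sigma_{\pi(\tilde x)\pi(\tilde y)}$. Since $\pi$ is a local isomorphism on $1$-balls, we have $D^{non}_{\tilde G}=D^{non}_G$ and $D^{nor}_{\tilde G}=D^{nor}_G$.

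Since $\tilde G$ has no cycles at all, the hypothesis of Proposition \ref{pro:shortcycles} is vacuously satisfied on $(\tilde G,\tilde\mu,\tilde\sigma)$, so that proposition gives that $CD^{\tilde\sigma}(K,n)$ on $\tilde G$ is equivalent to the classical $CD(K,n)$ inequality for the unsigned graph $(\tilde G,\tilde\mu)$. I would then invoke the standard unsigned bound that any weighted graph with a vertex measure satisfies $CD\!\left(\tfrac{2}{D^{nor}}-D^{non},2\right)$; this is a pointwise consequence of the Cauchy--Schwarz inequality $|\Delta f(x)|^2\leq 2D^{non}\,\Gamma(f)(x)$ together with a direct expansion of $\Delta\Gamma(f)-2\Gamma(f,\Delta f)$ at $x$, whose ``diagonal'' contributions indexed by $y\sim x$ produce the constant $2/D^{nor}$. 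Combined, these two steps give $CD^{\tilde\sigma}\!\left(\tfrac{2}{D_G^{nor}}-D_G^{non},2\right)$ on $\tilde G$, and Theorem \ref{thm:covering} then transports this inequality down to $(G,\mu,\sigma)$.

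For the cycle graph $\mathcal C_N$ with unit weights and constant measure $\mu\equiv\nu_0\cdot\mathbf{1}_V$ we have $d_x\equiv 2$, hence $D_G^{non}=2/\nu_0$ and $D_G^{nor}=\nu_0$, so $\tfrac{2}{D_G^{nor}}-D_G^{non}=0$ and the corollary specializes to $CD^\sigma(0,2)$. The main point to verify is the appearance of the infinite covering $\tilde G$, since the paper's standing convention assumes finiteness. This is not a real obstruction: the $CD^\sigma$ inequality, the equivalence from Proposition \ref{pro:shortcycles}, and the proof of Theorem \ref{thm:covering} are all pointwise, $2$-local statements, so every step goes through verbatim for an infinite cover. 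Alternatively, one can replace the universal cover by a finite local unfolding of each ball $B_2(x)\subset G$ that removes every cycle of length $3$ or $4$, at the cost of some extra combinatorial bookkeeping.
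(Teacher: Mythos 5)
Your proposal is correct and follows essentially the same route as the paper: pass to the universal covering tree, use Proposition \ref{pro:shortcycles} to reduce $CD^{\tilde\sigma}$ to the classical $CD$ inequality there, invoke the unsigned bound $CD\bigl(\tfrac{2}{D^{nor}}-D^{non},2\bigr)$ (which the paper cites from Lin--Yau rather than re-deriving), and push the inequality back down via Theorem \ref{thm:covering}. Your explicit remark that the infinite cover causes no trouble because all statements involved are $2$-local is a point the paper leaves implicit, but it does not change the argument.
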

\begin{proof}
Let $(T_G, \tilde{\mu}, \tilde{\sigma})$ be the universal covering of $(G, \mu, \sigma)$, i.e., $T_G$ is a tree. It is shown in \cite[Theorem 1.2]{LinYau} (see also \cite[Theorem 8]{JostLiu14}) that $(T_G, \tilde{\mu})$ satisfies the $CD$ inequality
$$CD\left(\frac{2}{D^{nor}_{T_G}}-D^{non}_{T_G},2\right).$$
Due to Proposition \ref{pro:shortcycles}, we know that $(T_G, \tilde{\mu}, \tilde{\sigma})$ satisfies
$$CD^{\tilde{\sigma}}\left(\frac{2}{D^{nor}_{T_G}}-D^{non}_{T_G},2\right),$$ since a tree has no cycles.
By the definition of a covering graph, we have $D_G^{nor}=D_{T_G}^{nor}$ and $D_G^{non}=D_G^{non}$. Then the corollary follows directly from Theorem \ref{thm:covering}.
\end{proof}


\subsection{$CD^\sigma$ inequality as linear matrix inequalities}\label{section:LMI}
In this subsection, we present an equivalent formulation of the $CD^\sigma$ inequality via linear matrix inequalities. As a consequence, the problem of calculating the Ricci curvature of a graph is reduced to solving semidefinite programming problems. In this process, we explore the geometrical information captured by the $CD^\sigma$ inequality of a graph.

By Definition \ref{def:introGamma}, the operators $\Gamma^\sigma$ and $\Gamma^\sigma_2$ can be considered as two symmetric sesquilinear forms. Hence they can be represented by Hermitian matrices. For our purpose, we are interested in considering the two symmetric sesquilinear forms locally at every vertex $x\in V$. There exist two $(Nd)\times (Nd)$-Hermitian matrices $\Gamma^\sigma(x)$ and $\Gamma^\sigma_2(x)$ such that for any two functions $f,g:V\to \mathbb{K}^d$,
\begin{equation}
 \Gamma^\sigma(f,g)(x)=\overrightarrow{f}^T\Gamma^\sigma(x)\overline{\overrightarrow{g}}, \,\,\text{ and }\,\, \Gamma_2^\sigma(f,g)(x)=\overrightarrow{f}^T\Gamma_2^\sigma(x)\overline{\overrightarrow{g}}.
\end{equation}

Denote by $|B_r(x)|$ the cardinality of the set $B_r(x)$. Observe that the matrix $\Gamma^\sigma(x)$ only has a nontrivial block of size $|B_1(x)| \times |B_1(x)|$, while the matrix $\Gamma_2^\sigma$ only has a nontrivial block of size $|B_2(x)| \times |B_2(x)|$.

We denote by $\Delta^\sigma(x)$ the $(d\times Nd)$-matrix such that $\Delta^\sigma f(x)=\Delta^\sigma(x)\overrightarrow{f}$ for all functions $f: V\to \mathbb{K}^d$.
Given two Hermitian matrices $M_1$ and $M_2$, the inequality $M_1\geq M_2$ means that the matrix $M_1-M_2$ is positive semidefinite. Then we have the following equivalent definition of $CD^\sigma$ inequality.
\begin{definition}[$CD^\sigma$ inequality as linear matrix inequalities]
 Let $K\in \mathbb{R}$ and $n\in \mathbb{R}_+$. A graph $(G,\mu,\sigma)$ satisfies the $CD^\sigma$ inequality $CD^\sigma(K,n)$ if and only if, for any vertex $x\in V$, the following linear matrix inequality holds,
\begin{equation}
 \Gamma_2(x)\geq \frac{1}{n}\Delta^\sigma(x)^T\overline{\Delta^\sigma(x)}+K\Gamma^\sigma(x).
\end{equation}
\end{definition}
A direct consequence is the following proposition.
\begin{pro}[Semidefinite Programming]\label{pro:sdp}
The $n$-dimensional Ricci curvature $K_n(G,\mu,\sigma;x)$ of the the graph $(G,\mu,\sigma)$ at the vertex $x\in V$ is the solution of the following semidefinite programming,
\begin{align*}
 &\text{maximize}\,\,\, K\\
&\text{subject to}\,\,\,\Gamma_2^\sigma(x)-\frac{1}{n}\Delta^\sigma(x)^T\overline{\Delta^\sigma(x)}\geq K\Gamma^\sigma(x).
\end{align*}
\end{pro}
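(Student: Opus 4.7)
The proposition asserts that the precise $n$-dimensional Ricci curvature at $x$ is the optimum of an explicit semidefinite program; my plan is to derive this directly from the matrix reformulation of the $CD^\sigma$ inequality just introduced, so the work is really to justify that matrix reformulation and then read off the SDP statement.

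First I would spell out the bookkeeping that produces the Hermitian matrices $\Gamma^\sigma(x)$ and $\Gamma_2^\sigma(x)$. The operators $\Gamma^\sigma(\cdot,\cdot)(x)$ and $\Gamma_2^\sigma(\cdot,\cdot)(x)$ are evidently $\mathbb{K}$-sesquilinear forms on the finite-dimensional space of vector fields $f:V\to\mathbb{K}^d$, identified with $\mathbb{K}^{Nd}$ via $f\mapsto\overrightarrow{f}$. The Hermitian symmetry relations (3.1) then force the associated matrices $\Gamma^\sigma(x)$, $\Gamma_2^\sigma(x)$ to be Hermitian, and
\[
\Gamma^\sigma(f)(x)=\overrightarrow{f}^T\Gamma^\sigma(x)\overline{\overrightarrow{f}},\qquad \Gamma_2^\sigma(f)(x)=\overrightarrow{f}^T\Gamma_2^\sigma(x)\overline{\overrightarrow{f}}.
\]
Similarly, since $f\mapsto\Delta^\sigma f(x)$ is $\mathbb{K}$-linear from $\mathbb{K}^{Nd}$ to $\mathbb{K}^d$, there is a $(d\times Nd)$-matrix $\Delta^\sigma(x)$ with $\Delta^\sigma f(x)=\Delta^\sigma(x)\overrightarrow{f}$, and hence
\[
|\Delta^\sigma f(x)|^2=\overrightarrow{f}^T\,\Delta^\sigma(x)^T\overline{\Delta^\sigma(x)}\,\overline{\overrightarrow{f}}.
\]

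Next I would plug these expressions into the pointwise curvature inequality (2.12). For fixed $x$, the inequality $\Gamma_2^\sigma(f)(x)\geq \tfrac{1}{n}|\Delta^\sigma f(x)|^2+K\Gamma^\sigma(f)(x)$ becomes
\[
\overrightarrow{f}^T\Bigl[\Gamma_2^\sigma(x)-\tfrac{1}{n}\Delta^\sigma(x)^T\overline{\Delta^\sigma(x)}-K\,\Gamma^\sigma(x)\Bigr]\overline{\overrightarrow{f}}\ \geq\ 0.
\]
Requiring this for all vector fields $f:V\to\mathbb{K}^d$ is by standard linear algebra precisely the statement that the bracketed Hermitian matrix is positive semidefinite, i.e.\ the linear matrix inequality of Definition 3.5 holds at $x$. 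This establishes the equivalence of the pointwise $CD^\sigma(K,n)$ condition at $x$ with the LMI at $x$.

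Finally, $K_n(G,\mu,\sigma;x)$ is by Definition 3.2 the largest $K\in\mathbb{R}$ for which the pointwise inequality (2.12) holds at $x$; by the equivalence just established this is the largest $K$ for which $\Gamma_2^\sigma(x)-\tfrac{1}{n}\Delta^\sigma(x)^T\overline{\Delta^\sigma(x)}\succeq K\,\Gamma^\sigma(x)$. Since the feasibility constraint is a linear matrix inequality in the scalar variable $K$ and the objective $K$ is linear, this is exactly the semidefinite program in the statement. Existence of the optimum follows from the fact that the feasible set is a nonempty closed interval in $K$ bounded above (by e.g.\ Corollary 3.12 there is a universal lower curvature bound, and the top of the Rayleigh-type quotient is attained), so the supremum is achieved. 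The only subtlety worth double-checking is the interpretation over $\mathbb{K}=\mathbb{R}$ versus $\mathbb{K}=\mathbb{C}$: in the real case one may either work with complex $\overrightarrow{f}$ (the LMI is unchanged since the matrices are real symmetric) or symmetrize $\Gamma_2^\sigma(x)$ and $\Delta^\sigma(x)^T\Delta^\sigma(x)$ explicitly, but in either reading the SDP and its optimum coincide. No step here is genuinely hard; the only point that needs care is the passage from quantifying over all $f$ to positive semidefiniteness of the coefficient matrix, which is just the definition of PSD applied to a Hermitian form.
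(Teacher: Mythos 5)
Your proposal is correct and follows exactly the route the paper intends: the paper gives no explicit proof, presenting the proposition as a direct consequence of the linear-matrix-inequality reformulation in Definition 3.5, which is precisely the reduction (Hermitian form nonnegative for all $\overrightarrow{f}$ iff its matrix is positive semidefinite) that you spell out. Your additional remarks on attainment of the optimum and the $\mathbb{K}=\mathbb{R}$ versus $\mathbb{K}=\mathbb{C}$ reading are sensible fillings-in of details the paper leaves implicit.
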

Semidefinite programming can be efficiently solved. There are several software packages available.

In the following, we describe the explicit structure of the matrices $\Delta^\sigma(x), \Gamma^\sigma(x)$ and $\Gamma_2^\sigma(x)$. For simplicity, we restrict to the setting
\begin{equation}\label{setting:1}
\mu=\mathbf{1}_V,\,\,\text{i.e.},\,\,\mu(x)=1 \,\,\forall\,\,x\in V,
\end{equation}
and
\begin{equation}\label{setting:2}
w_{xy}=1\,\,\forall\,\,\{x,y\}\in E.
\end{equation}

Given a vertex $x\in V$, let us denote its neighbors by $y_1,y_2,\ldots,y_{d_x}$. By abuse of notation, we still write $\Delta^\sigma(x)$ and $\Gamma^\sigma(x)$ for their nontrivial blocks corresponding to the vertices $x,y_1,\ldots,y_{d_x}$. Then it is easy to see that
\begin{equation}
\Delta^\sigma(x)=\left(
                   \begin{array}{cccc}
                     -d_xI_d & \sigma_{xy_1} & \cdots & \sigma_{xy_{d_x}} \\
                   \end{array}
                 \right),
\end{equation}
and
\begin{equation}\label{eq:GammaMatrix}
 2\Gamma^\sigma(x)=\left(
                    \begin{array}{cccc}
                      d_xI_d & -\overline{\sigma_{xy_1}} & \cdots & -\overline{\sigma_{xy_{d_x}}} \\
                      -\sigma_{xy_1}^T & I_d & \cdots & 0 \\
                      \vdots & \vdots & \ddots & \vdots \\
                      -\sigma_{xy_{d_x}}^T & 0 & \cdots & I_d \\
                    \end{array}
                  \right).
\end{equation}

For the matrix $\Gamma_2^\sigma(x)$, the structure of the subgraph induced by $B_2(x)$ is of relevance. We denote the sphere of radius $r$ centered at a vertex $x\in V$ by
$$\mathrm{S}_r(x):=\{y\in V\mid \mathrm{dist}(x,y)=r\}.$$
Then, the ball $B_2(x)$ has the decomposition $B_2(x)=\{x\}\sqcup \mathrm{S}_1(x)\sqcup\mathrm{S}_2(x)$.

We first introduce some natural geometric quantities before we present the entries of the matrix $\Gamma_2^\sigma(x)$. For any vertex $y\in \mathrm{S}_1(x)$, we have
\begin{equation}\label{eq:farneighbors}
 |\mathrm{S}_1(y)\cap\mathrm{S}_2(x)|:=\sum_{z,z\sim y,z\not\sim x,z\neq x}1,
\end{equation}
and
\begin{equation}\label{eq:triangles}
 \sharp_{\bigtriangleup}(x,y):=|\mathrm{S}_1(y)\cap\mathrm{S}_1(x)|:=\sum_{z,z\sim y,z\sim x}1.
\end{equation}
Note that (\ref{eq:triangles}) is the number of triangles (i.e., $3$-cycles) which contain the two neighbors $x$ and $y$. This justifies the notation $\sharp_{\bigtriangleup}(x,y)$.

For any vertex $z\in \mathrm{S}_2(x)$, we have
\begin{equation}\label{eq:nearneighbors}
|\mathrm{S}_1(z)\cap\mathrm{S}_1(x)|:=\sum_{y,y\sim x,y\sim z}1.
\end{equation}
Note that (\ref{eq:nearneighbors}) is related to the number of $4$-cycles which contain the two vertices $x$ and $z$.

\begin{remark}
The above three geometric quantities are all closely related to the growth rate of the cardinality of $B_r(x)$ (in other words, the volume of $B_r(x)$ w.r.t. the measure $\mu=\mathbf{1}_V$) when the radius $r$ increases.
\end{remark}

The quantity $\sharp_{\bigtriangleup}(x,y)$ counts the number of $3$-cycles regardless of their signatures. A signed version of this quantity is also important, and we define the following quantity describing the unbalancedness of the triangles containing the two neighbors $x$ and $y$:
\begin{equation}\label{eq:signedtriangles}
\sharp_{\bigtriangleup}^\sigma(x,y):=\sum_{z,z\sim y,z\sim x}\left(\mathrm{I}_d-\overline{\sigma_{xz}\sigma_{zy}\sigma_{yx}}\right).
\end{equation}
Note that the balanced triangles containing $x$ and $y$ do not contribute to the expression in (\ref{eq:signedtriangles}).
\begin{pro}\label{pro:gamma2Matrix}
 Under the setting of (\ref{setting:1}) and (\ref{setting:2}), the nontrivial block of $\Gamma_2^\sigma(x)$, which is Hermitian and of size $|B_2(x)|\times |B_2(x)|$, is given by the following blocks:
\begin{align}
&(4\Gamma_2^\sigma(x))_{xx}=(3d_x+d_x^2)\mathrm{I}_d;\\
&(4\Gamma_2^\sigma(x))_{xy}=-\left(3+d_x+|\mathrm{S}_1(y)\cap\mathrm{S}_2(x)|+\sharp_{\bigtriangleup}^\sigma(x,y)\right)\overline{\sigma_{xy}},\\
&\hspace{7cm}\text{ for any $y\in \mathrm{S}_1(x)$;}\notag\\
&(4\Gamma_2^\sigma(x))_{xz}=\sum_{y,y\sim x,y\sim z}\overline{\sigma_{xy}\sigma_{yz}}, \text{ for any $z\in \mathrm{S}_2(x)$;}\\
&(4\Gamma_2^\sigma(x))_{yy}=\left(5-d_x+3|\mathrm{S}_1(y)\cap\mathrm{S}_2(x)|+4\sharp_{\bigtriangleup}(x,y)\right)\mathrm{I}_d,\\
&\hspace{7cm}\text{ for any $y\in \mathrm{S}_1(x)$;}\notag\\
&(4\Gamma_2^\sigma(x))_{y_1y_2}=2\overline{\sigma_{y_1x}\sigma_{xy_2}}-4\overline{\sigma_{y_1y_2}},\\
&\hspace{1cm}\text{ for any $y_1,y_2\in \mathrm{S}_1(x)$, $y_1\neq y_2$, where we use $\sigma_{y_1y_2}=0$ if $\{y_1,y_2\}\not\in E$;}\notag\\
&(4\Gamma_2^\sigma(x))_{yz}=-2\overline{\sigma_{yz}},\\
&\hspace{1cm}\text{ for any $y\in \mathrm{S}_1(x)$ and $z\in\mathrm{S}_2(x)$, where we use $\sigma_{yz}=0$ if $\{y,z\}\not\in E$;}\notag\\
&(4\Gamma_2^\sigma(x))_{zz}=|\mathrm{S}_1(z)\cap\mathrm{S}_1(x)|\mathrm{I}_d, \text{ for any $z\in \mathrm{S_2}(x)$};\\
&(4\Gamma_2^\sigma(x))_{z_1z_2}=0, \text{ for any $z_1,z_2\in \mathrm{S}_2(x)$, $z_1\neq z_2$.}
\end{align}
\end{pro}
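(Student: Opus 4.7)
The plan is to compute $2\Gamma_2^\sigma(f,g)(x)$ directly from Definition \ref{def:introGamma} and then read off the Hermitian matrix representation. Polarisation reduces the task to specifying $\Gamma_2^\sigma(f,g)(x)$ on test functions of the form $f = e_i\mathbf{1}_u$ and $g = e_j\mathbf{1}_v$ with $u,v\in B_2(x)$ and $e_i,e_j$ standard basis vectors of $\mathbb{K}^d$; the $(i,j)$-entry of the block $(\Gamma_2^\sigma(x))_{uv}$ is then this value. Since $\Gamma_2^\sigma$ only sees $B_2(x)$, no test function supported outside this ball contributes, which explains the block structure claimed in the statement.

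First I would substitute the pointwise formula from Proposition \ref{pro:Gammasigma}(i), specialised to $\mu\equiv 1$, $w\equiv 1$, giving
\begin{equation*}
2\Gamma^\sigma(f,g)(y) = \sum_{z\sim y}\bigl[f(z)^T\overline{g(z)} - f(z)^T\sigma_{yz}^T\overline{g(y)} - f(y)^T\overline{\sigma_{yz}}\,\overline{g(z)} + f(y)^T\overline{g(y)}\bigr],
\end{equation*}
using $\sigma_{yz}^T\overline{\sigma_{yz}}=\mathrm{I}_d$. From this, $\Delta\Gamma^\sigma(f,g)(x) = \sum_{y\sim x}\Gamma^\sigma(f,g)(y) - d_x\Gamma^\sigma(f,g)(x)$ expands into a sum indexed by ordered pairs of edges $(x,y),(y,z)$, ranging over all $y\in \mathrm{S}_1(x)$ and $z\sim y$. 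The terms $\Gamma^\sigma(f,\Delta^\sigma g)(x)$ and $\Gamma^\sigma(\Delta^\sigma f,g)(x)$ are computed similarly by inserting $\Delta^\sigma g(y) = \sum_{z\sim y}(\sigma_{yz}g(z)-g(y))$ and its analogue for $f$, producing another double sum over neighbours of $x$ and their neighbours. The indexing vertices that actually appear are therefore exactly those in $\{x\}\sqcup\mathrm{S}_1(x)\sqcup\mathrm{S}_2(x)=B_2(x)$, confirming the block support.

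Next I would reorganise the resulting double sum according to the pair $(u,v)\in B_2(x)\times B_2(x)$ labelling the test functions. The diagonal $(x,x)$ entry collects one $d_x$ from $2\Delta\Gamma^\sigma$ (via the neighbour sum) and $d_x^2$ from $-2\Gamma^\sigma(f,\Delta^\sigma g)-2\Gamma^\sigma(\Delta^\sigma f,g)$ at $x$, together with lower-order pieces that combine to $3d_x+d_x^2$. The off-diagonal $(x,y)$ entries with $y\in\mathrm{S}_1(x)$ collect (a) a $-(3+d_x)\overline{\sigma_{xy}}$ contribution from local terms at $x$ and at $y$, (b) one $-\overline{\sigma_{xy}}$ contribution per neighbour $z$ of $y$ lying in $\mathrm{S}_2(x)$, accounting for the coefficient $|\mathrm{S}_1(y)\cap\mathrm{S}_2(x)|$, and (c) for each common neighbour $z$ of $x,y$, a combination $\overline{\sigma_{xz}\sigma_{zy}} - \overline{\sigma_{xy}}$, which using $\sigma_{yx}\sigma_{xy}=\mathrm{I}_d$ rewrites as $(\mathrm{I}_d-\overline{\sigma_{xz}\sigma_{zy}\sigma_{yx}})\overline{\sigma_{xy}}$, whose sum over such $z$ is exactly $\sharp_{\bigtriangleup}^\sigma(x,y)\overline{\sigma_{xy}}$. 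The remaining blocks $(x,z)$, $(y,y)$, $(y_1,y_2)$, $(y,z)$, $(z,z)$ and $(z_1,z_2)$ for $y,y_i\in \mathrm{S}_1(x)$ and $z,z_i\in\mathrm{S}_2(x)$ are read off by the same bookkeeping.

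The main obstacle is the combinatorial bookkeeping: a single vertex $z\in\mathrm{S}_2(x)$ may be reached through several $y\in\mathrm{S}_1(x)$, and a triangle at $x,y$ produces contributions in several of the pieces $\Delta\Gamma^\sigma$, $\Gamma^\sigma(\Delta^\sigma\cdot,\cdot)$ and $\Gamma^\sigma(\cdot,\Delta^\sigma\cdot)$; keeping track of signs and holonomies so that balanced triangles ($\sigma_{xz}\sigma_{zy}\sigma_{yx}=\mathrm{I}_d$) cancel and only their defect $\mathrm{I}_d-\overline{\sigma_{xz}\sigma_{zy}\sigma_{yx}}$ survives is what forces the appearance of the signed quantity $\sharp_{\bigtriangleup}^\sigma$ rather than the unsigned $\sharp_{\bigtriangleup}$ in the $(x,y)$ entry. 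Once each of the pairs $(u,v)$ has been handled with this accounting, every coefficient listed in the proposition is obtained directly, and Hermiticity follows automatically from (\ref{eq:gamma symmetric}).
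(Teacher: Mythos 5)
Your proposal is correct and is exactly the approach the paper takes: the paper's own proof consists of the single remark that the formulas ``follow from a direct expansion of the identity $\Gamma_2^\sigma(f,g)(x)=\overrightarrow{f}^T\Gamma_2^\sigma(x)\overline{\overrightarrow{g}}$,'' with all details omitted, and your outline carries out precisely that expansion via polarisation against the test functions $e_i\mathbf{1}_u$, $e_j\mathbf{1}_v$. Your bookkeeping checks out on the nontrivial entries (e.g.\ for the $(x,y)$ block the identity $d_y=1+\sharp_{\bigtriangleup}(x,y)+|\mathrm{S}_1(y)\cap\mathrm{S}_2(x)|$ reconciles your three contributions with the stated coefficient, and $\overline{\sigma_{xz}\sigma_{zy}}-\overline{\sigma_{xy}}=-(\mathrm{I}_d-\overline{\sigma_{xz}\sigma_{zy}\sigma_{yx}})\overline{\sigma_{xy}}$ produces $\sharp_{\bigtriangleup}^\sigma$ as claimed), so the argument is sound.
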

\begin{proof}
 This follows from a direct expansion of the identity
$$\Gamma_2^\sigma(f,g)(x)=\overrightarrow{f}^T\Gamma_2^\sigma(x)\overline{\overrightarrow{g}},\,\,\text{ for any $f,g:V\to \mathbb{K}^d$}.$$
We omit the details here.
\end{proof}
\begin{remark}\begin{enumerate}[(i)]
                \item The block $(4\Gamma_2^\sigma(x))_{xz}$ above is a signed version of the quantity $|\mathrm{S}_1(z)\cap\mathrm{S}_1(x)|$ in (\ref{eq:nearneighbors}).
                \item When $y_1,y_2\in \mathrm{S}_1(x)$ are neighbors, i.e. $\{y_1,y_2\}\in E$, we have a triangle containing $x,y_1$ and $y_2$. Then the block $(4\Gamma_2^\sigma(x))_{y_1y_2}$ can be rewritten as
$$-2\left(\mathrm{I}_d+\left(\mathrm{I}_d-\overline{\sigma_{y_1x}\sigma_{xy_2}\sigma_{y_2y_1}}\right)\right)\overline{\sigma_{y_1y_2}},$$
which describes the unbalancedness of this triangle.
              \end{enumerate}
\end{remark}

\subsection{Example of a signed triangle}\label{subsection:triangle}
We consider a particular example of a triangle graph $\mathcal{C}_{3}$, which consists of three vertices $x,y,$ and $z$, as shown in Figure \ref{F1}. We set
\begin{equation}\label{setting:3}
 \mu(x)=\mu(y)=\mu(z)=2,\,\,\,\text{ and }\,\,\, w_{xy}=w_{xz}=w_{yz}=1.
\end{equation}
Let $\sigma: E^{or}\to U(1):=\{z\in \mathbb{C}, |z|=1\}$ be a signature on $\mathcal{C}_{3}$. Assume that the signature of the cycle $\mathcal{C}_3$ is equal to (the conjugacy class of) $s\in U(1)$. Then $\sigma$ is switching equivalent to the signature given in Figure \ref{F1}, i.e.,
$$\sigma_{xy}=\sigma_{xz}=1 \,\,\,\text{ and }\,\,\, \sigma_{yz}=s.$$

\begin{figure}[h]
\begin{minipage}[t]{0.45\linewidth}
\centering
\includegraphics[width=0.6\textwidth]{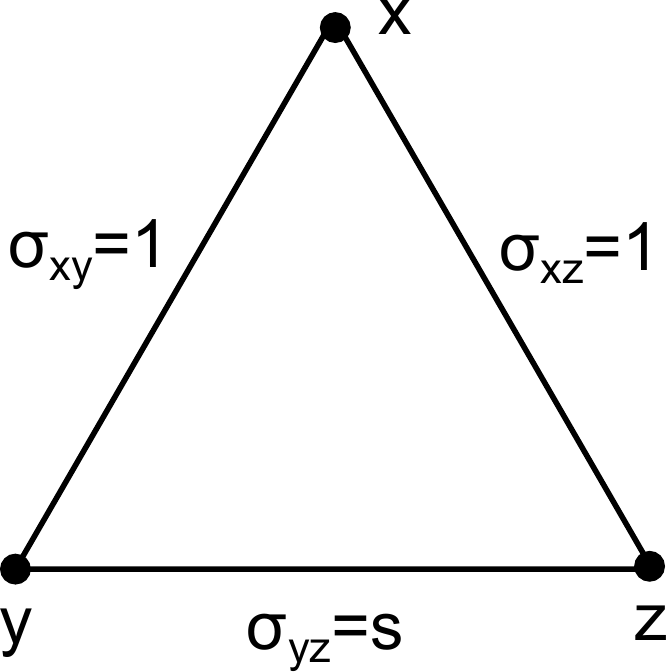}
\caption{A signed triangle\label{F1}}
\end{minipage}
\hfill
\end{figure}

%

\begin{pro}\label{pro:curvature triangle}
Let $(\mathcal{C}_3, \mu, \sigma)$ be as above and $s=Sgn(\mathcal{C}_3)$. Then it has constant $\infty$-dimensional Ricci curvature at every vertex. As a function of $s$, $K_\infty(s):=K_\infty(\mathcal{C}_3, \mu, \sigma)$ is given by
\begin{equation}\label{eq:triangle Curvature}
K_\infty(s)=\left\{
              \begin{aligned}
                &\frac{5}{4}, &&\hbox{if $s=1$;} \\
                &\frac{5-\sqrt{17+8\mathrm{Re}(s)}}{8}, &&\hbox{otherwise.}
              \end{aligned}
            \right.
\end{equation}
\end{pro}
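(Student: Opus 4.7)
The plan is to reduce the proposition to a $3 \times 3$ semidefinite programming problem at a single vertex and solve it by a single determinant calculation.

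\textbf{Step 1: Reductions.} By the rescaling property $K_\infty(\mathcal{C}_3, c\mu, \sigma; x) = c^{-1} K_\infty(\mathcal{C}_3, \mu, \sigma; x)$, I first replace $\mu \equiv 2$ by the unit measure $\mu \equiv \mathbf{1}_V$ and divide the answer by $2$ at the end. By switching invariance (Proposition \ref{pro:curvature-switching}) I normalise the signature to $\sigma_{xy} = \sigma_{xz} = 1$ and $\sigma_{yz} = s$. The triangle is vertex transitive and the cycle signature $s = Sgn(\mathcal{C}_3)$ is a switching invariant, so composing a graph automorphism with a suitable switching identifies the situations at the three vertices; it therefore suffices to compute the curvature at the single vertex $x$.

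\textbf{Step 2: Assembling the SDP.} By Proposition \ref{pro:sdp} the sought value is the largest $K \in \mathbb{R}$ for which
\begin{equation*}
M(K) := 4\Gamma_2^\sigma(x) - 4K\,\Gamma^\sigma(x) \succeq 0.
\end{equation*}
Using Proposition \ref{pro:gamma2Matrix} with $|S_1(y)\cap S_2(x)| = 0$, $\sharp_\triangle(x,y) = \sharp_\triangle(x,z) = 1$, $\sharp_\triangle^\sigma(x,y) = 1 - s$, $\sharp_\triangle^\sigma(x,z) = 1 - \bar s$, together with the structure of $\Gamma^\sigma(x)$ in \eqref{eq:GammaMatrix}, I obtain the explicit Hermitian matrix
\begin{equation*}
M(K) = \begin{pmatrix} 10 - 4K & -6 + s + 2K & -6 + \bar s + 2K \\ -6 + \bar s + 2K & 7 - 2K & 2 - 4\bar s \\ -6 + s + 2K & 2 - 4s & 7 - 2K \end{pmatrix}.
\end{equation*}

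\textbf{Step 3: The determinant identity.} The heart of the argument is the factorisation
\begin{equation*}
\det M(K) = 8\bigl(1 - \mathrm{Re}(s)\bigr)\bigl(2K^2 - 5K + 1 - \mathrm{Re}(s)\bigr).
\end{equation*}
A clean way to organise this is to pass to the orthonormal basis $(1,1,1)/\sqrt 3$, $(0,1,-1)/\sqrt 2$, $(2,-1,-1)/\sqrt 6$, in which $4\Gamma^\sigma(x)$ is diagonal with eigenvalues $0, 2, 6$. In the same basis the $(1,1)$-entry of $M(K)$ collapses to $\tfrac{4}{3}(1 - \mathrm{Re}(s))$, which is strictly positive when $\mathrm{Re}(s) < 1$; taking the Schur complement with respect to that scalar block reduces the PSD condition to a $2 \times 2$ Hermitian matrix whose determinant simplifies to $6\bigl(2K^2 - 5K + 1 - \mathrm{Re}(s)\bigr)$, giving the factorisation above. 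The $\mathbb{Z}_2$-symmetry $s \leftrightarrow \bar s$ (implementing the automorphism $y \leftrightarrow z$) forces dependence on $s$ only through $\mathrm{Re}(s)$ and is a useful consistency check throughout.

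\textbf{Step 4: Extracting the curvature.} For $s \neq 1$ the prefactor is strictly positive and the quadratic has the roots $K_\pm = (5 \pm \sqrt{17 + 8\mathrm{Re}(s)})/4 \in (0, 5/2]$. A direct check shows that $M(0) = 4\Gamma_2^\sigma(x)$ is strictly positive definite (for instance $\det M(0) = 8(1 - \mathrm{Re}(s))^2 > 0$, and the principal minors are easily seen to be strictly positive) and that $\mathrm{tr}\,M(K) = 24 - 8K$ stays positive on $[0, K_+]$; hence by continuity of eigenvalues, exactly one eigenvalue crosses zero at $K = K_-$ and becomes negative on $(K_-, K_+)$. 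The largest admissible $K$ is therefore $K_-$, and dividing by $2$ yields $K_\infty(s) = (5 - \sqrt{17 + 8\mathrm{Re}(s)})/8$ as claimed. For $s = 1$ the determinant identity is vacuous, but in the eigenbasis above $M(K)$ is already the diagonal matrix $\mathrm{diag}(0, 9 - 2K, 15 - 6K)$, the binding constraint is $15 - 6K \geq 0$, and dividing by $2$ gives $K_\infty(1) = 5/4$.

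\textbf{Main obstacle.} The chief difficulty is the determinant computation: the entries of $M(K)$ depend on both $s$ and $\bar s$, and the miraculous factorisation $(1 - \mathrm{Re}(s))\bigl(2K^2 - 5K + 1 - \mathrm{Re}(s)\bigr)$ only emerges after careful bookkeeping of real and imaginary contributions and repeated use of $|s|^2 = 1$. The prefactor $(1 - \mathrm{Re}(s))$ has a clear geometric meaning: the direction $(1,1,1)$ always lies in the kernel of $\Gamma^\sigma(x)$, and it lies in the kernel of $\Gamma_2^\sigma(x)$ precisely when $s = 1$. This dichotomy is exactly what produces the jump of the curvature at $s = 1$ highlighted in the introduction.
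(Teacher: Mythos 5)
Your proposal is correct and follows essentially the same route as the paper: reduce to the local semidefinite programme at a single vertex via switching invariance and Proposition \ref{pro:sdp}, assemble $\Gamma^\sigma(x)$ and $\Gamma_2^\sigma(x)$ from (\ref{eq:GammaMatrix}) and Proposition \ref{pro:gamma2Matrix}, and solve the resulting $3\times 3$ linear matrix inequality by hand --- indeed your $M(K)$ is exactly the paper's matrix $16\Gamma_2^\sigma(x)-16K\Gamma^\sigma(x)$ after the substitution $K\mapsto 2K$ coming from your measure rescaling, and your factorisation $\det M(K)=8(1-\mathrm{Re}(s))(2K^2-5K+1-\mathrm{Re}(s))$ agrees with the paper's last Sylvester minor $8(1-\mathrm{Re}(s))K^2-10(1-\mathrm{Re}(s))K+(1-\mathrm{Re}(s))^2\geq 0$. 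The only divergence is the endgame: you extract the answer from the determinant alone combined with $M(0)\succ 0$, continuity of eigenvalues, and the (implicit, but worth stating) Loewner monotonicity of $K\mapsto M(K)$ following from $\Gamma^\sigma(x)\succeq 0$, which guarantees the feasible set is a ray and rules out $K>K_+$; the paper instead checks all principal minors via Sylvester's criterion, and both yield the same value.
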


\begin{remark}\label{remark:jumptriangle}
The curvature (\ref{eq:triangle Curvature}) is illustrated in Figure \ref{F2} as a function of the variable $\mathrm{Re}(s)$. The function $K_{\infty}(s)$ "jumps" at $s=1$. That is,
\begin{equation}
\lim_{s\to 1}K_\infty(s)=0, \,\,\,\text{ but }\,\,\, K_\infty(1)=\frac{5}{4}>0.
\end{equation}
We will show that such a "jump" appears in a more general setting in Section \ref{section:Lichnerowicz}.
\end{remark}
\begin{figure}[h]
\centering
\includegraphics[width=.5\textwidth]{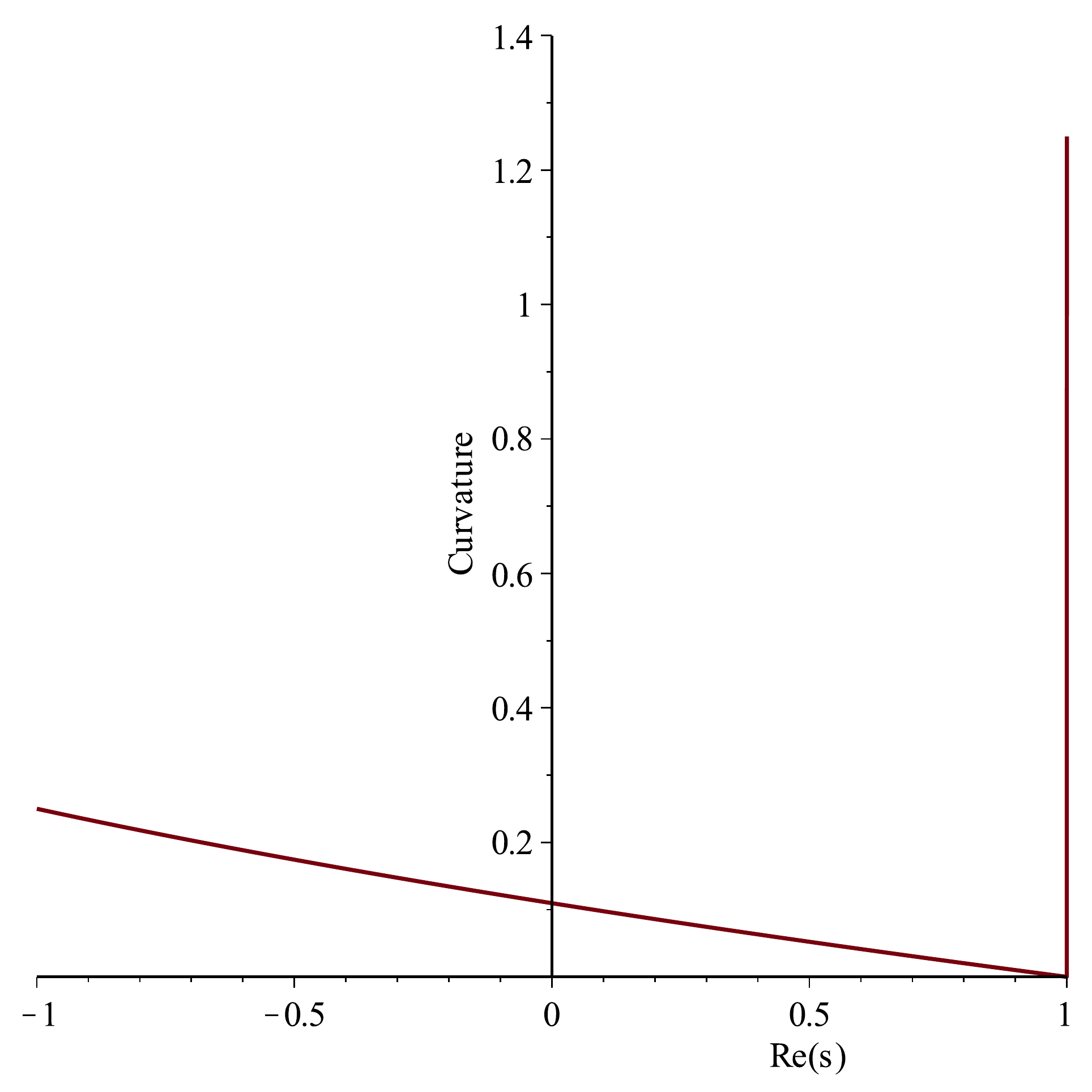}
\caption{$\infty$-dimensional Ricci curvature of a signed triangle}\label{F2}
\end{figure}
\begin{proof}[Proof of Proposition \ref{pro:curvature triangle}]
 Since the curvature is switching invariant, we can switch the signature $\sigma$ to be as shown in Figure \ref{F1} before calculating the curvature $K_\infty(\sigma;x)$ at $x$. In fact, one can do similar operations for calculating $K_\infty(y)$ and $K_\infty(z)$. So $(\mathcal{C}_3, \mu, \sigma)$ has constant curvature and we only need to calculate the curvature at $x$.

By the fact (\ref{eq:GammaMatrix}) and Proposition \ref{pro:gamma2Matrix}, we can obtain the corresponding matrices $\Gamma^\sigma(x)$ and $\Gamma_2^\sigma(x)$. Note in this example, we choose a different measure (\ref{setting:3}) from that in (\ref{setting:1}). Hence these matrices differ by a scaling of $1/2$ and $1/4$ respectively. Therefore, under the current setting (\ref{setting:3}), we have
\begin{equation*}
 2\Gamma^\sigma(x)=\frac{1}{2}\left(
                                \begin{array}{ccc}
                                  2 & -1 & -1 \\
                                  -1 & 1 & 0 \\
                                  -1 & 0 & 1 \\
                                \end{array}
                              \right)
\,\text{ and }\,
4\Gamma_2^\sigma(x)=\frac{1}{4}\left(
                                 \begin{array}{ccc}
                                   10 & -6+s & -6+\overline{s} \\
                                   -6+\overline{s} & 7 & 2-4\overline{s} \\
                                   -6+s & 2-4s & 7 \\
                                 \end{array}
                               \right).
\end{equation*}
By Proposition \ref{pro:sdp}, we need to solve the following semidefinite programming:
\begin{align}
 &\text{maximize}\,\,K\notag\\
&\text{subject to}\,\,\Gamma_2^\sigma(x)\geq K\Gamma^\sigma(x).\label{eq:TriLMI}
\end{align}
Inequality (\ref{eq:TriLMI}) is equivalent to positive semidefiniteness of the following matrix:
\begin{equation}
 16\Gamma_2^\sigma(x)-16K\Gamma^\sigma(x)=\left(
                                           \begin{array}{ccc}
                                             10-8K & -6+s+4K & -6+\overline{s}+4K \\
                                             -6+\overline{s}+4K & 7-4K & 2-4\overline{s} \\
                                             -6+s+4K & 2-4s & 7-4K \\
                                           \end{array}
                                         \right).
\end{equation}
By Sylvester's criterion, this is equivalent to nonnegativity of all principle minors of the above matrix. Calculating these principle minors, we translate the semidefinite programming to the following problem,
\begin{align*}
 \text{maximize} \,\,&K\notag\\
\text{subject to}\,\,&10-8K\geq 0,\,\,7-4K\geq 0\\
&16K^2-8(6+\mathrm{Re}(s))K+(33+12\mathrm{Re}(x))\geq 0,\\
&16K^2-56K+(16\mathrm{Re}(s)+29)\geq 0,\\
&8(1-\mathrm{Re}(s))K^2-10(1-\mathrm{Re}(s))K+(1-Re(s))^2\geq 0.
\end{align*}
Rewriting the above inequalities, we obtain
\begin{align*}
 \text{maximize} \,\,&K\notag\\
\text{subject to}\,\,&K\leq 5/4,\,\,K\leq 7/4,\\
&K\geq (5+\sqrt{17+8\mathrm{Re}(s)})/8\,\,\text{or}\,\,K\leq (5-\sqrt{17+8\mathrm{Re}(s)})/8,\\
&K\geq (7+2\sqrt{5-4\mathrm{Re}(s)})/4\,\,\text{or}\,\,K\leq (7-2\sqrt{5-4\mathrm{Re}(s)})/4,\\
&K\geq (6+\mathrm{Re}(s)+\sqrt{\mathrm{Re}(s)^2+3})/4\,\,\text{or}\,\,K\leq (6+\mathrm{Re}(s)-\sqrt{\mathrm{Re}(s)^2+3})/4.
\end{align*}
One can check directly that (\ref{eq:triangle Curvature}) is the solution of this optimization problem.
\end{proof}
Similarly, one can calculate the $\infty$-dimensional Ricci curvature of longer cycles $\mathcal{C}_N$ for $N\geq 4$.
\begin{pro}\label{pro:curLongCycles}
 Let $(\mathcal{C}_N, \mu, \sigma)$ be a cycle of length $N$ with the edge weights and measure $\mu$ given in (\ref{setting:3}) and $s=Sgn(\mathcal{C}_N)$. Then $(\mathcal{C}_N, \mu, \sigma)$ has constant $\infty$-dimensional Ricci curvature at every vertex.
Moreover, we have
\begin{equation}\label{eq:4cycle Curvature}
K_\infty(\mathcal{C}_4, \mu, \sigma)=\left\{
              \begin{array}{ll}
                1, & \hbox{if $s=1$;} \\
                0, & \hbox{otherwise,}
              \end{array}
            \right.
\end{equation}
and, for $N\geq 5$,
\begin{equation}\label{eq:5cycle Curvature}
K_\infty(\mathcal{C}_N, \mu, \sigma)=0.
\end{equation}
\end{pro}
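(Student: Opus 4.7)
My plan mirrors the proof of Proposition \ref{pro:curvature triangle}: I will use switching invariance (Proposition \ref{pro:curvature-switching}) together with the vertex-transitivity of the cycle to reduce to computing $K_\infty$ at a single vertex, and then either solve the semidefinite program of Proposition \ref{pro:sdp} directly, or exhibit a destabilizing vector field. The cases $N\geq 5$ and $N=4$ will be handled separately, as they rely on rather different ingredients.

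For $N\geq 5$, the cycle $\mathcal{C}_N$ contains no triangles and no $4$-cycles, so Proposition \ref{pro:shortcycles} will give $K_\infty(\mathcal{C}_N,\mu,\sigma)=K_\infty(\mathcal{C}_N,\mu)$ independently of $\sigma$, so that I may assume the trivial signature. The lower bound $K_\infty\geq 0$ is then immediate from Corollary \ref{cor:lower curvature bound} under the measure (\ref{setting:3}). For the matching upper bound $K_\infty\leq 0$, I plan to exhibit an explicit vector in the kernel of $\Gamma_2(x)$ that does not lie in the kernel of $\Gamma(x)$. Writing $B_2(x)=\{x,y_1,y_2,z_1,z_2\}$ with $y_i\sim x$ and $z_i\sim y_i$, and reading off Proposition \ref{pro:gamma2Matrix} (with trivial signature, $d_x=2$, $\sharp_\triangle(x,y_i)=0$, $|\mathrm{S}_1(y_i)\cap\mathrm{S}_2(x)|=|\mathrm{S}_1(z_i)\cap\mathrm{S}_1(x)|=1$), the $5\times 5$ matrices $\Gamma_2(x)$ and $\Gamma(x)$ become explicit, and one checks by direct multiplication that the vector $(f(x),f(y_1),f(y_2),f(z_1),f(z_2))=(0,1,-1,2,-2)$ satisfies $\Gamma_2(f)(x)=0$ while $\Gamma(f)(x)>0$, forcing $K\leq 0$. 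This argument will work simultaneously for $N=5$ (where $z_1,z_2$ happen to be adjacent in the full graph but the block $(4\Gamma_2(x))_{z_1z_2}$ still vanishes by Proposition \ref{pro:gamma2Matrix}) and for $N\geq 6$ (where the induced subgraph on $B_2(x)$ is a path).

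For $N=4$, Proposition \ref{pro:shortcycles} no longer applies, and I will proceed exactly as in the proof of Proposition \ref{pro:curvature triangle}. After switching, I may assume $\sigma_{v_0v_1}=\sigma_{v_1v_2}=\sigma_{v_2v_3}=1$ and $\sigma_{v_3v_0}=s$, so that $Sgn(\mathcal{C}_4)$ is the conjugacy class of $s$. At $x=v_0$ the ball $B_2(x)=\{x,y_1,y_2,z\}$ has $y_1,y_2\in\mathrm{S}_1(x)$ sharing the common second neighbor $z$, and the parameters $d_x=2$, $\sharp_\triangle(x,y_i)=0$, $|\mathrm{S}_1(y_i)\cap\mathrm{S}_2(x)|=1$, $|\mathrm{S}_1(z)\cap\mathrm{S}_1(x)|=2$ plug into Proposition \ref{pro:gamma2Matrix} to produce $4\times 4$ Hermitian matrices $\Gamma^\sigma(x)$ and $\Gamma_2^\sigma(x)$ whose entries depend polynomially on $s$ (for instance $(4\Gamma_2^\sigma(x))_{xz}=1+s$ and $(4\Gamma_2^\sigma(x))_{y_1y_2}=2s$). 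The SDP of Proposition \ref{pro:sdp} then reduces, via Sylvester's criterion, to a system of polynomial inequalities in $K$ and $\mathrm{Re}(s)$, which one solves case by case.

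The main obstacle I expect is the bookkeeping in the $N=4$ case: enumerating all Hermitian principal minors of $4\Gamma_2^\sigma(x)-cK\Gamma^\sigma(x)$ (with $c$ the scaling factor coming from $\mu\equiv 2$) as polynomials in $K$ and $\mathrm{Re}(s)$, and identifying which minor is responsible for the jump at $s=1$. I anticipate a factor of $(1-\mathrm{Re}(s))$ in the binding minor, which vanishes precisely at $s=1$ to allow $K=1$ and otherwise forces $K\leq 0$, in parallel with Remark \ref{remark:jumptriangle}. The $N\geq 5$ case is then routine once the vector $(0,1,-1,2,-2)$ has been identified.
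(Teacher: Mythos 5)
Your plan is correct, and it fills in what the paper leaves implicit: the paper's only stated justification for this proposition is the sentence ``Similarly, one can calculate\ldots'', i.e.\ the same switching--plus--semidefinite-programming computation as in the proof of Proposition \ref{pro:curvature triangle}, which is exactly your $N=4$ argument. I checked your data for $\mathcal{C}_4$: with the switching $\sigma_{v_0v_1}=\sigma_{v_1v_2}=\sigma_{v_2v_3}=1$, $\sigma_{v_3v_0}=s$, the nontrivial blocks from (\ref{eq:GammaMatrix}) and Proposition \ref{pro:gamma2Matrix} are as you say (e.g.\ $(4\Gamma_2^\sigma(x))_{xz}=1+s$, $(4\Gamma_2^\sigma(x))_{y_1y_2}=2s$), and at $s=1$ the matrix $4\Gamma_2^\sigma(x)-8K\Gamma^\sigma(x)$ (in the $\mu=\mathbf{1}_V$ normalization; the factor $8$ accounts for $\mu\equiv 2$) becomes $2vv^T$ with $v=(1,-1,-1,1)$ at $K=1$, confirming $K_\infty(\mathcal{C}_4)=1$ there. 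For $N\geq 5$ your route is more self-contained than the paper's: the paper later verifies (\ref{eq:5cycle Curvature}) indirectly in Corollary \ref{cor:5cycle} by combining the jump phenomenon (Theorem \ref{thm:ApplOFJump}) with the lower bound of Corollary \ref{cor:lower curvature bound}, whereas you use Proposition \ref{pro:shortcycles} to discard the signature and then exhibit an explicit null vector; I verified that $(0,1,-1,2,-2)$ is indeed annihilated by the $5\times 5$ matrix $4\Gamma_2(x)$ read off from Proposition \ref{pro:gamma2Matrix} (in both the $N=5$ and $N\geq 6$ configurations) while $\Gamma(f)(x)>0$, which pins $K_\infty\leq 0$ and, with Corollary \ref{cor:lower curvature bound}, gives equality. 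The one place where your expectations need adjusting is the $N=4$, $s\neq 1$ endgame: the binding constraint is not a principal minor carrying an overall factor of $(1-\mathrm{Re}(s))$ as in the triangle case. In fact $\det\bigl(4\Gamma_2^\sigma(x)\bigr)$ vanishes identically in $s$; what distinguishes $s=1$ is that the corresponding null vector of $\Gamma_2^\sigma(x)$ is annihilated by $\Gamma^\sigma(x)$ precisely when $s=1$ (equivalently, the coefficient of $K$ in $\det\bigl(4\Gamma_2^\sigma(x)-8K\Gamma^\sigma(x)\bigr)$ is strictly negative exactly for $s\neq 1$, forcing $K\leq 0$). This changes only the bookkeeping, not the validity of your approach.
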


We remark that new examples of graphs $(G,\sigma)$ satisfying the $CD^\sigma(0,\infty)$ inequality can be constructed by taking Cartesian products of known examples for various choices of the signature, edge weights, and vertex measure on the product graph. We refer to Appendix \ref{section:appendixCurCheeCar} for full details about the behavior of $CD^\sigma$ inequalities on Cartesian product graphs.

\subsection{Heat semigroup characterizations of $CD^{\sigma}$ inequalities}\label{section:HeatChar}

In this subsection, we derive characterizations of the $CD^{\sigma}$ inequality via the solution of the following associated continuous time heat equation,
\begin{equation}
\left\{\begin{aligned}
        &\frac{\partial u(x,t)}{\partial t}=\Delta^{\sigma}u(x,t),\\
        &u(x,0)=f(x),
        \end{aligned}
\right.
\end{equation}
where $f: V\to \mathbb{K}^d$ is an initial function. The solution $u: V\times [0, \infty)\to \mathbb{K}^d$ is given by $P_t^{\sigma}f:=e^{t\Delta^{\sigma}}f$, where $P_t^\sigma$ is a linear operator on the space $\ell^2(V, \mathbb{K}^d; \mu)$. Clearly, we have $P^{\sigma}_0f=f$. It is straightforward to check the following properties of $P_t^\sigma$.

\begin{pro}\label{pro:heatsemigroup}
  The operator $P^\sigma_t, t\geq 0$ satisfies the following properties:
  \begin{enumerate}[(i)]
  \item $P^\sigma_t$ is a self-adjoint operator on the space $\ell^2(V, \mathbb{K}^d; \mu)$;\label{pro:Ptsa}
  \item $P^\sigma_t$ commutes with $\Delta^\sigma$, i.e. $P^\sigma_t\Delta^\sigma=\Delta^\sigma P_t^\sigma$;\label{pro:Ptcomm}
  \item $P^\sigma_t P^\sigma_s=P^\sigma_{t+s}$ for any $t,s\geq 0$.\label{pro:Ptadd}
  \end{enumerate}
\end{pro}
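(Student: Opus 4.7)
The plan is to exploit the fact that $V$ is finite, so $\Delta^\sigma$ is a linear operator on the finite-dimensional Hilbert space $\ell^2(V,\mathbb{K}^d;\mu)$, and $P_t^\sigma = e^{t\Delta^\sigma}$ is given by the norm-convergent power series
\begin{equation*}
P_t^\sigma = \sum_{k=0}^{\infty} \frac{t^k}{k!}(\Delta^\sigma)^k.
\end{equation*}
All three properties will then reduce to standard facts about exponentials of a self-adjoint operator, and the verification is essentially an exchange of summation and inner product / composition.

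For (\ref{pro:Ptsa}), the starting point is the self-adjointness of $\Delta^\sigma$ recorded in (\ref{eq:self-adjoint}). A trivial induction on $k$ shows that each power $(\Delta^\sigma)^k$ is self-adjoint with respect to $\langle\cdot,\cdot\rangle_\mu$. Since the partial sums $\sum_{k=0}^{N}\tfrac{t^k}{k!}(\Delta^\sigma)^k$ converge in operator norm to $P_t^\sigma$ and the sesquilinear form $\langle\cdot,\cdot\rangle_\mu$ is continuous, I would pass to the limit in
\begin{equation*}
\Bigl\langle \sum_{k=0}^{N}\tfrac{t^k}{k!}(\Delta^\sigma)^k f,\, g\Bigr\rangle_\mu = \Bigl\langle f,\, \sum_{k=0}^{N}\tfrac{t^k}{k!}(\Delta^\sigma)^k g\Bigr\rangle_\mu
\end{equation*}
to obtain $\langle P_t^\sigma f,g\rangle_\mu = \langle f,P_t^\sigma g\rangle_\mu$; note that $t\in\mathbb{R}$ so the coefficients $t^k/k!$ behave identically under conjugation in the second slot.

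For (\ref{pro:Ptcomm}), the operator $\Delta^\sigma$ commutes with every power $(\Delta^\sigma)^k$, so the series defining $P_t^\sigma$ gives
\begin{equation*}
\Delta^\sigma P_t^\sigma = \sum_{k=0}^{\infty}\tfrac{t^k}{k!}(\Delta^\sigma)^{k+1} = P_t^\sigma \Delta^\sigma,
\end{equation*}
again using norm convergence to justify the termwise composition. For (\ref{pro:Ptadd}), since $\Delta^\sigma$ commutes with itself, the Cauchy product of $\sum_k \tfrac{t^k}{k!}(\Delta^\sigma)^k$ and $\sum_j \tfrac{s^j}{j!}(\Delta^\sigma)^j$ rearranges, via the binomial theorem applied to the scalar coefficients, to $\sum_n \tfrac{(t+s)^n}{n!}(\Delta^\sigma)^n = P_{t+s}^\sigma$.

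There is no real obstacle here; the only mild subtlety is that $\Gamma^\sigma$, and hence the ambient inner product, is sesquilinear rather than bilinear, so I will be careful that the exponential parameter $t$ is real (as stated) so that taking $t^k/k!$ through the second slot of $\langle\cdot,\cdot\rangle_\mu$ does not introduce conjugates. Finite-dimensionality eliminates any functional-analytic issues with domains or convergence that would otherwise need attention in the infinite graph setting.
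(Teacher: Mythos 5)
Your proposal is correct and is exactly the standard verification the paper has in mind: the paper omits the proof entirely, remarking only that these properties are "straightforward to check," and your power-series argument on the finite-dimensional space $\ell^2(V,\mathbb{K}^d;\mu)$, resting on the self-adjointness (\ref{eq:self-adjoint}) of $\Delta^\sigma$, supplies precisely those routine details. Nothing further is needed.
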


The solution of the heat equation corresponding to the graph Laplacian $\Delta$ is simply denoted by $P_t:=e^{t\Delta}$. The matrix $P_t$ has the following additional properties besides the ones listed in Proposition \ref{pro:heatsemigroup}.

\begin{pro}\label{pro:heatsemigroupunsigned}
\begin{enumerate}[(i)]
  \item All matrix entries of $P_t$ are real and nonnegative;\label{pro:Ptnonnegative}
  \item For any constant function $c$ on $V$, we have $P_tc=c$.\label{pro:Ptprobability}
\end{enumerate}
\end{pro}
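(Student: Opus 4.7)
The plan is to verify both statements directly from the explicit entries of the scalar graph Laplacian $\Delta$, which, written as an $N\times N$ matrix in the standard vertex basis, satisfies $\Delta_{xx} = -d_x/\mu(x)$ and $\Delta_{xy} = w_{xy}/\mu(x)$ for $x\sim y$ (and $0$ otherwise).

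For part (ii), any constant function $c:V\to\mathbb{K}$ satisfies
\begin{equation*}
\Delta c(x) = \frac{1}{\mu(x)}\sum_{y\sim x} w_{xy}(c-c) = 0,
\end{equation*}
so in the power series expansion $P_t c = \sum_{k\geq 0}\frac{t^k}{k!}\Delta^k c$ every term with $k\geq 1$ vanishes, leaving $P_t c = c$.

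For part (i), I would apply the standard shift trick for Metzler matrices. Set $\alpha := D_G^{non}$ and $M := \Delta + \alpha I$. Reading off entries, $M_{xy} = w_{xy}/\mu(x) \geq 0$ for $y\sim x$, $M_{xy} = 0$ for non-adjacent pairs, and $M_{xx} = \alpha - d_x/\mu(x) \geq 0$ by the choice of $\alpha$, so every entry of $M$ is real and nonnegative. Since $M$ commutes with $\alpha I$,
\begin{equation*}
P_t = e^{t\Delta} = e^{-\alpha t}e^{tM} = e^{-\alpha t}\sum_{k\geq 0}\frac{t^k}{k!}M^k,
\end{equation*}
which, for $t\geq 0$, is a convergent series of matrices with nonnegative real entries (products and sums of nonnegative matrices remain nonnegative) scaled by the positive scalar $e^{-\alpha t}$. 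Hence every entry of $P_t$ is real and nonnegative.

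There is no serious obstacle in this argument; the only point requiring a moment of thought is the choice of the shift, and $\alpha = D_G^{non}$ is precisely the smallest constant making the diagonal of $\Delta + \alpha I$ nonnegative. One could alternatively derive (i) probabilistically by identifying $P_t$ with the transition kernel of a continuous-time random walk on $G$, but the algebraic shift argument above is shorter and avoids introducing additional machinery.
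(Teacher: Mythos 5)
Your proof is correct and follows essentially the same route as the paper: the shift argument $P_t = e^{-Ct}e^{t(\Delta + C\,\mathrm{I}_N)}$ with an entry-wise nonnegative shifted matrix for (i), and $\Delta c = 0$ for (ii). The only (harmless) difference is that you pin down the optimal shift $C = D_G^{non}$ explicitly, whereas the paper just takes any sufficiently large $C>0$.
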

In particular, the above properties imply that for a function $f:V\to \mathbb{R}$ with $0\leq f(x)\leq c,\,\,\forall\,\,x\in V$, we have $0\leq P_tf(x)\leq c,\,\,\forall\,\,x\in V$.

\begin{proof}
 Recall that $\Delta$ can be written as the matrix $(\mathrm{diag}_\mu)^{-1}(A-\mathrm{diag}_D)$, where $\mathrm{diag}_D$ and $\mathrm{diag}_\mu$ are the diagonal matrices with $(\mathrm{diag}_D)_{xx}=d_x$ and $(\mathrm{diag}_\mu)_{xx}=\mu(x)$ for all $x\in V$, and $A$ is the weighted adjacency matrix.
 Now we exploit the fact that
 \begin{equation}\label{eq:Offdiag}
\text{all off-diagonal entries of }\,\,\Delta\,\,\text{ are nonnegative,}
\end{equation}
and, therefore, we can choose $C>0$ such that
$\Delta+C\cdot \mathrm{I}_N$ is entry-wise nonnegative.
Then $e^{\Delta+C\cdot \mathrm{I}_N}$ is also entry-wise nonnegative, which implies that the same holds for $P_t=e^{\Delta+C\cdot \mathrm{I}_N}\cdot e^{-C\cdot \mathrm{I}_N}$.

For the constant function $c$, we have
\begin{equation}\label{eq:heatNosign2}
\Delta c=0.
\end{equation}
Therefore, we have $\frac{\partial}{\partial t}P_tc=0$, which implies $P_tc=c$.
\end{proof}

\begin{remark}
Note that the two facts (\ref{eq:Offdiag}) and (\ref{eq:heatNosign2}) do not extend to general $P_t^\sigma$, even when $\sigma$ only takes values from $O(1)=\{\pm 1\}$. Therefore Proposition \ref{pro:heatsemigroupunsigned} does not hold for the more general operators $P_t^\sigma$.
\end{remark}

If $n=\infty$, the $CD^{\sigma}$ inequality is equivalent to the following local functional inequalities of $P^{\sigma}_tf$.
\begin{thm}\label{thm:curvature-characterization}
Let $(G, \mu, \sigma)$ be given. Then the following are equivalent:
\begin{enumerate}[(i)]
\item The inequality $CD^{\sigma}(K, \infty)$ holds, i.e., for any function $f:V\to \mathbb{K}^d$, we have
$$\Gamma_2^\sigma(f)\geq K\Gamma^\sigma(f);$$\label{eq:cdsigma}
\item For any function $f:V\to \mathbb{K}^d$ and $t\geq 0$, we have
$$\Gamma^{\sigma}(P_t^{\sigma}f)\leq e^{-2Kt}P_t(\Gamma^{\sigma}(f));$$\label{eq:BEgradient}
\item For any function $f:V\to \mathbb{K}^d$ and $t\geq 0$, we have
$$P_t(|f|^2)-|P_t^{\sigma}f|^2\geq \frac{1}{K}(e^{2Kt}-1)\Gamma^{\sigma}(P_t^{\sigma}f),$$
where we replace $(e^{2Kt}-1)/K$ by $2t$ in the case $K=0$.
\label{eq:BEgradient2}
\end{enumerate}
\end{thm}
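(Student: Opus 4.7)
I would follow the classical Bakry--\'Emery interpolation scheme, suitably adapted to the signed setting, and close the cycle of implications (i) $\Rightarrow$ (ii) $\Rightarrow$ (iii) $\Rightarrow$ (i). The whole argument rests on two ingredients that are already in place: the commutation relation $P_t^{\sigma}\Delta^{\sigma}=\Delta^{\sigma}P_t^{\sigma}$ and $P_t\Delta = \Delta P_t$ from Proposition~\ref{pro:heatsemigroup}, together with the crucial positivity/monotonicity of the \emph{scalar} heat semigroup $P_t$ coming from Proposition~\ref{pro:heatsemigroupunsigned}(i). The other workhorse is the identity obtained by setting $f=g$ in the definition (\ref{eq:introGammatwo}) of $\Gamma_2^{\sigma}$:
\begin{equation*}
2\Gamma_2^{\sigma}(g) \;=\; \Delta\,\Gamma^{\sigma}(g) \;-\; \Gamma^{\sigma}(\Delta^{\sigma}g,g) \;-\; \Gamma^{\sigma}(g,\Delta^{\sigma}g),
\end{equation*}
which is well-defined and real-valued in view of the Hermitian properties (\ref{eq:gamma symmetric}).

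\textbf{Step 1: (i) $\Rightarrow$ (ii).} Fix $t>0$ and $f:V\to\mathbb{K}^d$, and define the interpolation
\begin{equation*}
\phi(s) \;:=\; e^{-2Ks}\,P_s\bigl(\Gamma^{\sigma}(P_{t-s}^{\sigma}f)\bigr), \qquad s\in[0,t],
\end{equation*}
so that $\phi(0)=\Gamma^{\sigma}(P_t^{\sigma}f)$ and $\phi(t)=e^{-2Kt}P_t(\Gamma^{\sigma}(f))$. Differentiating, using $\frac{d}{ds}P_{t-s}^{\sigma}f = -\Delta^{\sigma}P_{t-s}^{\sigma}f$, the sesquilinearity of $\Gamma^{\sigma}$, and the identity above with $g=P_{t-s}^{\sigma}f$, the terms involving $\Delta\,\Gamma^{\sigma}(g)$ cancel against the time-derivative of $P_s$, leaving
\begin{equation*}
\phi'(s) \;=\; 2\,e^{-2Ks}\,P_s\!\left(\Gamma_2^{\sigma}(P_{t-s}^{\sigma}f) - K\,\Gamma^{\sigma}(P_{t-s}^{\sigma}f)\right).
\end{equation*}
Assuming (i), the integrand is nonnegative pointwise, and since $P_s$ has nonnegative matrix entries, $\phi'\ge 0$. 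Hence $\phi(t)\ge\phi(0)$, which is precisely (ii).

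\textbf{Step 2: (ii) $\Rightarrow$ (iii).} Consider the second interpolation
\begin{equation*}
\psi(s)\;:=\;P_s\bigl(|P_{t-s}^{\sigma}f|^2\bigr),\qquad s\in[0,t],
\end{equation*}
so that $\psi(t)-\psi(0)=P_t(|f|^2)-|P_t^{\sigma}f|^2$. A direct computation, substituting $g=P_{t-s}^{\sigma}f$ into $2\Gamma^{\sigma}(g)=\Delta(g^T\overline g)-g^T\overline{\Delta^{\sigma}g}-(\Delta^{\sigma}g)^T\overline g$, yields
\begin{equation*}
\psi'(s)\;=\;2\,P_s\bigl(\Gamma^{\sigma}(P_{t-s}^{\sigma}f)\bigr).
\end{equation*}
Now apply (ii) to the initial datum $P_{t-s}^{\sigma}f$ for time $s$, invoking the semigroup property $P_s^{\sigma}P_{t-s}^{\sigma}=P_t^{\sigma}$, to obtain $\Gamma^{\sigma}(P_t^{\sigma}f)\le e^{-2Ks}P_s(\Gamma^{\sigma}(P_{t-s}^{\sigma}f))$, i.e.\ $\psi'(s)\ge 2e^{2Ks}\Gamma^{\sigma}(P_t^{\sigma}f)$. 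Integration from $0$ to $t$ produces (iii), with the factor $(e^{2Kt}-1)/K$ replaced by $2t$ in the case $K=0$.

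\textbf{Step 3: (iii) $\Rightarrow$ (i), and the main obstacle.} Both sides of (iii) vanish at $t=0$ and a direct Taylor expansion shows the first-order terms in $t$ coincide (both equal $2\Gamma^{\sigma}(f)$), so that the first real constraint comes from the second-order coefficient. Expanding $|P_t^{\sigma}f|^2$ to order $t^2$, $P_t(|f|^2)$ to order $t^2$, and $\frac{e^{2Kt}-1}{K}\Gamma^{\sigma}(P_t^{\sigma}f)$ to order $t^2$, and then using the $\Gamma_2^{\sigma}$-identity displayed above to simplify, the second-order comparison reduces to $\Gamma_2^{\sigma}(f)\ge K\,\Gamma^{\sigma}(f)$ at every vertex, which is (i). The main subtlety throughout is that $P_t^{\sigma}$ itself is \emph{not} positivity preserving (cf.\ the Remark after Proposition~\ref{pro:heatsemigroupunsigned}), so the monotonicity arguments in Steps~1 and~2 must be organized so that the scalar semigroup $P_t$, and not $P_t^{\sigma}$, is what acts on the real-valued quantities $\Gamma^{\sigma}(g)$ and $\Gamma_2^{\sigma}(g)$; the Hermitian properties (\ref{eq:gamma symmetric}) are exactly what guarantee these quantities are real. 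Keeping track of this split, and carefully cancelling the $\Delta\,\Gamma^{\sigma}$ terms against the time-derivatives of the semigroups, is the principal computational point of the proof.
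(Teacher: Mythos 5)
Your proposal is correct and follows essentially the same route as the paper: the same cyclic scheme (i)\,$\Rightarrow$\,(ii)\,$\Rightarrow$\,(iii)\,$\Rightarrow$\,(i) with the two interpolants $e^{-2Ks}P_s(\Gamma^{\sigma}(P_{t-s}^{\sigma}f))$ and $P_s(|P_{t-s}^{\sigma}f|^2)$, the positivity of the scalar semigroup $P_t$ (rather than $P_t^{\sigma}$) to conclude monotonicity, and the second-order Taylor expansion at $t=0$ for the closing implication. The point you flag about routing all positivity through $P_t$ is exactly the subtlety the paper's remark addresses.
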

\begin{remark} Theorem \ref{thm:curvature-characterization} is similar in spirit to \cite[Propostion 3.3]{Bakry}. Note that the Proposition \ref{pro:heatsemigroupunsigned} (\ref{pro:Ptnonnegative}), which is crucial for the proof of \cite[Propostion 3.3]{Bakry}, is not true for $P_t^\sigma$ in general. However, with our definitions of the operators $\Gamma^\sigma$ and $\Gamma_2^\sigma$, we avoid this difficulty.

\end{remark}

\begin{proof}
(\ref{eq:cdsigma}) $\Rightarrow$ (\ref{eq:BEgradient}): For any $0\leq s\leq t$, we consider
\begin{equation}
F(s):=e^{-2Ks}P_s(\Gamma^{\sigma}(P_{t-s}^{\sigma}f)).
\end{equation}
Since $F(0)=\Gamma^{\sigma}(P_t^{\sigma}f)$ and $F(t)=e^{-2Kt}P_t(\Gamma^{\sigma}(f))$, it is enough to prove $\frac{d}{ds}F(s)\geq~0$.
We calculate
  \begin{equation*}
    e^{2Ks}\frac{d}{ds}F(s)=-2KP_s(\Gamma^\sigma(P^\sigma_{t-s}f))+\Delta P_s(\Gamma^\sigma(P^\sigma_{t-s}f))+P_s(\frac{d}{ds}\Gamma^\sigma(P^\sigma_{t-s}f)),
  \end{equation*}
and
\begin{equation*}
\frac{d}{ds}\Gamma^\sigma(P^\sigma_{t-s}f)=-\Gamma^\sigma(\Delta^\sigma P_{t-s}^\sigma f, P_{t-s}^\sigma f)-\Gamma^\sigma(P_{t-s}^\sigma f, \Delta^\sigma P_{t-s}^\sigma f).
\end{equation*}
Therefore, $\Delta P_s=P_s\Delta$ and the inequality $CD^{\sigma}(K, \infty)$ imply
 \begin{equation*}
    \frac{d}{ds}F(s)=2e^{-2Ks}P_s \big[ \Gamma^\sigma_2(P^\sigma_{t-s}f)-K\Gamma^\sigma(P^\sigma_{t-s}f) \big]\geq 0,
  \end{equation*}
where we used Proposition \ref{pro:heatsemigroupunsigned} (\ref{pro:Ptnonnegative}). This proves (\ref{eq:BEgradient}).

(\ref{eq:BEgradient}) $\Rightarrow$ (\ref{eq:BEgradient2}):  For $0\leq s\leq t$, we consider
\begin{equation}
G(s):=P_s(\left|P^\sigma_{t-s}f\right|^2).
\end{equation}
Note that $G(0)=\left| P^\sigma_tf \right|^2$ and $G(t)=P_t(|f|^2)$.  Using the estimate (\ref{eq:BEgradient}) and Proposition \ref{pro:heatsemigroup}, we have
  \begin{align*}
    \frac{d}{ds}G(s)&=\Delta P_s(\left| P^\sigma_{t-s}f \right|^2)+P_s\left[-(P^\sigma_{t-s}f)^T(\overline{\Delta^\sigma P^\sigma_{t-s}f})-(\Delta^\sigma P^\sigma_{t-s}f)^T\overline{P^\sigma_{t-s}f}\right]\\
    &=2P_s(\Gamma^\sigma(P^\sigma_{t-s}f))\geq 2e^{2Ks}\Gamma^\sigma(P^\sigma_tf).
  \end{align*}
Therefore, we obtain
  \begin{equation*}
    G(t)-G(0)=\int_0^t \frac{d}{ds}G(s) ds\geq 2\Gamma^\sigma(P^\sigma_tf)\int_0^te^{2Ks}ds= \frac{e^{2Kt}-1}{K}\Gamma^\sigma(P^\sigma_tf).
  \end{equation*}
  This proves (\ref{eq:BEgradient2}).

(\ref{eq:BEgradient2}) $\Rightarrow$ (\ref{eq:cdsigma}): Here, we consider the inequality (\ref{eq:BEgradient2}) at $t=0$ and use the expansion
\begin{equation*}
P_t^\sigma=\mathrm{Id}+t\Delta^\sigma+\frac{t^2}{2}(\Delta^\sigma)^2+o(t^2).
\end{equation*}
Dividing (\ref{eq:BEgradient2}) by $2t^2$ and letting $t$ tend to zero, we obtain
\begin{align*}
&\frac{1}{4}\Delta^2(|f|^2)-\frac{1}{4}f^T\left(\overline{(\Delta^\sigma)^2f}\right)-\frac{1}{4}\left((\Delta^\sigma)^2f\right)^T\overline{f}-\frac{1}{2}(\Delta^\sigma f)^T(\overline{\Delta^\sigma f})\\
\geq & K\Gamma^{\sigma}(f)+\Gamma^{\sigma}(f, \Delta^\sigma f)+\Gamma^{\sigma}(\Delta^\sigma f, f).
\end{align*}
Using Definition \ref{def:introGamma}, the above inequality simplifies to
\begin{equation*}
\Gamma^\sigma_2(f)\geq K\Gamma^{\sigma}(f),
\end{equation*}
which shows (\ref{eq:cdsigma}).
\end{proof}

\section{Multi-way Cheeger constants with signatures}\label{section:MultCheeSig}

In this section, we introduce multi-way Cheeger constants with signatures for graphs $(G,\mu, \sigma)$.

\subsection{Cheeger constants with signatures}\label{section:CheegerSig}
Following the ideas of \cite{LLPP15}, we introduce a Cheeger type constant of $(G, \mu, \sigma)$ as a mixture of a frustration index and the expansion rate.
For any nonempty subset $S\subseteq V$, the frustration index $\iota^\sigma(S)$ is a measure of the unbalancedness of the signature $\sigma$ on the induced subgraph of $S$. For that purpose, we need to choose a norm on $H$, to measure the distance between different elements in $H$.
\begin{definition}
Given a $(d\times d)$-matrix $A=(a_{ij})$, we define the \emph{average $(2,1)$-norm} $|A|_{2,1}$ of $A$ as
\begin{equation}\label{defn:matrixNorm}
|A|_{2,1}:=\frac 1 d \sum_{i=1}^d\left(\sum_{j=1}^d|a_{ij}|^2\right)^{\frac{1}{2}}.
\end{equation}
If we denote the vector of the $i$-th column of $A$ by $A^i$, this norm can be rewritten as $|A|_{2,1}:=\frac 1 d \sum_{i=1}^d |A^i|$. Recall that $|A^i|^2:=(A^i)^T\overline{A^i}$.
\end{definition}
\begin{remark}
\begin{enumerate}[(i)]
\item The average $(2,1)$-norm is smaller or equal to the Frobenius norm (alternatively, called Hilbert-Schmidt norm), i.e., we have for any $(d\times d)$-matrix $A=(a_{ij})$,
    \begin{equation}\label{eq:normwithFrobenius}
    |A|_{2,1}\leq \frac{1}{\sqrt{d}}|A|_F,
    \end{equation}
    where $|A|_F:=\left(\sum_{i,j=1}^d|a_{ij}|^2\right)^{\frac{1}{2}}$ denotes the Frobenius norm of $A$. This is a straightforward consequence of the Cauchy-Schwarz inequality directly.
\item The average $(2,1)$-norm is not sub-multiplicative in general, i.e. $|AB|_{2,1}\leq |A|_{2,1}|B|_{2,1}$ is not necessarily true for any $(d\times d)$-matrices $A$ and $B$. However, if $B\in O(d)$ or $U(d)$, we have
\begin{equation}\label{eq:normrotationinvariant}
|BA|_{2,1}=|A|_{2,1}.
\end{equation}
Note that in this case, $|B|_{2,1}=1$.
\end{enumerate}
\end{remark}


\begin{definition}[Frustration index]\label{defn:frustration index}
Let $(G,\mu,\sigma)$ be given. We define the frustration index $\iota^\sigma(S)$ for $\emptyset\neq S \subseteq V$ as
\begin{align*}
\iota^\sigma(S) :=& \min_{\tau: S \to  H}\sum_{\{x,y\}\in E_S}
w_{xy}|\sigma_{xy}\tau(y)-\tau(x)|_{2,1}\\
=&\min_{\tau: S \to  H}\sum_{\{x,y\}\in E_S}
w_{xy}|\sigma^\tau_{xy}-id|_{2,1},
\end{align*}
where $E_S$ is the edge set of the induced subgraph of $S$ in $G$.
\end{definition}
\begin{remark}\begin{enumerate}[(i)]
                \item By (\ref{eq:normrotationinvariant}), the quantity $$|\sigma_{xy}\tau(y)-\tau(x)|_{2,1}=|\sigma_{yx}\tau(x)-\tau(y)|_{2,1}$$ is independent of the orientation of the edge $\{x,y\}\in E$. Hence, the summation $\sum_{\{x,y\}\in E_S}
w_{xy}|\sigma_{xy}\tau(y)-\tau(x)|_{2,1}$ is well defined.
                \item In the definition of the frustration index, we are taking the infimum over all possible switching functions. Hence,
the frustration index is a switching invariant quantity.
                \item The average $(2,1)$-norm is only one possible choice which can be used in the definition of the frustration index. A more canonical norm to be used is the Frobenius norm. However, having the aim to present the strongest Buser type inequalities (\ref{eq:Buser}) in Section \ref{section:Buser}, we choose the average $(2,1)$-norm here (recall (\ref{eq:normwithFrobenius})).
              \end{enumerate}
\end{remark}

We denote by $|E(S, V\setminus S)|$ the boundary measure of $S\subseteq V$, which is given by
$$
|E(S, V\setminus S)|:=\sum_{x\in S}\sum_{y\not\in S}w_{xy}.
$$
In the above, we use the convention that $w_{xy}=0$ if $x\not\sim y$. The $\mu$-volume of $S$ is given by $$\mu(S):=\sum_{x\in S}\mu(x).$$

\begin{definition}\label{def:subpartition}
We call $k$ subsets $\{S_i\}_{i=1}^k$ of $V$ a \emph{nontrivial $k$-subpartition} of $V$, if all $S_i$ are nonempty and pairwise disjoint.
\end{definition}
Now we are prepared to define the Cheeger constant.
\begin{definition}[Cheeger constant]\label{defn:Cheeger} Let $(G, \mu, \sigma)$ be given.
The \emph{$k$-way Cheeger constant} $h_k^\sigma$ is defined as
\[
h_k^\sigma := \min_{\{S_i\}_{i=1}^k} \max_{1 \leq i \leq k} \phi^\sigma(S_i),
\]
where the minimum is taken over all possible nontrivial  $k$-subpartitions $\{S_i\}_{i=1}^k$ of $V$ and
\[
\phi^\sigma(S):=\frac{\iota^\sigma(S) + |E(S,V\setminus S)|}{\mu(S)}.
\]
\end{definition}
Note that the multi-way Cheeger constants defined above are switching invariant. Definition~\ref{defn:Cheeger} is a natural extension of the Cheeger constants developed in \cite{AtayLiu14,LLPP15}, and is related to the constants discussed in \cite{BSS13}.
\begin{remark}[Relations with Bandeira, Singer and Spielman's constants]\label{remark:BSS}
In \cite{BSS13}, Bandeira, Singer and Spielman introduced the so-called \emph{$O(d)$ frustration $\ell_1$ constant} $\nu_{G,1}$ as follows,
\begin{equation*}
\nu_{G,1}:=\min_{\tau: V\to O(d)}\frac{1}{\sqrt{d}\mu(V)}\sum_{x,y\in V}w_{xy}|\sigma_{xy}\tau(y)-\tau(x)|_F,
\end{equation*}
where $|\cdot|_F$ denotes the Frobenius norm of a matrix.
Modifying $\nu_{G,1}$ by also allowing zero matrices in the image of $\tau$, we obtain
\begin{equation*}
\nu^*_{G,1}:=\min_{\tau: V\to H\cup\{0\}}\frac{\sum_{x,y\in V}w_{xy}|\sigma_{xy}\tau(y)-\tau(x)|_F}{\sum_{x\in V}\mu(x)|\tau(x)|_F},
\end{equation*}
where we denote the $(d\times d)$-zero matrix by $0$. Note that $|\tau(x)|_F=\sqrt{d}$, for $\tau(x)\in H$.

We observe that the following relation between our Cheeger constant $h_1^\sigma$ and the constant $\nu^*_{G,1}$:
\begin{equation*}
h_1^\sigma\leq \frac{1}{2}\nu^*_{G,1},
\end{equation*}which is verified by the following calculation:
\begin{align*}
h_1^\sigma=&\min_{\emptyset\neq S\subseteq V}\frac{\iota^\sigma(S)+|E(S,V\setminus S)|}{\mu(S)}\\
=&\min_{\tau:V\to H\cup\{0\}}\frac{\sum_{\{x,y\}\in E}w_{xy}|\sigma_{xy}\tau(y)-\tau(x)|_{2,1}}{\sum_{x\in V}\mu(x)|\tau(x)|_{2,1}}\\
\leq&\min_{\tau:V\to H\cup\{0\}}\frac{\sum_{\{x,y\}\in E}w_{xy}|\sigma_{xy}\tau(y)-\tau(x)|_{F}}{\sqrt{d}\sum_{x\in V}\mu(x)|\tau(x)|_{2,1}}\\
=&\frac{1}{2}\nu^*_{G,1}.
\end{align*}
In the inequality above, we used (\ref{eq:normwithFrobenius}).
\end{remark}

For convenience, we call $\{S_i\}_{i=1}^k$ a \emph{connected}, nontrivial $k$-subpartition of $V$, if all sets $S_i\subseteq V$ are nonempty and pairwise disjoint, and if every subgraph induced by $S_i$ is connected. Then the Cheeger constants introduced in Definition \ref{defn:Cheeger} do not change if we restrict our considerations to connected, nontrivial $k$-subpartitions:

\begin{lemma}\label{lemma:connectedCheeger} Let $(G, \mu, \sigma)$ be given. Then we have
\[
h_k^\sigma = \min_{\{S_i\}_{i=1}^k} \max_{1 \leq i \leq k} \phi^\sigma(S_i),
\]
where the minimum is taken over all possible connected, nontrivial  $k$-subpartitions $\{S_i\}_{i=1}^k$ of $V$.
\end{lemma}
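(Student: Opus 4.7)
The plan is to show that for any nontrivial $k$-subpartition $\{S_i\}_{i=1}^k$ of $V$, one can extract a connected nontrivial $k$-subpartition $\{S_i^{\ast}\}_{i=1}^k$ with $S_i^\ast\subseteq S_i$ and
\[
 \max_{1\le i\le k}\phi^\sigma(S_i^\ast)\le \max_{1\le i\le k}\phi^\sigma(S_i).
\]
Since every connected nontrivial $k$-subpartition is in particular a nontrivial $k$-subpartition, this implies the equality of the two minima.

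For each $i$, decompose $S_i$ into the connected components $S_i=S_i^{(1)}\sqcup\cdots\sqcup S_i^{(m_i)}$ of its induced subgraph. The central observation is that, because there are no edges between different components within $S_i$, each of the three quantities appearing in $\phi^\sigma$ is additive over the components:
\begin{itemize}
\item $\mu(S_i)=\sum_{j=1}^{m_i}\mu(S_i^{(j)})$ trivially;
\item $|E(S_i,V\setminus S_i)|=\sum_{j=1}^{m_i}|E(S_i^{(j)},V\setminus S_i^{(j)})|$, since every edge leaving $S_i^{(j)}$ must leave $S_i$ altogether (otherwise it would connect $S_i^{(j)}$ to some $S_i^{(j')}$);
\item $\iota^\sigma(S_i)=\sum_{j=1}^{m_i}\iota^\sigma(S_i^{(j)})$, because the edge set $E_{S_i}$ is the disjoint union $\bigsqcup_j E_{S_i^{(j)}}$ and the frustration sum $\sum_{\{x,y\}\in E_S}w_{xy}|\sigma_{xy}\tau(y)-\tau(x)|_{2,1}$ then splits into independent summands, each of which can be minimised separately by choosing $\tau$ component-wise.
\end{itemize}

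Consequently, $\phi^\sigma(S_i)$ is the weighted mediant
\[
 \phi^\sigma(S_i)=\frac{\sum_{j=1}^{m_i}\bigl[\iota^\sigma(S_i^{(j)})+|E(S_i^{(j)},V\setminus S_i^{(j)})|\bigr]}{\sum_{j=1}^{m_i}\mu(S_i^{(j)})},
\]
and by the standard mediant inequality at least one component satisfies $\phi^\sigma(S_i^{(j_i)})\le \phi^\sigma(S_i)$. Pick such a $j_i$ and set $S_i^\ast:=S_i^{(j_i)}$. The sets $S_i^\ast$ are nonempty, pairwise disjoint (as $S_i^\ast\subseteq S_i$) and each is connected, and the inequality $\max_i\phi^\sigma(S_i^\ast)\le \max_i\phi^\sigma(S_i)$ holds by construction. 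This proves the lemma.

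The only real content is the additivity of $\iota^\sigma$ under decomposition into connected components; once that is established, the rest is a routine mediant argument, so no substantive obstacle is expected.
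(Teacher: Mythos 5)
Your proof is correct and follows essentially the same route as the paper: decompose each $S_i$ into connected components, observe that the numerator and denominator of $\phi^\sigma$ are additive over components, and use the mediant inequality to select a component with no larger $\phi^\sigma$. The only difference is that you spell out the additivity of the frustration index (via the component-wise decoupling of the minimization over $\tau$) explicitly, which the paper leaves implicit in the identity $\phi^\sigma(S_i)\mu(S_i)=\sum_j\phi^\sigma(W_i^j)\mu(W_i^j)$.
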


\begin{proof}
Let $\{S_i\}_{i=1}^k$ be a possibly nonconnected, nontrivial  $k$-subpartition achieving $h_k^\sigma$.
Suppose $S_i$ has the connected components $W^1_i, \ldots, W^{n(i)}_i$. Then,
\[
\phi^\sigma(S_i) \mu(S_i) = \sum_{j=1}^{n(i)} \phi^\sigma(W^j_i)  \mu(W^j_i)
\]
and $\mu(S_i) = \sum_{j=1}^n \mu(W^j_i)$. Hence, there exists $j(i)\in\{1,2,\ldots, n(i)\}$ such that $\phi^\sigma(W^{j(i)}_i) \leq \phi^\sigma(S_i)$.
Consequently,
\[
\max_{1 \leq i \leq k} \phi^\sigma(W^{j(i)}_i) \leq \max_{1 \leq i \leq k} \phi^\sigma(S_i) = h_k^\sigma,
\]
and thus, $\{W^{j(i)}_i\}_{i=1}^k$ is a connected, nontrivial  $k$-subpartition of $V$ with
\[
h_k^\sigma = \max_{1 \leq i \leq k} \phi^\sigma(W^{j(i)}_i)
\]
This implies the lemma.
\end{proof}

\subsection{Frustration index via spanning trees}\label{subsection:spanning tree} This subsection is motivated by the following question:
 Is there any easier way to calculate the frustration index $\iota^\sigma(S)$ of a subset $S\subseteq V$?
We will provide an affirmative answer in the case $H=U(1)$.

Note that the average $(2,1)$-norm reduces to the absolute value of a complex number, and  the frustration index $\iota^\sigma(S)$ for $S \subseteq V$ simplifies to
\[
\iota^\sigma(S) := \min_{\tau: S \to  U(1)}\sum_{\{x,y\}\in E_S}
w_{xy}|\sigma_{xy}\tau(y)-\tau(x)|,
\]
where $E_S$ is the edge set of the induced subgraph of $S$.
Here our aim is to make it easier to calculate $\iota^{\sigma}(S)$ by considering all spanning trees of the induced subgraph of $S$ and taking the minmum over so-called \emph{constant functions on these trees with respect to the signature}. This reduces the original minimization problem to a finite combinatorial problem. We will show in Appendix \ref{section:appendixSpanningTree} via a counterexample that this reduction is no longer possible in the case of higher dimensional signature groups.
\begin{definition}
Let $(G,\sigma)$ be a finite, connected graph the signature $\sigma: E^{or}\to H$. A function $\tau: V\to H$ is \emph{constant on $G$ with respect to $\sigma$} if, for all $(x,y) \in E^{or}$, we have
\[
\sigma_{xy}\tau(y) = \tau(x).
\]
In other words, $\tau$ is a switching function such that $\sigma^\tau$ is trivial, i.e., $\sigma^\tau_{xy}=id\in H$ for all $(x,y)\in E^{or}$.
\end{definition}

Let $T=(S, E_T)$, $E_T\subseteq E_S$, be a spanning tree of the induced subgraph of $S$. We write $C_T(S):=\{\tau:S \to U(1) : \tau \mbox{ is constant on } T
\mbox{ with respect to } \sigma\}$. Moreover, we define $\mathbb{T}_S$ as the set of all of all spanning trees of the induced subgraph of $S$.


Since $T$ is a tree, the set $C_T(S)$ is not empty. Since $T$ is a spanning tree, we have $C_T(S)= \tau U(1) :=\{\tau z: S\to U(1)\mid z\in U(1)\}$ for any $\tau \in C_T(S)$.

\begin{thm}\label{thm:spanning tree}
Let $S \subseteq V$ be a nonempty subset of $V$ which induces a connected subgraph. Then,
\begin{equation}\label{eq:spanning tree}
\iota^\sigma(S) = \min_{T \in \mathbb{T}_S}\sum_{\{x,y\}\in E_S}
w_{xy}|\sigma_{xy}\tau_T(y)-\tau_T(x)|,
\end{equation}
where $\tau_T$ denotes an arbitrary representative of $C_T(S)$.

Moreover, if a function $\tau:S\to U(1)$ satisfies $\sum_{\{x,y\}\in E_S}  w_{xy}|\sigma_{xy}\tau(y)-\tau(x)| = \iota^\sigma(S)$, then there is a spanning tree $T=(S, E_T)$ such that $\tau$ is constant on $T$ with respect to $\sigma$.
\end{thm}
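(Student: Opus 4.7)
The plan is to prove two inequalities; the easy direction is a one-liner, and the substantive content lies in the reverse inequality together with the ``moreover'' clause. The easy direction,
$\iota^\sigma(S)\leq \min_{T\in\mathbb{T}_S}\sum_{\{x,y\}\in E_S}w_{xy}|\sigma_{xy}\tau_T(y)-\tau_T(x)|$, is immediate because every $\tau_T\in C_T(S)$ is a valid switching function appearing in the defining infimum of $\iota^\sigma(S)$. For the rest, I plan to show: any minimizer $\tau^*\colon S\to U(1)$ of $\tau\mapsto \sum_{\{x,y\}\in E_S} w_{xy}|\sigma_{xy}\tau(y)-\tau(x)|$ lies in $C_T(S)$ for some spanning tree $T\in\mathbb{T}_S$. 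Existence of $\tau^*$ is clear from continuity of the functional and compactness of $U(1)^S$.

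The central object is the \emph{zero graph} $G_0=(S,E_0)$ with $E_0:=\{\{x,y\}\in E_S:\sigma_{xy}\tau^*(y)=\tau^*(x)\}$. If $G_0$ is connected, then any spanning tree $T$ of $G_0$ is simultaneously a spanning tree of the induced subgraph of $S$, and $\tau^*\in C_T(S)$ by definition of $E_0$; both conclusions of the theorem follow at once. I would therefore argue by contradiction and assume that $G_0$ has a connected component $C$ with $\varnothing\subsetneq C\subsetneq S$. Since the subgraph induced by $S$ is connected, there must exist a nonempty set $J$ of cross edges between $C$ and $S\setminus C$ in $E_S$, and no cross edge is in $E_0$.

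Now comes the deformation. For $\theta\in\R$, set $\tau_\theta(x):=e^{i\theta}\tau^*(x)$ for $x\in C$ and $\tau_\theta(x):=\tau^*(x)$ for $x\in S\setminus C$. Edges internal to $C$ or to $S\setminus C$ contribute the same as at $\theta=0$; collecting these into a constant $c$, one obtains
\[
F(\theta):=\sum_{\{x,y\}\in E_S}w_{xy}|\sigma_{xy}\tau_\theta(y)-\tau_\theta(x)|=c+\sum_{j\in J}w_j\,g_j(\theta),
\]
where a short computation shows $g_j(\theta)=|v_j-e^{i\theta}|$ for some $v_j\in U(1)$, and $v_j\neq 1$ precisely because $j\notin E_0$. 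Writing $v_j=e^{i\alpha_j}$ with $\alpha_j\not\equiv 0\pmod{2\pi}$, the identity $g_j(\theta)=2|\sin((\alpha_j-\theta)/2)|$ is smooth near $\theta=0$, and direct differentiation yields $g_j''(\theta)=-\tfrac{1}{4}g_j(\theta)$ on any smooth interval.

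Consequently $F$ is smooth at $0$ with
\[
F''(0)=-\tfrac{1}{4}\sum_{j\in J}w_j g_j(0)=-\tfrac{1}{4}\bigl(F(0)-c\bigr)<0,
\]
since each $g_j(0)=|v_j-1|>0$ and $J\neq\varnothing$. Thus $\theta=0$ is not a local minimum of $F$, contradicting the global minimality of $\tau^*$ along the one-parameter family $\{\tau_\theta\}$. Hence $G_0$ must be connected, which proves the theorem. The only delicate step is verifying the second-derivative identity $g_j''=-g_j/4$, which is the spectral content of the Laplacian on the circle; everything else is routine bookkeeping around the deformation. The reason the argument is restricted to $H=U(1)$ is exactly that this pointwise $g_j''=-g_j/4$ relation makes crucial use of the one-dimensional nature of the signature, and it breaks for higher-dimensional signature groups, consistent with the counterexample promised in Appendix~\ref{section:appendixSpanningTree}.
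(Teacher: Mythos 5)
Your proof is correct and follows essentially the same route as the paper: both arguments reduce to showing the ``zero graph'' $G_0$ of edges where the minimizer is constant with respect to $\sigma$ is connected, and both derive a contradiction by rotating the minimizer on a connected component and exploiting the strict concavity of $\theta\mapsto|v-e^{i\theta}|$ away from $v$. The only cosmetic difference is that the paper isolates this concavity as a standalone lemma (that $\min_{z\in U(1)}\sum_k w_k|z-z_k|$ is attained only at points of $\{z_k\}$), whereas you compute $F''(0)<0$ directly along the deformation.
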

We remark that in (\ref{eq:spanning tree}) we are taking the minimum over a finite set. Moreover, given a spanning tree $T\in \mathbb{T}_S$, only terms associated to edges of $E_S$ not belonging to the spanning tree contribute to the sum.

Theorem \ref{thm:spanning tree} can be considered an an extension of \cite[Theorem 2]{HararyKabell80}, where Harary
and Kabell derived this result on unweighted graphs for the case $H=O(1)=\{\pm 1\}$. Their proof depends in an essential way on the fact that the frustration index in their setting (which they called \emph{line index of balance}) only assumes integer values. Therefore, their proof cannot be extended to the current general setting.

We first prove a basic lemma.

\begin{lemma}\label{lemma:1dimgeometry}
Let $Z:= \{z_1,\ldots,z_n\} \subset U(1)$ and $w_1,\ldots,w_n > 0$. Then we have
\begin{equation}\label{eq:minsimp}
\min_{z\in U(1)} \sum_{k=1}^n w_k |z-z_k|   =   \min_{z\in Z} \sum_{k=1}^n w_k |z-z_k|.
\end{equation}
Moreover, if  $z \in U(1) \setminus Z$, then
\[
 \sum_{k=1}^n w_k |z-z_k|   >   \min_{z\in Z} \sum_{k=1}^n w_k |z-z_k|.
\]
\end{lemma}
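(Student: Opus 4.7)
The plan is to parameterize the circle by $z = e^{i\theta}$ and exploit the identity $|e^{i\theta} - e^{i\theta_k}| = 2|\sin((\theta-\theta_k)/2)|$ to reduce the lemma to a second-derivative argument. Define
\[
F(\theta) := \sum_{k=1}^n w_k|e^{i\theta}-z_k| = \sum_{k=1}^n 2w_k\left|\sin\!\left(\tfrac{\theta-\theta_k}{2}\right)\right|.
\]
Because $|\sin((\theta-\theta_k)/2)|$ has its only corner on $U(1)$ at $\theta = \theta_k$ (the other zero, $\theta_k + 2\pi$, being identified with $\theta_k$ on the circle), the function $F$ is smooth on the open set $U(1)\setminus Z$, and the only task is to understand its behavior there.

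The key computation I will carry out is that each summand $h_k(\theta):=|\sin((\theta-\theta_k)/2)|$ satisfies $h_k''(\theta) = -\tfrac{1}{4}h_k(\theta)$ on $U(1)\setminus\{z_k\}$, since locally $h_k = \pm\sin((\theta-\theta_k)/2)$ and the sign drops out after two differentiations. Weighting and summing gives the pointwise identity
\[
F''(\theta) = -\tfrac{1}{4}F(\theta) \qquad \text{for all } \theta \in U(1)\setminus Z,
\]
so $F'' \leq 0$ on $U(1)\setminus Z$, with strict negativity wherever $F > 0$.

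Given this identity, both claims of the lemma follow from a single contradiction. Let $z_* \in U(1)$ be any minimizer of $F$, which exists by continuity and compactness. I will show $z_* \in Z$ as follows: if not, then $F$ is smooth at $z_*$ and the second-derivative test at an interior minimum yields $F''(z_*) \geq 0$; combined with $F''(z_*) = -F(z_*)/4 \leq 0$ this forces $F(z_*) = 0$, and since all $w_k > 0$ this requires $|z_* - z_k| = 0$ for every $k$, so $z_* \in Z$, a contradiction. The reverse inequality $\min_{U(1)} F \leq \min_Z F$ is trivial since $Z\subset U(1)$, which proves \eqref{eq:minsimp}. The same contradiction applied to any candidate $z \in U(1)\setminus Z$ shows that no such $z$ can attain the minimum value, giving the strict inequality in the second assertion. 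The main (and essentially only) obstacle is carefully verifying the ODE $F'' = -F/4$ away from $Z$; once this is in hand, the rest is immediate.
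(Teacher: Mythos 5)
Your proof is correct and follows essentially the same route as the paper's: parameterize $U(1)$ by $\theta$, note that $F$ is smooth away from $Z$, and contradict the second-derivative test at a putative minimizer outside $Z$ using the strict concavity of each summand $|\sin((\theta-\theta_k)/2)|$ away from its corner. Your explicit ODE $F''=-F/4$ is just a slightly more systematic packaging of the paper's pointwise computation that each second derivative is strictly negative.
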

\begin{proof}
The minimum over on the left hand side of (\ref{eq:minsimp}) exists, since $U(1)$ is compact and $\sum_{k=1}^n w_k |z-z_k|$ is continuous in $z$. Suppose that the minimum is assumed in $z_0= e^{it_0}$ with $z_0 \notin Z$.
That is, the function $\phi: \R \to \R, t \mapsto \sum_{k=1}^n w_k |e^{it}-z_k|$ assumes a local minimum in $t_0$. Since $z_0 \notin Z$, the second derivative $\phi''$ exists at $t_0$ and is not negative due to the minimum property.
But for all $k \in \{1,\ldots,n\}$, we can set $z_k=e^{i t_k}$ and compute
\[
\frac {d^2} {dt^2}  |e^{it}-e^{i t_k}| (t_0) =  2 \frac {d^2} {dt^2}  \left| \sin \frac{t-t_k} 2 \right| (t_0) < 0.
\]
This is a contradiction and, hence, $z_0 \in Z$. This finishes the proof of the lemma.
\end{proof}

Now, we prove the theorem with the help of the lemma.
\begin{proof}[Proof of Theorem \ref{thm:spanning tree}]
First, we notice that the expression $w_{xy}|\sigma_{xy}\tau_T(y)-\tau_T(x)|$ does not depend on the choice of $\tau_T \in C_T(S)$ since $C_T(S)= \tau_TU(1)$.
Hence, the restriction to the representative of $C_T(S)$ make sense.

Let $\tau_0:S \to U(1)$ be a minimizer of
$\sum_{\{x,y\}\in E_S}
w_{xy}|\sigma_{xy}\tau(y)-\tau(x)|.$
Denote by
\[
E_0:= \{\{x,y\}\in E_S : \sigma_{xy}\tau_0(y) = \tau_0(x)  \}
\]
the set of edges where $\tau_0$ is constant with respect to $\sigma$.
It is sufficient to show that $G_0 = (S,E_0)$ is connected since then there is a spanning tree $T_0$ of $G_0$ such that $\tau_0$ is constant on $T_0$ with respect to $\sigma$.

Suppose $G_0$ is not connected. Then there is a connected component $W \subsetneq S$.
Denote $\partial_S W := \{(x,y)\in E^{or}_S : x \in W, y \in S \setminus W\}$. We have $\partial_S W \neq \emptyset$ since $S$ is connected. Moreover, we have $\sigma_{xy}\tau_0(y)  \neq \tau_0(x)$ for all $(x,y) \in \partial_S W$, since $W$ is a connected component and, otherwise, $y$ would also belong to $W$, contradicting to $(x,y) \in \partial_S W$.

The previous lemma states that
\[
\min_{z\in U(1)}\sum_{(x,y)\in \partial_S W }   w_{xy}|\sigma_{xy}\tau_0(y)- z \tau_0(x)| = \min_{z\in U(1)}\sum_{(x,y)\in \partial_S W }   w_{xy}|\sigma_{xy}\tau_0(y)\overline{\tau_0(x)} - z |
\]
achieves the minimum only in elements of the set
$\{  \sigma_{xy}\tau_0(y)\overline{\tau_0(x)} : (x,y) \in \partial_S W   \}$.
But $1 \notin \{  \sigma_{xy}\tau_0(y)\overline{\tau_0(x)} : (x,y) \in \partial_S W   \}$, since $\sigma_{xy}\tau_0(y)  \neq \tau_0(x)$ for all $(x,y) \in \partial_S W$.
Hence, there exists $z_0\in U(1)$ such that
\begin{equation}
    \sum_{(x,y)\in \partial_S W }   w_{xy}|\sigma_{xy}\tau_0(y)- z_0 \tau_0(x)|
<   \sum_{(x,y)\in \partial_S W }   w_{xy}|\sigma_{xy}\tau_0(y)-  \tau_0(x)|.    \label{tau1 tau0}
\end{equation}
We define $\tau_1: S\to U(1)$,
\[
\tau_1(x) := \begin{cases}
							z_0 \tau_0(x),  & \mbox{ if } x \in W; \\
							\tau_0(x),    & \mbox{ if }x \in S\setminus	W.
						 \end{cases}			
\]
Consequently,
\begin{eqnarray*}
   &&  \sum_{\{x,y\}\in E_S}  w_{xy}|\sigma_{xy}\tau_1(y)-\tau_1(x)| \\
&=&  \sum_{\{x,y\}\in E_W \cup E_{S\setminus W}}  w_{xy}|\sigma_{xy}\tau_1(y)-\tau_1(x)|  +  \sum_{(x,y)\in \partial_S W}  w_{xy}|\sigma_{xy}\tau_1(y)-\tau_1(x)|   \\
&=&  \sum_{\{x,y\}\in E_W \cup E_{S\setminus W}}  w_{xy}|\sigma_{xy}\tau_0(y)-\tau_0(x)| 	+  \sum_{(x,y)\in \partial_S W}  w_{xy}|\sigma_{xy}\tau_0(y)- z_0 \tau_0(x)|	\\
&\stackrel{(\ref{tau1 tau0})}{<}&  \sum_{\{x,y\}\in E_W \cup E_{S\setminus W}}  w_{xy}|\sigma_{xy}\tau_0(y)-\tau_0(x)| 	+  \sum_{(x,y)\in \partial_S W}  w_{xy}|\sigma_{xy}\tau_0(y)-  \tau_0(x)|	\\
&=&  \sum_{\{x,y\}\in E_S}  w_{xy}|\sigma_{xy}\tau_0(y)-\tau_0(x)|.
\end{eqnarray*}

This is a contradiction to the fact that $\tau_0$ is a minimizer of $\sum_{\{x,y\}\in E_S}
w_{xy}|\sigma_{xy}\tau(y)-~\tau(x)|$. Thus, $G_0$ has to be connected. This finishes the proof.
\end{proof}

Recall from Lemma \ref{lemma:connectedCheeger} that the Cheeger constant $h_k^\sigma$ is the minimum of $\max_{1\leq i\leq k}\phi^{\sigma}(S_i)$ over
all possible \emph{connected}, nontrivial $k$-subpartitions $\{S_i\}_{i=1}^k$. Therefore, Theorem \ref{thm:spanning tree} implies that the calculation of $h_k^\sigma$ reduces to a
finite combinatorial minimization problem if $H=U(1)$.

\section{Buser inequalities}\label{section:Buser}\label{section:BusIne}
%
In this section, we prove our main theorem, namely, higher order Buser type inequalities for nonnegatively curved graphs (cf. Theorem \ref{thm:introMain} in the Introduction).
\begin{thm}[Main Theorem]\label{thm:Buser}
  Let $(G,\mu, \sigma)$ satisfy $CD^\sigma(0,\infty)$. Then for all $1\leq k\leq N$, we
  have
  \begin{equation}\label{eq:Buser}
       \sqrt{\lambda^\sigma_{kd}}\leq 4 \sqrt{D_G^{nor}}  \left(kd \sqrt{\log(2kd)}\right)   h_k^\sigma.
  \end{equation}
\end{thm}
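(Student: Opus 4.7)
The plan is to adapt Ledoux's \cite{Ledoux04} heat-semigroup proof of Buser's inequality to the higher-order, signed setting, combined with a $d$-dimensional frame construction. By the Courant--Fischer min--max principle applied to the self-adjoint operator $-\Delta^\sigma$ on $\ell^2(V,\mathbb{K}^d;\mu)$, it is enough to exhibit a $kd$-dimensional subspace $W \subseteq \ell^2(V,\mathbb{K}^d;\mu)$ with
\[
\max_{0\neq f\in W}\frac{\langle -\Delta^\sigma f,f\rangle_\mu}{\|f\|_{2,\mu}^2}\ \leq\ 16\,D_G^{nor}\,(kd)^2\log(2kd)\,(h_k^\sigma)^2.
\]

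To build $W$, apply Lemma \ref{lemma:connectedCheeger} to pick a connected nontrivial $k$-subpartition $\{S_i\}_{i=1}^k$ realising $h_k^\sigma=\max_i \phi^\sigma(S_i)$, and for each $i$ a near-optimal switching function $\tau_i:S_i\to H$ witnessing $\iota^\sigma(S_i)$ in Definition \ref{defn:frustration index}. Extending $\tau_i$ by zero outside $S_i$ and letting $\{e_j\}_{j=1}^d$ be the standard basis of $\mathbb{K}^d$, set
\[
f_i^j(x)\;:=\;\chi_{S_i}(x)\,\tau_i(x)\,e_j\in\mathbb{K}^d,\qquad 1\leq i\leq k,\ 1\leq j\leq d.
\]
Unitarity of $\tau_i(x)$ makes $\{f_i^j\}$ orthogonal in $\ell^2(V,\mathbb{K}^d;\mu)$ with $\|f_i^j\|_{2,\mu}^2=\mu(S_i)$, and an edge-by-edge computation combined with the elementary bound $|A|_F^2\leq 2d\,|A|_{2,1}$, valid for $A=\sigma_{xy}\tau_i(y)-\tau_i(x)$ because each of its columns has length at most $2$, yields the aggregated energy estimate
\[
\sum_{j=1}^d\langle -\Delta^\sigma f_i^j,f_i^j\rangle_\mu\ \leq\ 2d\bigl(\iota^\sigma(S_i)+|E(S_i,V\setminus S_i)|\bigr)\ =\ 2d\,\phi^\sigma(S_i)\,\mu(S_i)\ \leq\ 2d\,h_k^\sigma\,\mu(S_i).
\]
Used directly, the $f_i^j$ give only a Cheeger-type bound $\lambda_{kd}^\sigma\lesssim h_k^\sigma$; the Buser improvement to $(h_k^\sigma)^2$ must come from heat smoothing. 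So set $g_i^j:=P_t^\sigma f_i^j$ and take $W:=\mathrm{span}\{g_i^j\}$ for a parameter $t>0$ chosen below.

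The two key inputs from $CD^\sigma(0,\infty)$ are, first, the $L^2$--$L^\infty$ inequality of Theorem \ref{thm:curvature-characterization}(iii) with $K=0$,
\[
2t\,\Gamma^\sigma(P_t^\sigma f)(x)\ \leq\ P_t(|f|^2)(x)-|P_t^\sigma f(x)|^2\ \leq\ \|f\|_\infty^2,
\]
yielding the pointwise Bakry--\'Emery gradient bound $\Gamma^\sigma(P_t^\sigma f)\leq \|f\|_\infty^2/(2t)$; and, second, the contraction $|P_t^\sigma f|\leq P_t|f|$, which follows from a Feynman--Kac path expansion of $P_t^\sigma$ together with unitarity of $\sigma$. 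Combined with the standard identity
\[
\|f\|_{2,\mu}^2-\|P_t^\sigma f\|_{2,\mu}^2\;=\;2\int_0^t\langle -\Delta^\sigma P_s^\sigma f,\,P_s^\sigma f\rangle_\mu\,ds
\]
and the fact that the integrand is nonincreasing in $s$, these inputs will allow one to bound, for every $f=\sum_{i,j}c_{i,j}g_i^j\in W$, the Dirichlet energy $\langle -\Delta^\sigma f,f\rangle_\mu$ from above and the norm $\|f\|_{2,\mu}^2$ from below.

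The hard part will be the calibration of $t$, which is also the origin of the $\log(2kd)$ factor. On the one hand, the Bakry--\'Emery bound pushes toward large $t$, since it forces $\langle -\Delta^\sigma g_i^j,g_i^j\rangle_\mu \leq \mu(S_i)/(2t)$; on the other hand, the off-diagonal entries of the $kd\times kd$ Gram matrix of $\{g_i^j\}$ grow with $t$, because $P_t^\sigma$ spreads the supports of $f_i^j$ across the partition boundaries, threatening both the $kd$-dimensionality of $W$ and its conditioning. Equalising these two opposing effects, while absorbing the boundary loss into $h_k^\sigma$ via the energy estimate above (where the frustration index $\iota^\sigma(S_i)$ contributes on the same footing as $|E(S_i,V\setminus S_i)|$), is what produces the combined factor $(kd)^2\log(2kd)$. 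Controlling the Gram matrix uniformly over $k$ pieces and $d$ frame directions, without recourse to the sign-flipping tricks that simplify the $H=O(1)$ case treated in Section \ref{section:O(1)}, is the most delicate step; the prefactor $D_G^{nor}$ enters when converting the pointwise $P_t$-estimates into the $L^2$ bounds demanded by the Rayleigh quotient.
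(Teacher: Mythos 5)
Your test functions $f_i^j=\chi_{S_i}\tau_i e_j$ and the aggregated energy estimate are essentially the paper's (its $\tau_i^l$), and the overall Ledoux philosophy is right, but the mechanism you propose for upgrading the linear (Cheeger-type) dependence on $h_k^\sigma$ to the quadratic (Buser-type) one is not actually present in your sketch. You plan to control $\|f_i^j\|_{2,\mu}^2-\|P_t^\sigma f_i^j\|_{2,\mu}^2$ through the identity $\|f\|_{2,\mu}^2-\|P_t^\sigma f\|_{2,\mu}^2=2\int_0^t\langle-\Delta^\sigma P_s^\sigma f,P_s^\sigma f\rangle_\mu\,ds$ with a nonincreasing integrand; this only yields a loss of order $t\cdot\langle-\Delta^\sigma f_i^j,f_i^j\rangle_\mu\lesssim t\,h_k^\sigma\mu(S_i)$, which is \emph{linear} in $t$. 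Keeping the Gram matrix nondegenerate then forces $t\lesssim 1/h_k^\sigma$, and with the energy bound $\langle-\Delta^\sigma P_t^\sigma f,P_t^\sigma f\rangle_\mu\le\|f\|_{2,\mu}^2/(2et)$ the Rayleigh quotient comes out of order $h_k^\sigma$, not $(h_k^\sigma)^2$. The step that actually produces the square is the $L^1$--$L^\infty$ duality: one writes $\|f\|_{2,\mu}^2-\|P_{t/2}^\sigma f\|_{2,\mu}^2=\mathrm{Re}\langle f,f-P_t^\sigma f\rangle_\mu\le\|f\|_\infty\|f-P_t^\sigma f\|_{1,\mu}$ and proves, by applying the pointwise gradient bound $\Gamma^\sigma(P_s^\sigma g)\le\|g\|_\infty^2/(2s)$ to the \emph{dual} function $g=(f-P_t^\sigma f)/|f-P_t^\sigma f|$ rather than to $f$ itself, the estimate $\|f-P_t^\sigma f\|_{1,\mu}\le\sqrt{2t}\,\|\sqrt{\Gamma^\sigma(f)}\|_{1,\mu}$ (Lemma \ref{lemma:lonenorm}); the $\sqrt{t}$ here, paired with the $L^1$ gradient bound $\|\sqrt{\Gamma^\sigma(f_i^j)}\|_{1,\mu}\lesssim\sqrt{D_G^{nor}}\,\phi^\sigma(S_i)\mu(S_i)$ of Lemma \ref{lemma:sqrtGamma}, is what lets $t$ be taken of order $1/\big(D_G^{nor}(kd)^2(h_k^\sigma)^2\big)$. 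You have the pointwise gradient bound in hand but never perform this dualization, so as written the calibration of $t$ cannot deliver the claimed inequality.

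Beyond that, your route differs structurally from the paper's in a way that creates additional unexecuted work: you need the full $kd\times kd$ Gram matrix of $\{P_t^\sigma f_i^j\}$ to be well conditioned (uniformly over possibly very unequal $\mu(S_i)$) to invoke Courant--Fischer, whereas the paper sidesteps this entirely by expanding each $\tau_i^l$ in the eigenbasis of $-\Delta^\sigma$ and using a pigeonhole argument: orthonormality of $\{\tau_i^l/\sqrt{\mu(S_i)}\}$ forces $\sum_{i,l}\sum_{n<kd}|\alpha_{i,n}^l|^2/\mu(S_i)\le kd-1$, hence some single $i_0$ has low-frequency mass at most $1-\tfrac1{kd}$, which is all that is needed. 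Finally, the domination $|P_t^\sigma f|\le P_t|f|$ you invoke via a Feynman--Kac expansion is true but is asserted without proof and is not needed: the paper only uses $P_s(|g|^2)\le\||g|^2\|_\infty$ from Proposition \ref{pro:heatsemigroupunsigned}. If you want to salvage the min-max route, you must (a) prove the $L^1$ lemma by the duality above, and (b) carry out the Gram-matrix estimate, e.g.\ bounding the off-diagonal entries by $|\langle\hat f_i^j,(P_{2t}^\sigma-\mathrm{I})\hat f_{i'}^{j'}\rangle_\mu|\le\|\hat f_i^j\|_\infty\|(P_{2t}^\sigma-\mathrm{I})\hat f_{i'}^{j'}\|_{1,\mu}$ with the normalization chosen symmetrically in $i,i'$; neither step is in your proposal.
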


Before we present the proof, we first discuss the following two lemmata.
We will use the following notation for the $\ell^{p}(V,\mathbb{K}^d;\mu)$ norm of functions, $1\leq p\leq \infty$,
$$\Vert f\Vert_{p,\mu}:=\left(\sum_{x\in V}\mu(x)|f(x)|^p\right)^{\frac{1}{p}}.$$
For simplicity, we omit the subscript $\mu$ in the following arguments.
\begin{lemma}\label{lemma:lonenorm}
Assume that $(G,\mu, \sigma)$ satisfies $CD^\sigma(0,\infty)$. Then for any function $f: V\to \mathbb{K}^d$ and $t\geq 0$,
we have
  \begin{equation}\label{eq:lonenorm}
    \Vert f-P^\sigma_tf\Vert_1\leq \sqrt{2t}\Vert\sqrt{\Gamma^\sigma(f)}\Vert_1.
  \end{equation}
\end{lemma}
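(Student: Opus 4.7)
The plan is to adapt Ledoux's duality argument to the vector-valued setting, following the standard strategy used for the scalar case in \cite{KKRT15}. By the duality between $\ell^1(V,\mathbb{K}^d;\mu)$ and $\ell^\infty(V,\mathbb{K}^d;\mu)$,
\[
\|f - P_t^\sigma f\|_1 = \sup_{\phi} \mathrm{Re}\,\langle f - P_t^\sigma f, \phi\rangle_\mu,
\]
where the supremum is over all test vector fields $\phi: V\to \mathbb{K}^d$ with $|\phi(x)|\leq 1$ for every $x\in V$. Writing $P_t^\sigma f - f = \int_0^t \Delta^\sigma P_s^\sigma f\,ds$, then combining the self-adjointness of $\Delta^\sigma$, the self-adjointness and commutativity of $P_s^\sigma$ from Proposition \ref{pro:heatsemigroup}, and the integration-by-parts identity (\ref{eq:summaBypart}), I would move everything onto the test field to obtain
\[
\langle P_t^\sigma f - f, \phi\rangle_\mu \;=\; \int_0^t \langle f, \Delta^\sigma P_s^\sigma \phi\rangle_\mu\,ds \;=\; -\int_0^t \sum_{x\in V}\mu(x)\,\Gamma^\sigma(f, P_s^\sigma \phi)(x)\,ds.
\]

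Next I would apply the pointwise Cauchy--Schwarz inequality from Proposition \ref{pro:Gammasigma}(ii) to bound $|\Gamma^\sigma(f, P_s^\sigma\phi)(x)| \leq \sqrt{\Gamma^\sigma(f)(x)}\sqrt{\Gamma^\sigma(P_s^\sigma\phi)(x)}$. The decisive step is to control $\Gamma^\sigma(P_s^\sigma\phi)$ uniformly using the $CD^\sigma(0,\infty)$ hypothesis. Applying Theorem \ref{thm:curvature-characterization}(\ref{eq:BEgradient2}) with $K=0$ to $\phi$ gives
\[
\Gamma^\sigma(P_s^\sigma \phi)(x) \;\leq\; \frac{1}{2s}\bigl(P_s(|\phi|^2)(x) - |P_s^\sigma\phi(x)|^2\bigr) \;\leq\; \frac{1}{2s}P_s(|\phi|^2)(x).
\]
Since $|\phi|^2 \leq 1$, Proposition \ref{pro:heatsemigroupunsigned} (positivity of the kernel together with $P_s 1 = 1$) yields $P_s(|\phi|^2)(x)\leq 1$, so $\sqrt{\Gamma^\sigma(P_s^\sigma\phi)(x)}\leq 1/\sqrt{2s}$ uniformly in $x$ and in $\phi$ with $\|\phi\|_\infty\leq 1$.

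Assembling these estimates,
\[
|\langle P_t^\sigma f - f, \phi\rangle_\mu| \;\leq\; \int_0^t \frac{1}{\sqrt{2s}}\,\|\sqrt{\Gamma^\sigma(f)}\|_1\,ds \;=\; \sqrt{2t}\,\|\sqrt{\Gamma^\sigma(f)}\|_1,
\]
and taking the supremum over $\phi$ gives (\ref{eq:lonenorm}). The main conceptual obstacle is that $\sqrt{\Gamma^\sigma(\phi)}$ cannot itself be bounded by $\|\phi\|_\infty$ without invoking graph geometry (vertex degrees would appear). The remedy — and the whole point of Ledoux's trick — is to first regularize $\phi$ by the semigroup: the Bakry--Émery type gradient estimate supplied by $CD^\sigma(0,\infty)$ then gives the sharp dimension-free $1/\sqrt{2s}$ control, and the singularity at $s=0$ is tame enough for $\int_0^t s^{-1/2}ds$ to converge, producing precisely the $\sqrt{2t}$ factor.
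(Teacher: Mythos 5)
Your proposal is correct and follows essentially the same route as the paper's proof: the paper simply instantiates your dual test field $\phi$ explicitly as $g(x)=(f(x)-P_t^\sigma f(x))/|f(x)-P_t^\sigma f(x)|$ rather than taking a supremum, and otherwise uses the identical chain of the summation-by-parts formula, the pointwise Cauchy--Schwarz bound for $\Gamma^\sigma$, the $K=0$ case of Theorem \ref{thm:curvature-characterization}(\ref{eq:BEgradient2}) to get the $1/\sqrt{2s}$ gradient bound on $P_s^\sigma\phi$, the positivity and stochastic completeness of the scalar semigroup $P_s$, and the integral $\int_0^t s^{-1/2}\,ds$. No gaps.
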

\begin{proof}
First, the equivalent formulation of the $CD^{\sigma}(0,\infty)$ inequality in Theorem \ref{thm:curvature-characterization} (\ref{eq:BEgradient2}) implies that
\begin{equation}\label{eq:linfty}
\Vert \sqrt{P_t(|f|^2)}\Vert_{\infty}\geq \sqrt{2t}\Vert \sqrt{\Gamma^\sigma(P^\sigma_tf)}\Vert_{\infty}.
\end{equation}
The inequality (\ref{eq:lonenorm}) is actually a dual version of the above one.
We set $$g(x):=\left\{
              \begin{array}{ll}
                0, & \hbox{if $f(x)-P_t^\sigma f(x)=0$;} \\
                (f(x)-P_t^\sigma f(x))/|f(x)-P_t^\sigma f(x)|, & \hbox{otherwise,}
              \end{array}
            \right.
$$
and calculate
\begin{align*}
 \Vert f-P^\sigma_tf\Vert_1&=\langle f-P_t^\sigma f, g\rangle_{\mu}=\langle -\int_0^t\frac{\partial}{\partial s}P_s^\sigma fds, g\rangle_{\mu}\\
 &=-\int_0^t\langle \Delta^\sigma f, P_s^{\sigma}g\rangle_{\mu}ds\\
 &=\int_0^t\sum_{x\in V}\mu(x)\Gamma^\sigma(f, P_s^\sigma g)(x)ds,
\end{align*}
where we used the self-adjointness of $P_t^\sigma$ and the summation by part formula (\ref{eq:summaBypart}). We further apply Proposition \ref{pro:Gammasigma} and the estimate (\ref{eq:linfty}) to derive
\begin{align*}
 \Vert f-P^\sigma_tf\Vert_1&\leq \int_0^t\sum_{x\in V}\mu(x)\sqrt{\Gamma^\sigma(f)(x)}\sqrt{\Gamma^\sigma(P_s^\sigma g)(x)}ds\\
 &\leq \int_0^t\Vert\sqrt{\Gamma^\sigma(f)}\Vert_1\Vert\sqrt{\Gamma^\sigma(P_s^\sigma g)}\Vert_{\infty}ds\\
 &\leq \Vert\sqrt{\Gamma^\sigma(f)}\Vert_1\int_0^t\frac{1}{\sqrt{2s}}\Vert \sqrt{P_s(|g|^2)}\Vert_{\infty}ds\\
 &\leq \sqrt{2t}\Vert\sqrt{\Gamma^\sigma(f)}\Vert_1.
\end{align*}
In the last inequality we used the fact $P_s(|g|^2)\leq \Vert|g|^2\Vert_{\infty}=1$, which follows from Proposition \ref{pro:heatsemigroupunsigned}.
\end{proof}

We still need the following technical lemma.
\begin{lemma}\label{lemma:sqrtGamma}
For any function $f: V\to \mathbb{K}^d$, we have
\begin{equation}\label{eq:sqrtGamma}
\Vert\sqrt{\Gamma^\sigma(f)}\Vert_1\leq \sqrt{2D_G^{nor}}\sum_{\{x,y\}\in E}w_{xy}|\sigma_{xy}f(y)-f(x)|
\end{equation}
\end{lemma}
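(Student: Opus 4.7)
The plan is to reduce the inequality to a pointwise estimate at each vertex $x$, and then use subadditivity of the square root together with the definition of $D_G^{nor}$.

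First, using Proposition \ref{pro:Gammasigma}(i), I would rewrite $\sqrt{\Gamma^\sigma(f)(x)}$ as
$$\sqrt{\Gamma^\sigma(f)(x)} = \frac{1}{\sqrt{2\mu(x)}} \left(\sum_{y \sim x} w_{xy}\,|\sigma_{xy}f(y)-f(x)|^2\right)^{1/2}.$$
The fundamental observation is the subadditivity of the square root, namely $\bigl(\sum_i b_i\bigr)^{1/2} \leq \sum_i \sqrt{b_i}$ for nonnegative $b_i$. Applying this with $b_y = w_{xy}|\sigma_{xy}f(y)-f(x)|^2$ gives
$$\sqrt{\Gamma^\sigma(f)(x)} \leq \frac{1}{\sqrt{2\mu(x)}} \sum_{y\sim x} \sqrt{w_{xy}}\,|\sigma_{xy}f(y)-f(x)|.$$

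Next, I multiply through by $\mu(x)$ and rewrite $\sqrt{\mu(x)\,w_{xy}/2}$ as $\sqrt{\mu(x)/w_{xy}}\cdot w_{xy}/\sqrt{2}$. By the definition of $D_G^{nor}$ in (\ref{eq:degrees}), we have $\mu(x)/w_{xy} \leq D_G^{nor}$ for every edge $\{x,y\}\in E$, so
$$\mu(x)\sqrt{\Gamma^\sigma(f)(x)} \leq \sqrt{\frac{D_G^{nor}}{2}}\sum_{y\sim x} w_{xy}\,|\sigma_{xy}f(y)-f(x)|.$$

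Finally, I sum over $x \in V$ and use that each undirected edge $\{x,y\}\in E$ is counted twice in the resulting double sum, yielding
$$\|\sqrt{\Gamma^\sigma(f)}\|_1 \leq \sqrt{\frac{D_G^{nor}}{2}}\cdot 2 \sum_{\{x,y\}\in E} w_{xy}\,|\sigma_{xy}f(y)-f(x)| = \sqrt{2D_G^{nor}}\sum_{\{x,y\}\in E} w_{xy}\,|\sigma_{xy}f(y)-f(x)|,$$
which is (\ref{eq:sqrtGamma}). There is no real obstacle: the argument is purely elementary and requires no appeal to the curvature hypothesis $CD^\sigma(0,\infty)$; the only mildly nontrivial step is recognising that plain subadditivity of the square root (rather than, say, Cauchy--Schwarz, which goes in the wrong direction here) is the correct tool to convert the $\ell^2$-type expression inside $\Gamma^\sigma$ into an $\ell^1$-type sum on the right-hand side.
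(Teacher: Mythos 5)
Your proof is correct and follows essentially the same route as the paper's: rewrite $\Gamma^\sigma(f)(x)$ via Proposition \ref{pro:Gammasigma}(i), apply subadditivity of the square root, bound $\mu(x)/w_{xy}$ by $D_G^{nor}$, and pick up the factor $2$ from the double-counting of edges. No discrepancies to report.
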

\begin{proof}
It is straightforward to calculate
  \begin{align*}
    \Vert\sqrt{\Gamma^\sigma(f)}\Vert_1&=\sum_{x\in V}\mu(x)\sqrt{\frac{1}{2\mu(x)}\sum_{y,y\sim x}w_{xy}\left|\sigma_{xy}f(y)-f(x)\right|^2}\\
    &\leq \sum_{x\in V}\sqrt{\frac{\mu(x)}{2}}\sum_{y,y\sim x}\sqrt{w_{xy}}\left|\sigma_{xy}f(y)-f(x)\right|\\
    &\leq \sqrt{\frac{D_G^{nor}}{2}}\sum_{x\in V}\sum_{y,y\sim x}w_{xy}\left|\sigma_{xy}f(y)-f(x)\right|.
  \end{align*}
This simplifies to (\ref{eq:sqrtGamma}), since the summands above are symmetric w.r.t. $x$ and $y$.
\end{proof}

Now, we have all ingredients for the proof of the Buser type inequality (\ref{eq:Buser}).

\begin{proof}[Proof of Theorem~\ref{thm:Buser}]
Let $\{S_i\}_{i=1}^k$ be an arbitrary nontrivial $k$-subpartition of $V$. For each $S_i$, let $\tau_i: S_i\to H$ be the function achieving the values $\iota^\sigma(S_i)$ introduced in Definition \ref{defn:frustration index}. We extend each $\tau_i$ trivially to a function on $V$, by assigning zero matrices to the vertices in $V\setminus S_i$. By abuse of notation, we denote this extension, again, by $\tau_i: V\to H$. Each $\tau_i$ gives rise to $d$ pairwise orthogonal functions in $\ell^2(V, \mathbb{K}^d;\mu)$:
\begin{equation}
\tau_i^l: V\to \mathbb{K}^{d},\,\,\,x\mapsto \left(\tau_i(x)\right)^l, \,\,l=1,2,\ldots, d,
\end{equation}
where $\left(\tau_i(x)\right)^l$ denotes for the $l$-th column vector of the matrix $\tau_i(x)$. Note that for $x\in S$, we have $|\tau_i^l(x)|=1$.

For every $1\leq i\leq k$, we apply Lemma \ref{lemma:sqrtGamma} to obtain
\begin{align}
&\frac 1 d \sum_{l=1}^d
\Vert\sqrt{\Gamma^\sigma(\tau_i^l)}\Vert_1  \notag\\
\leq & \frac 1 d \sum_{l=1}^d   \sqrt{2D_G^{nor}}\left(\sum_{\{x,y\}\in E_{S_i}}w_{xy}|\sigma_{xy}\tau_i^l(y)-\tau_i^l(x)|+|E(S_i,V\setminus S_i)|\right)  \notag\\
\leq &\sqrt{2D_G^{nor}}(\iota^\sigma(S_i)+|E(S_i,V\setminus S_i)|).\label{eq:B-tobecollone}
\end{align}
On the other hand, we have by Lemma \ref{lemma:lonenorm},
\begin{align}
\sqrt{2t}\Vert\sqrt{\Gamma^\sigma(\tau_i^l)}\Vert_1\geq & \sum_{x\in V}\mu(x)\left|\tau_i^l(x)-P_t^\sigma\tau_i^l(x)\right|\notag\\
\geq &\sum_{x\in V}\mu(x)\left|\tau_i^l(x)-P_t^\sigma\tau_i^l(x)\right|\cdot|\tau_i^l(x)|\notag\\
\geq & \sum_{x\in V}\mu(x)\mathrm{Re}\left((\tau_i^l(x))^T(\overline{\tau_i^l(x)-P_t^\sigma\tau_i^l(x)})\right),\notag
\end{align}
where $\mathrm{Re}(\cdot)$ denotes the real part of a complex number, and we used the Cauchy-Schwarz inequality in the last inequality. By Proposition \ref{pro:heatsemigroup}, we continue to calculate
\begin{align}\label{eq:B-tobecolltwo}
\sqrt{2t}\Vert\sqrt{\Gamma^\sigma(\tau_i^l)}\Vert_1\geq  \mathrm{Re}\left( \langle \tau_i^l, \tau_i^l-P_t^\sigma\tau_i^l \rangle_{\mu}  \right)=\Vert\tau_i^l\Vert_2^2-\Vert P_{t/2}^\sigma\tau_i^l\Vert_2^2.
\end{align}
Let $\{\psi_n\}_{n=1}^{Nd}$ be an orthonormal basis of $\ell^2(V, \mathbb{K}^d; \mu)$ consisting of the eigenfunctions corresponding to $\{\lambda^\sigma_n\}_{n=1}^{Nd}$, respectively. Setting
$$\alpha_{i,n}^l:=\langle \tau_i^l, \psi_n\rangle_{\mu},$$
we have
\begin{equation}\label{eq:B-tobecollthree}
\sum_ {n=1}^{Nd}  \left| \alpha_{i,n}^l \right|^2 =\Vert\tau_i^l\Vert^2_2= \mu(S_i),
\end{equation}
and
\begin{equation}\label{eq:B-tobecollfour}
\Vert P_{t/2}^\sigma\tau_i^l\Vert_2^2=\sum_ {n=1}^{Nd}e^{-t\lambda^\sigma_n}\left| \alpha_{i,n}^l \right|^2.
\end{equation}
Now (\ref{eq:B-tobecollone}), (\ref{eq:B-tobecolltwo}), (\ref{eq:B-tobecollthree}), and (\ref{eq:B-tobecollfour}) together imply, for each $1\leq i\leq k$,
\begin{align}\label{eq:phi-lambda}
                                2\sqrt{D_G^{nor}t}\phi^{\sigma}(S_i)
\geq&   \frac 1 d \sum_{l=1}^d   \left(    1-\sum_ {n=1}^{Nd}e^{-t\lambda^\sigma_n}\frac{\left| \alpha_{i,n}^l \right|^2}{\mu(S_i)}   \right) \notag  \\
\geq&\frac 1 d \sum_{l=1}^d   \left(    1-\sum_ {n=1}^{kd-1}\frac{\left| \alpha_{i,n}^l \right|^2}{\mu(S_i)}-e^{-t\lambda_{kd}^\sigma}\sum_{n=kd}^{Nd}\frac{\left| \alpha_{i,n}^l \right|^2}{\mu(S_i)} \right) \notag  \\
\geq&    1-  \frac 1 d \sum_{l=1}^d \sum_{n=1}^{kd-1}\frac{\left| \alpha_{i,n}^l \right|^2}{\mu(S_i)}   -    e^{-t\lambda_{kd}^\sigma}.
\end{align}
By (\ref{eq:B-tobecollthree}), we know
\begin{equation*}
1-  \frac 1 d \sum_{l=1}^d \sum_{n=1}^{kd-1}\frac{\left| \alpha_{i,n}^l \right|^2}{\mu(S_i)}\geq 0,
\end{equation*}
but our aim is to show that for some $i_0\in \{1,2,\ldots,k\}$ this expression is strictly positive. We rewrite the summands as follows,
\begin{equation}
\frac{\left| \alpha_{i,n}^l \right|^2}{\mu(S_i)}=\left|\left\langle\frac{\tau_i^l}{\sqrt{\mu(S_i)}}, \psi_n\right\rangle\right|^2.
\end{equation}
Since the functions $\tau_i^l/\sqrt{\mu(S_i)}, i=1,2,\ldots, k, l=1,2,\ldots,d,$ are orthonormal in the space $\ell^2(V, \mathbb{K}^d;\mu)$, we obtain
\begin{equation}
\sum_{i=1}^k\sum_{l=1}^d\frac{\left| \alpha_{i,n}^l \right|^2}{\mu(S_i)}\leq \Vert\psi_n\Vert_2^2=1.
\end{equation}
Summation over $n$ yields
\begin{equation}
\sum_{i=1}^k\sum_{l=1}^d\sum_{n=1}^{kd-1}\frac{\left| \alpha_{i,n}^l \right|^2}{\mu(S_i)}\leq kd-1.
\end{equation}
Consequently, there exists an $i_0\in \{1,2,\ldots, k\}$ such that
\begin{equation}
\frac 1 d \sum_{l=1}^d  \sum_{n=1}^{kd-1}\frac{\left| \alpha_{i_0,n}^{l} \right|^2}{\mu(S_i)}\leq 1-\frac{1}{kd}.
\end{equation}
We insert this estimate into inequality (\ref{eq:phi-lambda}) to obtain
\begin{equation}
2\sqrt{D_G^{nor}t}\max_{1\leq i\leq k}\phi^{\sigma}(S_{i})\geq \frac{1}{kd}-e^{-t\lambda_{kd}^\sigma}.
\end{equation}
Since the $k$-subpartition was chosen arbitrarily, we have
\begin{equation}
2  \sqrt{D_G^{nor}t} \cdot h_k^\sigma  \geq \frac{1}{kd} - e^{-t\lambda_{kd}^\sigma}.
\end{equation}
For $\lambda_{kd}\neq 0$, we choose $t=\log(2dk)/\lambda_{kd}^\sigma$ to obtain
\begin{equation}
4 \sqrt{D_G^{nor}}kd \sqrt{\log(2dk)}   h_k^\sigma   \geq \sqrt{\lambda_{kd}^\sigma}.
\end{equation}
This completes the proof.
\end{proof}
Recall from Corollary \ref{cor:lower curvature bound} that any graph $(G,\mu,\sigma)$ has a specific finite lower curvature bound. In case of a negative lower curvature bound, we have the following result.
For a subset $S\subseteq V$, we define the following constant, which is no greater than $\iota^\sigma(S)$,
\begin{equation}
\widetilde{\iota^\sigma}(S) := \min_{\substack{f: S \to  \mathbb{K}^{d}\\ |f(x)|=1,\,\forall x\in S}}\sum_{\{x,y\}\in E_S}
w_{xy}|\sigma_{xy}f(y)-f(x)|.
\end{equation}
Using this constant, we have the following isoperimetric type inequality.

\begin{thm}\label{thm:isoperimetry}
 Let $(G,\mu, \sigma)$ satisfy $CD^\sigma(-K,\infty)$, for $K\geq 0$. Then for any subset $\emptyset\neq S\subseteq V$,
 \begin{equation}
 \widetilde{\iota^\sigma}(S)+|E(S, V\setminus S)|\geq\frac{1}{2\sqrt{2D_G^{nor}}}\min\left\{(1-e^{-1})\sqrt{\lambda_1^\sigma}, \frac{\lambda_1^\sigma}{2\sqrt{2K}}\right\}\mu(S).
 \end{equation}
\end{thm}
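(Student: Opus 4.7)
The plan is to adapt the strategy underlying Theorem~\ref{thm:Buser} to the single-set, negative-curvature setting. Fix a nonempty $S\subseteq V$; the claim is trivial when $\lambda_1^\sigma=0$, so I shall assume $\lambda_1^\sigma>0$. Let $f:V\to\mathbb{K}^d$ attain the infimum in the definition of $\widetilde{\iota^\sigma}(S)$, extended by zero outside $S$; then $\|f\|_\infty\leq 1$ and $\|f\|_1=\|f\|_2^2=\mu(S)$. Splitting the sum on the right of Lemma~\ref{lemma:sqrtGamma} into edges inside $S$ and edges crossing $\partial S$, exactly as in the proof of Theorem~\ref{thm:Buser}, gives
\begin{equation*}
\|\sqrt{\Gamma^\sigma(f)}\|_1 \leq \sqrt{2D_G^{nor}}\bigl(\widetilde{\iota^\sigma}(S)+|E(S,V\setminus S)|\bigr).
\end{equation*}

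The core technical step is a negative-curvature analogue of Lemma~\ref{lemma:lonenorm}: under $CD^\sigma(-K,\infty)$ with $K>0$ and for every $t\in(0,1/(2K)]$,
\begin{equation*}
\|f-P_t^\sigma f\|_1 \leq 2\sqrt{t}\,\|\sqrt{\Gamma^\sigma(f)}\|_1.
\end{equation*}
I would reprise the Ledoux-type dual argument of Lemma~\ref{lemma:lonenorm} verbatim, but invoke the characterization (\ref{eq:BEgradient2}) of Theorem~\ref{thm:curvature-characterization} in place of the $K=0$ version; this yields $\|\sqrt{\Gamma^\sigma(P_s^\sigma g)}\|_\infty\leq \sqrt{K/(1-e^{-2Ks})}$ for the dual function $g$ with $|g|\leq 1$. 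The elementary inequality $1-e^{-x}\geq x/2$ on $[0,1]$ then gives $\sqrt{K/(1-e^{-2Ks})}\leq 1/\sqrt{s}$ whenever $s\leq 1/(2K)$, so integrating over $[0,t]$ produces the factor $2\sqrt{t}$. I expect this integral estimate, together with the bookkeeping of the admissible range of $t$, to be the main obstacle.

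Pairing this upper bound with a lower bound obtained from the eigenfunction expansion $f=\sum_n\alpha_n\psi_n$ in an $\ell^2(V,\mathbb{K}^d;\mu)$-orthonormal eigenbasis of $-\Delta^\sigma$ via H\"older duality,
\begin{equation*}
\|f-P_t^\sigma f\|_1 \geq \|f\|_\infty^{-1}\,\mathrm{Re}\langle f-P_t^\sigma f,f\rangle_\mu = \|f\|_2^2-\|P_{t/2}^\sigma f\|_2^2 \geq (1-e^{-t\lambda_1^\sigma})\mu(S),
\end{equation*}
where the middle equality uses the self-adjointness and semigroup property of $P_t^\sigma$ and the last inequality uses $e^{-t\lambda_n^\sigma}\leq e^{-t\lambda_1^\sigma}$ term by term, I obtain for every $t\in(0,1/(2K)]$
\begin{equation*}
\widetilde{\iota^\sigma}(S)+|E(S,V\setminus S)| \geq \frac{(1-e^{-t\lambda_1^\sigma})\mu(S)}{2\sqrt{2D_G^{nor}\,t}}.
\end{equation*}

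The last step is to optimize $t$ to reproduce each branch of the stated minimum. If $\lambda_1^\sigma\geq 2K$ (adopting $1/(2K)=\infty$ when $K=0$), I set $t=1/\lambda_1^\sigma$, which lies in the admissible range; the numerator becomes $1-e^{-1}$ and the bound reduces to $(1-e^{-1})\sqrt{\lambda_1^\sigma}/(2\sqrt{2D_G^{nor}})\cdot\mu(S)$, the first branch. If $\lambda_1^\sigma<2K$, I set $t=1/(2K)$ and apply $1-e^{-x}\geq x/2$ to $x=\lambda_1^\sigma/(2K)\in[0,1)$, producing $\lambda_1^\sigma/(8\sqrt{KD_G^{nor}})\cdot\mu(S)=\bigl(\lambda_1^\sigma/(2\sqrt{2K})\bigr)\cdot\bigl(1/(2\sqrt{2D_G^{nor}})\bigr)\cdot\mu(S)$, the second branch.
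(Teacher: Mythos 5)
Your proposal is correct and follows essentially the same route as the paper: the paper likewise modifies Lemma \ref{lemma:lonenorm} via the $CD^\sigma(-K,\infty)$ version of Theorem \ref{thm:curvature-characterization}(iii) to get $\Vert f-P_t^\sigma f\Vert_1\leq 2\sqrt{t}\,\Vert\sqrt{\Gamma^\sigma(f)}\Vert_1$ for $t\leq 1/(2K)$ using $1-e^{-u}\geq u/2$, then reruns the Theorem \ref{thm:Buser} argument for the single set $S$ with the extremal function of $\widetilde{\iota^\sigma}(S)$, and finally splits into the cases $\lambda_1^\sigma\geq 2K$ (taking $t=1/\lambda_1^\sigma$) and $\lambda_1^\sigma<2K$ (taking $t=1/(2K)$), exactly as you do.
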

\begin{proof}
Modifying the proof of Lemma \ref{lemma:lonenorm} for $K\geq 0$, we derive from the inequality $CD^\sigma(-K, \infty)$ that for any function $f: V\to \mathbb{K}^d$,
\begin{equation}
\Vert f-P^\sigma_tf\Vert_1\leq \int_0^t\sqrt{\frac{K}{1-e^{-2Ks}}}ds\Vert\sqrt{\Gamma^\sigma(f)}\Vert_1.
\end{equation}
Using $1-e^{-u}\geq u/2$ for $0\leq u\leq 1$, we have for $0\leq t\leq 1/(2K)$,
\begin{equation}
\Vert f-P^\sigma_tf\Vert_1\leq 2\sqrt{t}\Vert\sqrt{\Gamma^\sigma(f)}\Vert_1.
\end{equation}
Let $S$ be an arbitrary nonempty subset of $V$. Let $f^0: S\to \mathbb{K}^{d}$ with $|f^0(x)|=1$ for all $x\in S$ be the function achieving the value of $\widetilde{\iota^\sigma}(S)$. By similar reasoning as in the proof of Theorem \ref{thm:Buser}, we obtain for $0\leq t\leq 1/(2K)$,
\begin{equation}
2\sqrt{2D_G^{nor}t}\left(\widetilde{\iota^\sigma}(S)+|E(S, V\setminus S)|\right)\geq \mu(S)(1-e^{-t\lambda_1^\sigma}).
\end{equation}
If $\lambda_1^\sigma\geq 2K$, we set $t=1/\lambda^\sigma_1$ and obtain
\begin{equation}
2\sqrt{2D_G^{nor}}\left(\widetilde{\iota^\sigma}(S)+|E(S, V\setminus S)|\right)\geq \mu(S)(1-e^{-1})\sqrt{\lambda_1^\sigma}.
\end{equation}
If $\lambda_1^\sigma< 2K$, we set $t=1/(2K)$ and obtain
\begin{equation}
2\sqrt{\frac{D_G^{nor}}{K}}\left(\widetilde{\iota^\sigma}(S)+|E(S, V\setminus S)|\right)\geq \mu(S)(1-e^{-\frac{\lambda_1^\sigma}{2K}})\geq \mu(S)\frac{\lambda_1^\sigma}{4K}.
\end{equation}
Combining both cases completes the proof.
\end{proof}
We now define the following Cheeger type constant $\widetilde{h_1^\sigma}$ corresponding to $\widetilde{\iota^\sigma}(S)$.
\begin{definition}
 Let $(G,\mu,\sigma)$ be given. The constant $\widetilde{h_1^\sigma}$ is defined as
\begin{equation*}
 \widetilde{h_1^\sigma}=\min_{\emptyset\neq S\subseteq V}\frac{\widetilde{\iota^\sigma}(S)+|E(S,V\setminus S)|}{\mu(S)}.
\end{equation*}
\end{definition}
By definition, we observe that $\widetilde{h_1^\sigma}\leq h_1^\sigma$. Theorem \ref{thm:isoperimetry} implies the following estimate immediately.
\begin{coro}\label{thm:Buser-K}
  Let $(G,\mu, \sigma)$ satisfy $CD^\sigma(-K,\infty), K\geq 0$. Then we
  have
  \begin{equation}\label{eq:Buser-K}
       \lambda^\sigma_{1}\leq 8\max\{(e/(e-1))^2 D_G^{nor}(\widetilde{h_1^\sigma})^2, \sqrt{D_G^{nor}K}\widetilde{h_1^\sigma}\}.
  \end{equation}
\end{coro}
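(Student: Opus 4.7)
The plan is to derive Corollary \ref{thm:Buser-K} as a direct algebraic consequence of Theorem \ref{thm:isoperimetry}. Since Theorem \ref{thm:isoperimetry} provides a lower bound on $\widetilde{\iota^\sigma}(S) + |E(S,V\setminus S)|$ by $\mu(S)$ times a certain expression in $\lambda_1^\sigma$, dividing through by $\mu(S)$ yields a lower bound on the Cheeger-type ratio $(\widetilde{\iota^\sigma}(S)+|E(S,V\setminus S)|)/\mu(S)$ that is uniform in $S$. Taking the infimum over all nonempty $S \subseteq V$ gives
\[
\widetilde{h_1^\sigma} \geq \frac{1}{2\sqrt{2 D_G^{nor}}} \min\left\{(1-e^{-1})\sqrt{\lambda_1^\sigma},\ \frac{\lambda_1^\sigma}{2\sqrt{2K}}\right\}.
\]

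From here I would split into the two cases according to which term realizes the minimum. In the first case, $\widetilde{h_1^\sigma} \geq \frac{1-e^{-1}}{2\sqrt{2 D_G^{nor}}}\sqrt{\lambda_1^\sigma}$; solving for $\lambda_1^\sigma$ and squaring gives $\lambda_1^\sigma \leq 8(e/(e-1))^2 D_G^{nor}\,(\widetilde{h_1^\sigma})^2$. In the second case, $\widetilde{h_1^\sigma} \geq \lambda_1^\sigma/(8\sqrt{D_G^{nor}K})$, using $2\sqrt{2D_G^{nor}}\cdot 2\sqrt{2K} = 8\sqrt{D_G^{nor}K}$; solving gives $\lambda_1^\sigma \leq 8\sqrt{D_G^{nor}K}\,\widetilde{h_1^\sigma}$. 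Bounding the true value of $\lambda_1^\sigma$ by the worse of the two bounds converts the $\min$ on the right-hand side into the $\max$ appearing in \eqref{eq:Buser-K}, completing the proof.

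There is essentially no obstacle here since the heavy lifting — the isoperimetric estimate in $K\geq 0$ curvature derived by modifying Lemma \ref{lemma:lonenorm} via the sharper bound coming from the $CD^\sigma(-K,\infty)$ version of the heat-semigroup gradient estimate and the auxiliary inequality $1-e^{-u}\geq u/2$ for $0\leq u\leq 1$ — has already been done in Theorem \ref{thm:isoperimetry}. The only subtlety is the bookkeeping that converts a $\min\{A\sqrt{\lambda_1^\sigma},\,B\lambda_1^\sigma\}$ type lower bound for $\widetilde{h_1^\sigma}$ into a $\max\{A'(\widetilde{h_1^\sigma})^2,\,B'\widetilde{h_1^\sigma}\}$ type upper bound for $\lambda_1^\sigma$, which is handled by the case distinction above.
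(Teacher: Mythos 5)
Your proposal is correct and follows exactly the paper's route: the paper states that Corollary \ref{thm:Buser-K} follows immediately from Theorem \ref{thm:isoperimetry}, and your dividing by $\mu(S)$, minimizing over $S$, and case-splitting on the $\min$ (with the arithmetic $2\sqrt{2D_G^{nor}}\cdot 2\sqrt{2K}=8\sqrt{D_G^{nor}K}$ and the squaring in the first case) is precisely the omitted bookkeeping. No gaps.
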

Note that, for the constant $\widetilde{h_1^\sigma}$, the following Cheeger type inequality is proved in \cite[Theorem 4.1]{BSS13} (see also \cite[Theorem 4.6 and Remark 4.9]{LLPP15}).
\begin{thm}[\cite{BSS13}]\label{thm:Cheeger}
 Let $(G,\mu,\sigma)$ be given. Then we have
\begin{equation}
 \frac{2}{5D_G^{non}}\widetilde{h_1^\sigma}^2\leq \lambda_1^\sigma\leq 2\widetilde{h_1^\sigma}.
\end{equation}
\end{thm}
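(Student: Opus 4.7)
The plan is to prove the two estimates separately, via a test-function argument for the upper bound and a coarea-style Cheeger cut for the lower bound.

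For the upper bound $\lambda_1^\sigma \leq 2\widetilde{h_1^\sigma}$, I would argue by the Rayleigh quotient characterization. Pick a minimizing pair: a subset $S \subseteq V$ and a unit vector field $f^0 : S \to \mathbb{K}^d$ that jointly attain $\widetilde{h_1^\sigma}$ and $\widetilde{\iota^\sigma}(S)$, and extend $f^0$ to all of $V$ by zero. Then $\|f^0\|_2^2 = \mu(S)$, and the summation-by-parts formula gives $\langle -\Delta^\sigma f^0, f^0\rangle_\mu = \sum_{\{x,y\}\in E} w_{xy}|\sigma_{xy}f^0(y) - f^0(x)|^2$. Since $|f^0(x)| \leq 1$ everywhere, the elementary inequality $|v|^2 \leq 2|v|$ (valid for $|v| \leq 2$) applies edge by edge, so the Dirichlet energy is bounded above by $2\widetilde{\iota^\sigma}(S) + |E(S,V\setminus S)|$ after splitting the sum over interior edges of $S$ and boundary edges. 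Dividing by $\mu(S)$ yields the claimed upper bound.

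The lower bound $\lambda_1^\sigma \geq \tfrac{2}{5 D_G^{non}}(\widetilde{h_1^\sigma})^2$ is the harder direction. Let $f$ realize $\lambda_1^\sigma$ and polar-decompose $f(x) = F(x)\hat f(x)$ with $F(x) := |f(x)|$ and $\hat f(x)$ a unit vector where $F(x) > 0$. The key identity
\[
|\sigma_{xy}f(y) - f(x)|^2 = (F(y) - F(x))^2 + F(x)F(y)\,|\sigma_{xy}\hat f(y) - \hat f(x)|^2
\]
splits the Dirichlet form into a radial part $A$ and an angular part $B$ with $A + B = \lambda_1^\sigma \|F\|_2^2$; it continues to hold formally on edges where one endpoint has $F=0$, because the angular term is muted there. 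Introduce the level sets $U_t := \{x : F(x)^2 > t\}$ for $t > 0$; on each $U_t$ the field $\hat f$ is a well-defined unit vector field, so $\widetilde{\iota^\sigma}(U_t) \leq \sum_{\{x,y\}\in E_{U_t}} w_{xy}|\sigma_{xy}\hat f(y) - \hat f(x)|$. Integrating in $t$ and using the classical coarea identities $\int_0^\infty \mu(U_t)\,dt = \|F\|_2^2$, $\int_0^\infty |E(U_t, V\setminus U_t)|\,dt = \sum_{\{x,y\}} w_{xy}|F(x)^2 - F(y)^2|$, together with the analogous bound $\int_0^\infty \widetilde{\iota^\sigma}(U_t)\,dt \leq \sum_{\{x,y\}} w_{xy}|\sigma_{xy}\hat f(y) - \hat f(x)|\min\{F(x)^2, F(y)^2\}$, and averaging in $t$, I obtain
\[
\widetilde{h_1^\sigma}\,\|F\|_2^2 \leq \sum_{\{x,y\}} w_{xy}\Bigl(|\sigma_{xy}\hat f(y) - \hat f(x)|\min\{F(x)^2, F(y)^2\} + |F(x)^2 - F(y)^2|\Bigr).
\]
Two applications of Cauchy--Schwarz, using the factorization $F_x^2 - F_y^2 = (F_x - F_y)(F_x + F_y)$, the estimate $\min(F_x^2, F_y^2) \leq F_x F_y$, and the pointwise bound $\sum_x d_x F(x)^2 \leq D_G^{non}\|F\|_2^2$, bound the two pieces on the right by $\sqrt{2 A D_G^{non}}\,\|F\|_2$ and $\sqrt{B D_G^{non}/2}\,\|F\|_2$ respectively.

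The final step is a scalar optimization. Setting $u = \sqrt A$, $v = \sqrt B$, one checks that
\[
\frac{2u^2 + 2uv + \tfrac{1}{2}v^2}{u^2 + v^2} \leq \frac{5}{2},
\]
with equality at the ratio $v/u = 1/2$. Squaring the Cauchy--Schwarz estimate and invoking this optimization gives $(\widetilde{h_1^\sigma})^2\,\|F\|_2^2 \leq \tfrac{5}{2} D_G^{non}(A + B) = \tfrac{5}{2} D_G^{non}\lambda_1^\sigma\|F\|_2^2$, which is exactly the stated lower bound. The main technical obstacle is the lower bound, specifically justifying the polar decomposition and the coarea manipulations around vertices where $f$ vanishes, and carrying out the Cauchy--Schwarz and scalar-optimization steps sharply enough to recover the constant $2/5$; cruder inequalities such as $(a+b)^2 \leq 2(a^2+b^2)$ would only yield a weaker constant of order $1/(8 D_G^{non})$.
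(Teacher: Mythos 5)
The paper does not prove this statement---it is quoted directly from \cite[Theorem 4.1]{BSS13} (see also \cite{LLPP15})---so there is no internal proof to compare against. Your argument is correct and is essentially the standard proof from those references: the zero-extension of a minimizing unit vector field together with $|v|^2\le 2|v|$ for the upper bound, and, for the lower bound, the radial/angular splitting $|\sigma_{xy}f(y)-f(x)|^2=(F(y)-F(x))^2+F(x)F(y)|\sigma_{xy}\hat f(y)-\hat f(x)|^2$, the coarea sweep over the level sets $U_t=\{F^2>t\}$, the two Cauchy--Schwarz estimates with $\sum_x d_xF(x)^2\le D_G^{non}\|F\|_2^2$, and the final optimization $2u^2+2uv+\tfrac12 v^2\le\tfrac52(u^2+v^2)$ (equivalently $\tfrac12(u-2v)^2\ge 0$), which recovers the constant $2/(5D_G^{non})$ exactly.
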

\begin{example}[Signed Triangle] We revisit the example of a signed triangle discussed in Section \ref{subsection:triangle} (see Figure \ref{F1}). In this case we have $H=U(1)$ and, therefore, $h_1^\sigma=\widetilde{h_1^\sigma}$. Using Theorem \ref{thm:spanning tree}, we can check
$$h_1^\sigma=\frac{|s-1|}{6}=\frac{\sqrt{2(1-\mathrm{Re}(s))}}{6}.$$
The Buser type inequality Theorem \ref{thm:Buser} tells us
\begin{equation}\label{eq:triangle Buser}
 \lambda_1^\sigma\leq 32\log 2(h_1^\sigma)^2,
\end{equation}
while the Cheeger type inequality Theorem \ref{thm:Cheeger} gives
\begin{equation}\label{eq:triangle Cheeger}
 \frac{2}{5}(h_1^\sigma)^2\leq \lambda_1^\sigma\leq 2h_1^\sigma.
\end{equation}
The comparison of the estimates (\ref{eq:triangle Buser}) and (\ref{eq:triangle Cheeger}) is shown in Figure \ref{FtriangleComp}, where we treat the quantities $\lambda_1^\sigma$ and $h_1^\sigma$ as functions of the variable $\mathrm{Re}(s)$.
\end{example}
\begin{figure}[h]
\centering
\includegraphics[width=0.5\textwidth]{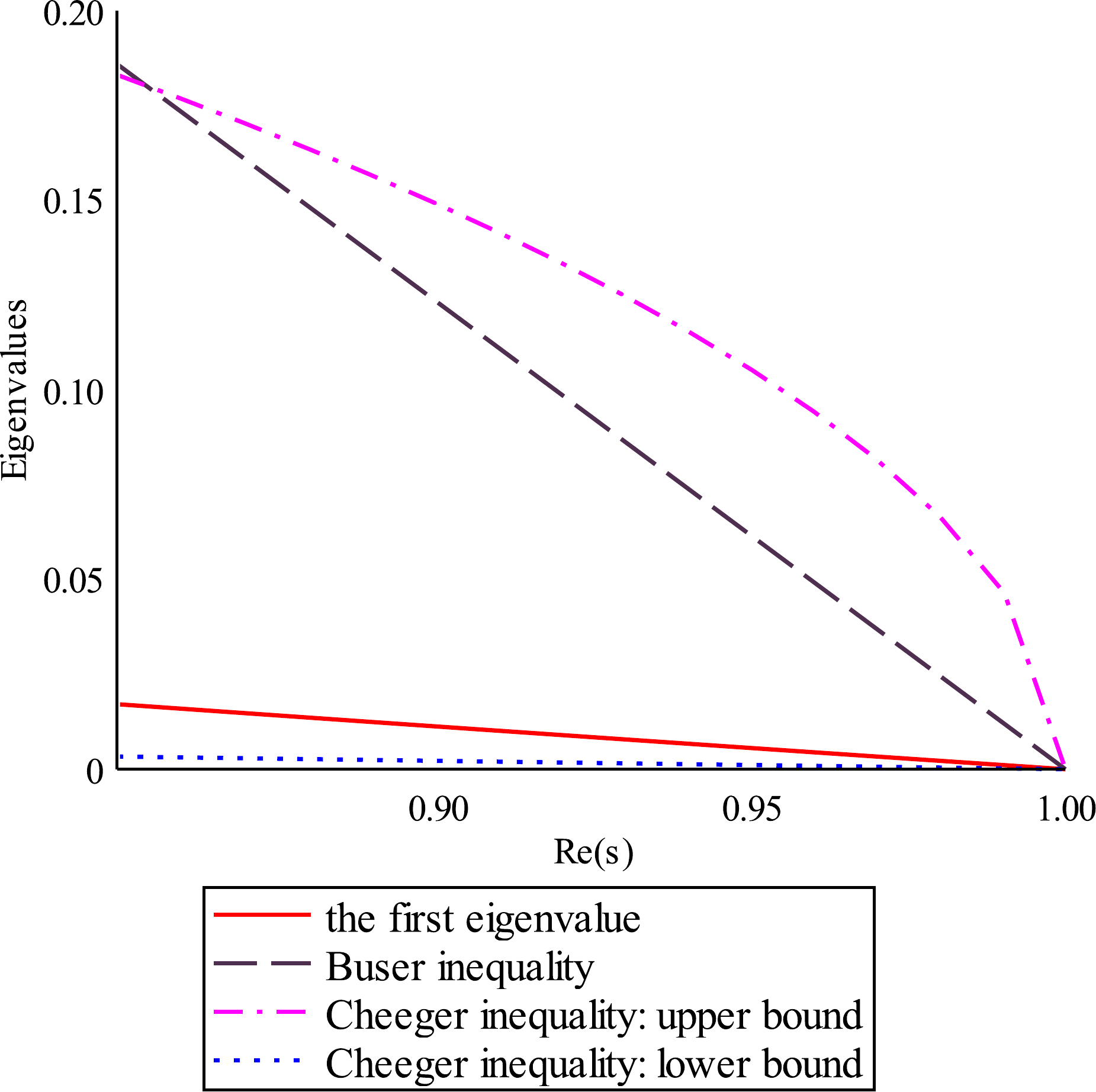}
\caption{Comparison of Cheeger and Buser estimates for a signed triangle\label{FtriangleComp}}
\end{figure}

\section{Lichnerowicz estimate and applications}\label{section:Lichnerowicz}

We have the following Lichnerowicz type eigenvalue estimate (cf. Theorem \ref{thm:introLic} in the Introduction).
\begin{thm}[Lichnerowicz inequality]\label{thm:lichnerowicz}
Assume that $(G, \mu, \sigma)$ satisfies $CD^{\sigma}(K, n)$ for $K\in \mathbb{R}$ and $n\in \mathbb{R}_+$. Then we have for any non-zero eigenvalue $\lambda^{\sigma}$ of $\Delta^{\sigma}$,
\begin{equation}\label{eq:lichnerowicz}
\frac{n-1}{n}\lambda^{\sigma}\geq K,
\end{equation}
where we use the convention $(n-1)/n=1$ in the case $n=\infty$.
\end{thm}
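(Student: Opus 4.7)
The strategy is the standard Lichnerowicz computation, adapted to vector-valued functions and the connection Laplacian. The plan is to take a non-zero eigenfunction $f:V\to\mathbb{K}^{d}$ satisfying $\Delta^{\sigma}f=-\lambda^{\sigma}f$ with $\lambda^{\sigma}>0$, integrate the pointwise $CD^{\sigma}(K,n)$ inequality against the measure $\mu$, and evaluate each of the three resulting integrals in closed form using the eigenvalue equation together with the summation-by-parts identity already established in Section \ref{section:CurDimIneSig}.

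First I would compute $\sum_{x}\mu(x)\,\Gamma^{\sigma}(f)(x)$: by the summation-by-parts formula (\ref{eq:summaBypart}) this equals $-\langle \Delta^{\sigma}f,f\rangle_{\mu}=\lambda^{\sigma}\Vert f\Vert_{2,\mu}^{2}$. The right-hand term $\sum_{x}\mu(x)|\Delta^{\sigma}f(x)|^{2}$ is immediate from the eigenvalue equation and equals $(\lambda^{\sigma})^{2}\Vert f\Vert_{2,\mu}^{2}$. For the integrated $\Gamma_{2}^{\sigma}$ I would apply its defining identity (\ref{eq:introGammatwo}): the term $\sum_{x}\mu(x)\,\Delta\Gamma^{\sigma}(f,f)(x)$ drops out by (\ref{eq:laplacian-integration}), since $\Gamma^{\sigma}(f,f)$ is $\mathbb{K}$-valued and hence the ordinary graph Laplacian $\Delta$ applies. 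Plugging $\Delta^{\sigma}f=-\lambda^{\sigma}f$ into the remaining two terms $-\Gamma^{\sigma}(f,\Delta^{\sigma}f)-\Gamma^{\sigma}(\Delta^{\sigma}f,f)$ and using linearity/antilinearity (together with the fact that $\lambda^{\sigma}\in\mathbb{R}$, which is why the conjugation in Definition \ref{def:introGamma} causes no harm) yields
\begin{equation*}
\sum_{x}\mu(x)\,\Gamma_{2}^{\sigma}(f)(x)=(\lambda^{\sigma})^{2}\Vert f\Vert_{2,\mu}^{2}.
\end{equation*}

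Combining these three evaluations in the integrated $CD^{\sigma}(K,n)$ inequality gives
\begin{equation*}
(\lambda^{\sigma})^{2}\Vert f\Vert_{2,\mu}^{2}\geq \tfrac{1}{n}(\lambda^{\sigma})^{2}\Vert f\Vert_{2,\mu}^{2}+K\lambda^{\sigma}\Vert f\Vert_{2,\mu}^{2},
\end{equation*}
and dividing by the positive quantity $\lambda^{\sigma}\Vert f\Vert_{2,\mu}^{2}$ produces the desired estimate $\tfrac{n-1}{n}\lambda^{\sigma}\geq K$. The case $n=\infty$ follows under the stated convention.

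There is no serious obstacle: the computation is essentially algebraic once (\ref{eq:summaBypart}) and the vanishing property (\ref{eq:laplacian-integration}) are in hand. The only point requiring care is bookkeeping with the Hermitian (rather than symmetric) structure of $\Gamma^{\sigma}$, in particular verifying that the two cross terms $\Gamma^{\sigma}(f,\Delta^{\sigma}f)$ and $\Gamma^{\sigma}(\Delta^{\sigma}f,f)$ both contribute $\lambda^{\sigma}\Gamma^{\sigma}(f)$ when integrated; this uses the Hermiticity (\ref{eq:gamma symmetric}) and the reality of $\lambda^{\sigma}$ (ensured by the self-adjointness (\ref{eq:self-adjoint}) of $\Delta^{\sigma}$).
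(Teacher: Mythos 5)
Your proposal is correct and follows essentially the same route as the paper: integrate the pointwise $CD^{\sigma}(K,n)$ inequality against $\mu$, kill the $\Delta\Gamma^{\sigma}(f)$ term via (\ref{eq:laplacian-integration}), evaluate the remaining terms with the eigenvalue equation and the summation-by-parts formula (\ref{eq:summaBypart}), and divide by $\lambda^{\sigma}\Vert f\Vert_{2,\mu}^{2}>0$. The only cosmetic difference is that the paper normalizes $\psi$ to unit norm, so no substantive comparison is needed.
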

\begin{proof}
Let $\psi: V\to \mathbb{K}^d$ be the corresponding eigenfunction of $\lambda^{\sigma}$ with unit $\ell^2(V,\mathbb{K}^d;\mu)$ norm. Integrating the inequality $CD^{\sigma}(K, n)$ over the measure $\mu$, we obtain
\begin{equation}\label{eq:cdineq-integration}
\sum_{x\in V}\mu(x)\Gamma_2^\sigma(\psi)(x)\geq \frac{1}{n}\sum_{x\in V}\mu(x)\left| \Delta^\sigma \psi(x) \right|^2+K\sum_{x\in V}\mu(x)\Gamma^\sigma(\psi)(x).
\end{equation}
By (\ref{eq:laplacian-integration}), we have
\begin{equation*}
\sum_{x\in V}\mu(x)\Gamma_2^\sigma(\psi)(x)=-\sum_{x\in V}\mu(x)\mathrm{Re}(\Gamma^\sigma(\psi, \Delta^{\sigma}\psi)(x))=\lambda^{\sigma}\sum_{x\in V}\mu(x)\Gamma^\sigma(\psi)(x).
\end{equation*}
Recalling the summation by part formula (\ref{eq:summaBypart}), we have
\begin{equation*}
\sum_{x\in V}\mu(x)\Gamma^\sigma(\psi)(x)=-\langle \psi, \Delta^{\sigma}\psi\rangle_{\mu}=\lambda^{\sigma}.
\end{equation*}
Therefore, (\ref{eq:cdineq-integration}) tells us that
\begin{equation}
(\lambda^{\sigma})^2\geq \frac{1}{n}(\lambda^{\sigma})^2+\lambda^{\sigma}K.
\end{equation}
This implies (\ref{eq:lichnerowicz}) in the case $\lambda^{\sigma}\neq 0$.
\end{proof}

Consequently, we have the following estimates about the lower curvature bound of a graph.
\begin{coro}\label{cor:CurUpperCheeger}
Let $(G,\mu, \sigma)$ satisfy $CD^\sigma(K,n)$, for some $K\in \mathbb{R}$ and $n\in \mathbb{R}_+$. Then we have the following facts:
\begin{enumerate}[(i)]
 \item If $n=1$, we have $$K\leq 0;$$\label{eq:n=1}
  \item If $0< n<1$, we have $K<0$. If, furthermore, $\sigma$ is not balanced, we have $$K\leq -\frac{2(1-n)}{5nD_G^{non}}(\widetilde{h_1^\sigma})^2;$$\label{eq:n<1}
  \item If $1<n\leq \infty$ and $\sigma$ is not balanced, we have $$K\leq \frac{2(n-1)}{n}\widetilde{h_1^\sigma}\leq\frac{2(n-1)}{n}h_1^\sigma.$$\label{eq:n>1}
\end{enumerate}
\end{coro}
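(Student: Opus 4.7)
The plan is to combine the Lichnerowicz estimate (Theorem~\ref{thm:lichnerowicz}) with the Cheeger inequality (Theorem~\ref{thm:Cheeger}) to bound $K$ from above in each of the three regimes for $n$. The key preliminary observation is that when $\sigma$ is not balanced one has $\widetilde{h_1^\sigma}>0$, so the lower Cheeger bound
\[
\lambda_1^\sigma\geq \frac{2}{5D_G^{non}}\bigl(\widetilde{h_1^\sigma}\bigr)^2
\]
forces $\lambda_1^\sigma>0$; that is, $\lambda_1^\sigma$ is an honest nonzero eigenvalue of $\Delta^\sigma$ which may be fed into Lichnerowicz.

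For (i), take $n=1$: Theorem~\ref{thm:lichnerowicz} reads $0=\tfrac{n-1}{n}\lambda^\sigma\geq K$ for any nonzero eigenvalue, and such an eigenvalue exists in any graph with at least one edge (since the off-diagonal part of $A^\sigma$ is nonzero). For (ii) with $0<n<1$, the factor $(n-1)/n$ is strictly negative, so Lichnerowicz immediately gives $K\leq \tfrac{n-1}{n}\lambda^\sigma<0$ as soon as some $\lambda^\sigma>0$ exists; in the unbalanced case one feeds in $\lambda^\sigma=\lambda_1^\sigma$ and multiplies the lower Cheeger bound by the negative $(n-1)/n$, which reverses the sense of the inequality and yields $K\leq -\tfrac{2(1-n)}{5nD_G^{non}}(\widetilde{h_1^\sigma})^2$. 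For (iii) with $1<n\leq\infty$, the factor $(n-1)/n$ is positive, so Lichnerowicz reads $K\leq \tfrac{n-1}{n}\lambda_1^\sigma$, and the upper Cheeger bound $\lambda_1^\sigma\leq 2\widetilde{h_1^\sigma}$ closes the first inequality of the claim. The tail inequality $\widetilde{h_1^\sigma}\leq h_1^\sigma$ is a direct comparison of definitions: the average $(2,1)$-norm appearing in $\iota^\sigma(S)$ decomposes as an average over the $d$ columns $\tau^l:S\to\mathbb{K}^d$ of a switching function $\tau:S\to H$, each of which is a unit-norm vector field admissible in the minimization defining $\widetilde{\iota^\sigma}(S)$; bounding each column contribution below by $\widetilde{\iota^\sigma}(S)$ and averaging gives $\widetilde{\iota^\sigma}(S)\leq \iota^\sigma(S)$.

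The only conceptual point to double-check is the balance/unbalance hypothesis in (ii) and (iii), which is what guarantees positivity of $\lambda_1^\sigma$ and therefore a nontrivial Lichnerowicz bound. This is supplied for free by the lower Cheeger inequality once $\widetilde{h_1^\sigma}>0$ is known, so no separate argument is required and the proof essentially reduces to routine sign-tracking in the three regimes $n=1$, $0<n<1$, and $1<n\leq\infty$.
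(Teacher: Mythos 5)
Your proposal is correct and follows essentially the same route as the paper: the paper's proof likewise observes that $\lambda_1^\sigma>0$ when $\sigma$ is unbalanced and then combines the Lichnerowicz estimate (Theorem \ref{thm:lichnerowicz}) with the Cheeger inequality of Theorem \ref{thm:Cheeger}, with the sign of $(n-1)/n$ dictating which side of the Cheeger inequality is used in each regime. Your column-wise justification of $\widetilde{h_1^\sigma}\leq h_1^\sigma$ is a correct elaboration of what the paper dismisses with ``by definition.''
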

\begin{proof}
The estimate (\ref{eq:n=1}) follows directly from Theorem \ref{thm:lichnerowicz}.
Note that $\lambda_1^\sigma$ is positive when $\sigma$ is not balanced. Hence we can
combine Theorem \ref{thm:lichnerowicz} and Theorem \ref{thm:Cheeger} to conclude estimates (\ref{eq:n<1}) and (\ref{eq:n>1}).
\end{proof}

If the graph has a nonnegative lower curvature bound, we can improve the estimate Corollary \ref{cor:CurUpperCheeger} (\ref{eq:n>1}) by applying Corollary \ref{thm:Buser-K}.
\begin{coro}\label{cor:jump}
 Let $(G,\mu, \sigma)$ satisfy $CD^\sigma(K,n)$, for some $K\geq 0$ and $1<n\leq \infty$. If the signature $\sigma$ is not balanced, then
  we have
  \begin{equation}\label{eq:curvature-Cheeger}
      K\leq \frac{n-1}{n}\min\left\{2\widetilde{h_1^\sigma},\frac{8e^2}{(e-1)^2}D_G^{nor}(\widetilde{h_1^\sigma})^2\right\}.
  \end{equation}
\end{coro}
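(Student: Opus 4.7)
The plan is to chain together three ingredients already established in the paper: the Lichnerowicz estimate (Theorem \ref{thm:lichnerowicz}), the Cheeger-type inequality of Bandeira--Singer--Spielman (Theorem \ref{thm:Cheeger}), and the nonnegatively curved Buser-type estimate (Corollary \ref{thm:Buser-K}). Since $\sigma$ is not balanced, the first eigenvalue $\lambda_1^\sigma$ of $-\Delta^\sigma$ is strictly positive, so Theorem \ref{thm:lichnerowicz} applied to $\lambda^\sigma=\lambda_1^\sigma$ yields
\begin{equation*}
K\ \leq\ \frac{n-1}{n}\,\lambda_1^\sigma.
\end{equation*}
Thus the corollary will follow once we bound $\lambda_1^\sigma$ from above by each of the two expressions appearing in the minimum.

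For the first bound, Theorem \ref{thm:Cheeger} gives directly $\lambda_1^\sigma\leq 2\widetilde{h_1^\sigma}$, which combined with the Lichnerowicz estimate above produces
\begin{equation*}
K\ \leq\ \frac{n-1}{n}\cdot 2\widetilde{h_1^\sigma}.
\end{equation*}

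For the second bound, observe that the hypothesis $CD^\sigma(K,n)$ with $K\geq 0$ and $1<n\leq\infty$ implies the weaker curvature dimension inequality $CD^\sigma(0,\infty)$ (increasing the dimension parameter and decreasing the curvature parameter only weakens the inequality \eqref{eq:CDineq}). Consequently, Corollary \ref{thm:Buser-K} in the case of vanishing negative curvature bound (i.e.\ applied with its parameter $K=0$, making the $\sqrt{D_G^{nor}K}\,\widetilde{h_1^\sigma}$ term disappear) yields
\begin{equation*}
\lambda_1^\sigma\ \leq\ \frac{8e^2}{(e-1)^2}\,D_G^{nor}\,(\widetilde{h_1^\sigma})^2.
\end{equation*}
Feeding this back into the Lichnerowicz estimate gives the second half of the claim, and taking the minimum of the two bounds completes the proof.

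There is no substantive obstacle here: the result is a direct corollary, and the only minor subtlety is the monotonicity remark $CD^\sigma(K,n)\Rightarrow CD^\sigma(0,\infty)$ for $K\geq 0$ and $n\leq\infty$, which was already noted right after Definition \ref{defn:CDinequality}, together with the observation that non-balancedness of $\sigma$ ensures $\lambda_1^\sigma>0$ so that Lichnerowicz applies nontrivially.
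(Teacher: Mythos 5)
Your proposal is correct and follows essentially the same route as the paper: the paper's proof likewise observes that $CD^\sigma(K,n)$ with $K\geq 0$ implies $CD^\sigma(0,\infty)$ so that Corollary \ref{thm:Buser-K} applies, and combines this with the Lichnerowicz estimate and Theorem \ref{thm:Cheeger} exactly as you do. Your write-up merely spells out the chaining (including the nonvanishing of $\lambda_1^\sigma$ for unbalanced $\sigma$) that the paper leaves implicit.
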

\begin{proof}
Recall that $CD^\sigma(K,n)$ implies $CD^\sigma(K,\infty)$. Hence, Corollary \ref{thm:Buser-K} is applicable here.
\end{proof}
\begin{remark}[Jump of the curvature around a balanced signature]\label{remark:jump}
Suppose that a graph $(G, \mu)$ with a balanced signature has positive $n$-dimensional Ricci curvature, i.e., $K_{n}(\sigma_{\mathrm{triv}})>0$. (Recall that every balanced signature is switching equivalent to $\sigma_{\mathrm{triv}}$. Note that by Corollary \ref{cor:CurUpperCheeger},  $K_{n}(\sigma_{\mathrm{triv}})>0$ is only possible when $1<n\leq\infty$.) Then by Corollary \ref{cor:CurUpperCheeger}, we observe that the curvature $K_{n}(\sigma)$ of
$(G,\mu, \sigma)$, as a function of the signature $\sigma$, has the following "jump" phenomenon: For unbalanced signatures $\sigma$, when they are close to the balanced signature $\sigma_{\mathrm{triv}}$,
\begin{equation}
 \limsup_{\iota^\sigma(V)\to 0}K_n(\sigma)\leq 0,\,\,\,\text{but}\,\,K_n(\sigma_{\mathrm{triv}})>0.
\end{equation}
In the above expression, we use $\iota^\sigma(V)$ as a measure for the difference between $\sigma$ and $\sigma_{\mathrm{triv}}$.

\end{remark}
The jump of the curvature is closely relate to the jump phenomenon of the first non-zero eigenvalue of $\Delta^\sigma$. When the signature $\sigma$ of a connected graph becomes balanced, the first non-zero eigenvalue jumps from $\lambda_1^\sigma$ to $\lambda_2^\sigma$.
\begin{example}[Signed Triangle]
We consider the example of a signed triangle again. Recall that we have observed the jump phenomenon of the curvature of a signed triangle in Remark \ref{remark:jumptriangle} (see Figure \ref{F2}). In Figure \ref{FtriangleEigenJump} of the Introduction, the jumps of the $\infty$-dimensional Ricci curvature and the first non-zero eigenvalue of a signed triangle are illustrated in the same diagram.
\end{example}

We conclude this section by an interesting application of the jump phenomenon of the curvature.
\begin{thm}\label{thm:ApplOFJump}
Suppose that a graph $G$ has at least one cycle, but no cycles of length $3$ or $4$. Then, for any signature $\sigma$, any edge weights and any vertex measure $\mu$, we have
$$K_n(G,\mu,\sigma)\leq 0,\,\,\text{ for any }\,\,n\in \mathbb{R}_+.$$
\end{thm}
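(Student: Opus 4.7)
The plan is to combine Proposition \ref{pro:shortcycles}, which neutralizes the role of the signature under the short-cycle hypothesis, with the jump phenomenon of the curvature encoded in Corollary \ref{cor:CurUpperCheeger}. Since $G$ has no cycles of length $3$ or $4$, every signature $\sigma$ satisfies the hypothesis of Proposition \ref{pro:shortcycles} vacuously, so $(G,\mu,\sigma)$ fulfills $CD^\sigma(K,n)$ if and only if $(G,\mu)$ fulfills the classical $CD(K,n)$. In particular, $K_n(G,\mu,\sigma)$ is independent of $\sigma$; I write $K_n^\circ$ for this common value. The theorem then reduces to showing $K_n^\circ\leq 0$ for every $n\in\mathbb{R}_+$.

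For $0<n\leq 1$, Corollary \ref{cor:CurUpperCheeger}(\ref{eq:n=1})--(\ref{eq:n<1}) directly give $K_n^\circ\leq 0$, so the only nontrivial regime is $1<n\leq\infty$. In that regime I construct an auxiliary one-parameter family of unbalanced $U(1)$-signatures approaching the trivial one: their curvatures all coincide with $K_n^\circ$, while their Cheeger constants vanish in the limit, forcing $K_n^\circ\leq 0$.

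Concretely, pick a cycle $C$ of $G$ (necessarily of length at least $5$ by hypothesis) and fix an oriented edge $(x_0,y_0)$ of $C$. For $t\in(0,2\pi)$ define a $U(1)$-signature $\sigma_t$ by setting $\sigma_t(x_0,y_0)=e^{it}$, $\sigma_t(y_0,x_0)=e^{-it}$, and $\sigma_t\equiv 1$ on every other oriented edge. Then $Sgn(C)=e^{it}\neq 1$ shows that $\sigma_t$ is not balanced, while the reduction above yields $K_n(G,\mu,\sigma_t)=K_n^\circ$. Applying Corollary \ref{cor:CurUpperCheeger}(\ref{eq:n>1}) to $(G,\mu,\sigma_t)$ therefore gives
$$
K_n^\circ \;\leq\; \frac{2(n-1)}{n}\,h_1^{\sigma_t}.
$$

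To bound $h_1^{\sigma_t}$, choose $S=V$ in its definition: the boundary term vanishes, so $h_1^{\sigma_t}\leq \iota^{\sigma_t}(V)/\mu(V)$. Using the constant switching function $\tau\equiv 1$ in Definition \ref{defn:frustration index} leaves only the single edge $\{x_0,y_0\}$ contributing, yielding $\iota^{\sigma_t}(V)\leq w_{x_0 y_0}|e^{it}-1|\to 0$ as $t\to 0^+$. Letting $t\to 0^+$ forces $K_n^\circ\leq 0$. The only conceptual step is the reduction via Proposition \ref{pro:shortcycles}; after that the argument is a straightforward deformation, and I do not anticipate any serious obstacle.
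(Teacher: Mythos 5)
Your proposal is correct and follows essentially the same route as the paper: the paper also combines Proposition \ref{pro:shortcycles} (signature-independence of the curvature in the absence of $3$- and $4$-cycles) with the jump phenomenon of Remark \ref{remark:jump}, which is itself just Corollary \ref{cor:CurUpperCheeger} applied to unbalanced signatures with $\iota^\sigma(V)\to 0$. You merely make explicit what the paper leaves implicit, namely the concrete one-parameter family $\sigma_t$ realizing $\iota^{\sigma_t}(V)\to 0$ and the easy cases $0<n\leq 1$.
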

\begin{proof}
 Since $G$ contains at least one cycle, there exist unbalanced signatures on $G$. On the other hand, we have \begin{equation}\label{eq:contrToJump}K_n(G,\mu,\sigma)=K_n(G,\mu,\sigma_{\mathrm{triv}})\end{equation} by Proposition \ref{pro:shortcycles}, as $G$ has no cycles of length $3$ or $4$. Therefore, if $K_n(G,\mu,\sigma_{\mathrm{triv}})>0$, the equality (\ref{eq:contrToJump}) leads to a contradiction to the jump of the curvature observed in Remark \ref{remark:jump}. Hence we must have $K_n(G,\mu,\sigma)\leq 0$.
\end{proof}
 Note that the conditions on the graph in Theorem \ref{thm:ApplOFJump} are purely combinatorial, whereas the curvature estimate holds for any edge weights and vertex measures.

Combining Theorem \ref{thm:ApplOFJump} and Corollary \ref{cor:lower curvature bound}, we obtain an indirect verification of (\ref{eq:5cycle Curvature}). Actually, we obtain the following more general result.
\begin{coro}\label{cor:5cycle}
Let $N\geq 5$ and $(\mathcal{C}_N,\mu,\sigma)$ be an unweighted cycle with constant vertex measure $\mu=\nu_0\cdot \mathbf{1}_V$. Then we have
$$K_n(\mathcal{C}_N,\mu,\sigma)=0 \,\,\text{for any}\,\,n\geq 2.$$
\end{coro}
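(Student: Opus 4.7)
The plan is to squeeze $K_n(\mathcal{C}_N,\mu,\sigma)$ between a nonnegative lower bound and a nonpositive upper bound, both of which are already available in the excerpt.

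First, I would obtain the lower bound $K_n(\mathcal{C}_N,\mu,\sigma)\ge 0$ for $n\ge 2$. By Corollary \ref{cor:lower curvature bound}, any unweighted cycle graph with constant vertex measure $\mu=\nu_0\cdot\mathbf{1}_V$ and any signature $\sigma$ satisfies $CD^\sigma(0,2)$. Since the $CD^\sigma$ inequality monotonically weakens in the dimension parameter (i.e., $CD^\sigma(K,n_1)\Rightarrow CD^\sigma(K,n_2)$ for $n_1\le n_2$, as stated right after Definition \ref{defn:CDinequality}), we get $CD^\sigma(0,n)$ for every $n\ge 2$, and hence $K_n(\mathcal{C}_N,\mu,\sigma)\ge 0$.

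Second, I would obtain the upper bound $K_n(\mathcal{C}_N,\mu,\sigma)\le 0$ via Theorem \ref{thm:ApplOFJump}. The hypotheses of that theorem are purely combinatorial: the underlying graph must contain at least one cycle but no cycles of length $3$ or $4$. For $N\ge 5$, the graph $\mathcal{C}_N$ itself is a cycle (so there is at least one), and its only cycles are of length $N\ge 5$ (so none of length $3$ or $4$). Consequently Theorem \ref{thm:ApplOFJump} applies and gives $K_n(\mathcal{C}_N,\mu,\sigma)\le 0$ for every $n\in\mathbb{R}_+$, with no restriction on the signature, edge weights, or vertex measure.

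Combining the two bounds yields the equality $K_n(\mathcal{C}_N,\mu,\sigma)=0$ for all $n\ge 2$, which is the desired conclusion. The argument is essentially a one-line consequence of the two quoted results; there is no real obstacle, only the need to verify that $\mathcal{C}_N$ with $N\ge 5$ satisfies the combinatorial hypothesis of Theorem \ref{thm:ApplOFJump} and that the dimension monotonicity of $CD^\sigma$ lets us push Corollary \ref{cor:lower curvature bound} from $n=2$ up to general $n\ge 2$.
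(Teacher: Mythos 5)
Your proof is correct and is exactly the argument the paper gives: the paper obtains this corollary by combining Theorem \ref{thm:ApplOFJump} (which yields $K_n\leq 0$ from the purely combinatorial hypothesis on cycle lengths) with Corollary \ref{cor:lower curvature bound} and the monotonicity of $CD^\sigma(K,n)$ in $n$ (which yield $K_n\geq 0$ for $n\geq 2$). No issues.
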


\section{Eigenvalue ratios of graphs with $O(1)$ signatures}\label{section:O(1)}

In this section, we restrict our considerations to the setting of a graph $(G,\mu)$ with a signature
\begin{equation*}
 \sigma: E^{or}\to O(1)=\{\pm 1\}.
\end{equation*}
We show that Theorem \ref{thm:Buser} can be applied to derive an upper bound for the ratio of the $k$-th eigenvalue $\lambda_k^\sigma$ to the first eigenvalue $\lambda_1^\sigma$ when $(G,\mu,\sigma)$ satisfies $CD^\sigma(0,\infty)$.

Note that the connection Laplacian reduces to an operator on $\ell^2(V,\mathbb{R};\mu)$. That is, for any real function $f:V\rightarrow \mathbb{R}$ and any vertex $x\in V$, we have
\begin{equation}\label{eq:signed Laplacian}
\Delta^\sigma f(x):=\frac{1}{\mu(x)}\sum_{y,y\sim x}w_{xy}(\sigma_{xy}f(y)-f(x))\in \mathbb{R}.
\end{equation}
The eigenvalues of $\Delta^\sigma$ can be listed as
\begin{equation*}
 0\leq \lambda_1^\sigma\leq\cdots\leq \lambda_k^\sigma\cdots\leq \lambda_N^\sigma\leq 2D_G^{non}.
\end{equation*}

In \cite[Theorem 3]{AtayLiu14}, Atay and Liu prove the following estimate.
\begin{thm}[\cite{AtayLiu14}]\label{thm:AtayLiu}
For any graph $(G,\mu,\sigma)$ with $\sigma:E^{or}\to O(1)$ and any natural number $1\leq k\leq N$, we have
\begin{equation}\label{eq:AtayLiu}
 h_1^\sigma\leq 16\sqrt{2D_G^{non}}k\frac{\lambda_1^\sigma}{\sqrt{\lambda_k^\sigma}}.
\end{equation}
\end{thm}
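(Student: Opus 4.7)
The strategy is to produce a single subset $S\subseteq V$ whose Cheeger ratio $\phi^\sigma(S)$ matches the right-hand side of the theorem, by a sweep-cut analysis of a first eigenfunction of $\Delta^\sigma$. The factor $k/\sqrt{\lambda_k^\sigma}$ comes from a pigeonhole improvement of the standard Cheeger step, following the ``improved Cheeger inequality'' paradigm of Kwok--Lau--Lee--Oveis Gharan--Trevisan transported to the signed setting.

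Since $\sigma: E^{or}\to O(1)=\{\pm 1\}$, a first eigenfunction $\psi$ for $\lambda_1^\sigma$ can be taken real-valued. First I would perform a switching reduction: set $\tau(x):=\mathrm{sgn}(\psi(x))$ (arbitrary at zeros), $\tilde\sigma:=\sigma^\tau$ and $\tilde\psi:=\tau\psi=|\psi|\geq 0$. The identity $\tilde\sigma_{xy}\tilde\psi(y)-\tilde\psi(x)=\tau(x)(\sigma_{xy}\psi(y)-\psi(x))$ preserves the Dirichlet energy and all eigenvalues of $\Delta^\sigma$, and $h_1^\sigma$ is switching-invariant, so I may work with $(\tilde\sigma,\tilde\psi)$ normalized by $\|\tilde\psi\|_{2,\mu}=1$.

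Next comes the base (``$k=1$'') coarea bound. For $s\geq 0$, let $S(s):=\{x\in V:\tilde\psi(x)^2>s\}$. Testing $\iota^{\tilde\sigma}(S(s))$ with the constant switching function $\tau'\equiv 1$ on $S(s)$ gives $\iota^{\tilde\sigma}(S(s))\leq 2\sum_{\{x,y\}\in E_{S(s)},\;\tilde\sigma_{xy}=-1}w_{xy}$. A direct integration-by-levels shows that each edge's total contribution to $\int_0^\infty[\iota^{\tilde\sigma}(S(s))+|E(S(s),V\setminus S(s))|]\,ds$ is at most $w_{xy}|\tilde\sigma_{xy}\tilde\psi(y)-\tilde\psi(x)|(\tilde\psi(x)+\tilde\psi(y))$, because same-sign boundary edges contribute via the identity $|\tilde\psi(x)^2-\tilde\psi(y)^2|=|\tilde\psi(x)-\tilde\psi(y)|(\tilde\psi(x)+\tilde\psi(y))$, while for opposite-sign edges the frustration plus boundary contributions combine to $w_{xy}(\tilde\psi(x)^2+\tilde\psi(y)^2)\leq w_{xy}(\tilde\psi(x)+\tilde\psi(y))\cdot|\tilde\sigma_{xy}\tilde\psi(y)-\tilde\psi(x)|$. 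Cauchy--Schwarz with the Dirichlet energy identity $\sum w_{xy}|\tilde\sigma_{xy}\tilde\psi(y)-\tilde\psi(x)|^2=2\lambda_1^\sigma$ and the elementary bound $\sum w_{xy}(\tilde\psi(x)+\tilde\psi(y))^2\leq 2D_G^{non}\|\tilde\psi\|_{2,\mu}^2$ then yields
\[
\int_0^{\|\tilde\psi\|_\infty^2}\bigl[\iota^{\tilde\sigma}(S(s))+|E(S(s),V\setminus S(s))|\bigr]\,ds\;\leq\; 2\sqrt{D_G^{non}\lambda_1^\sigma}.
\]
Together with $\int\mu(S(s))\,ds=1$, a standard mean-value argument produces a threshold with $\phi^{\tilde\sigma}(S(s^*))\lesssim\sqrt{D_G^{non}\lambda_1^\sigma}$, which is already the $k=1$ case of the theorem (matching the Cheeger-type bound of \cite{BSS13}).

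For general $k$, the additional factor $k/\sqrt{\lambda_k^\sigma/\lambda_1^\sigma}$ in the right-hand side would come from refining the mean-value step into a $k$-band pigeonhole argument in the spirit of Kwok--Lau--Lee--Oveis Gharan--Trevisan. The plan is to partition the range $[0,\|\tilde\psi\|_\infty^2]$ into $k$ sub-intervals that equidistribute $\int\mu(S(\cdot))\,ds$; the pigeonhole produces a band in which the sweep-cut threshold $s^*$ gives a $\phi^{\tilde\sigma}(S(s^*))$-ratio better than the crude mean-value bound by a factor of $k$. The key spectral input is a control of the ``concentration'' of $\tilde\psi$ by $\lambda_k^\sigma$: using the min--max characterization of $\lambda_k^\sigma$ applied to the $k$-dimensional subspace spanned by indicator functions of the topmost level sets of $\tilde\psi$, one shows that $\tilde\psi$ cannot have too much $L^2$-mass on sublevel sets of small volume, which trims the effective range of integration by a factor of $\sqrt{\lambda_1^\sigma/\lambda_k^\sigma}$ and produces the stated bound.

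The main obstacle will be the fine-tuning of Paragraph 3 in the presence of the extra frustration term $\iota^{\tilde\sigma}(S(s))$, which has no counterpart in the unsigned Kwok--Lau--Lee--Oveis Gharan--Trevisan argument. The switching of Paragraph 1 is what makes this feasible: it converts frustrated edges inside $S(s)$ into boundary-like contributions treated on equal footing with honest cut edges in the coarea identity, so that no power of $\lambda_k^\sigma$ is lost in the Cauchy--Schwarz step nor in the pigeonhole.
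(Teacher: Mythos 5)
This statement is not proved in the paper at all: it is quoted verbatim from \cite{AtayLiu14} (Theorem 3 there), and the paper only remarks that it is an extension of the improved Cheeger inequality of Kwok et al.\ \cite{KLLGT2013} to $O(1)$ signatures. So the relevant comparison is with that intended route, which you have correctly identified. Your first three paragraphs are sound: the switching reduction $\tilde\psi=|\psi|$, the test function $\tau'\equiv 1$ bounding $\iota^{\tilde\sigma}(S(s))$ by twice the weight of negative edges inside $S(s)$, and the coarea computation (same\mbox{-}sign edges contributing $w_{xy}|\tilde\psi(x)^2-\tilde\psi(y)^2|$, negative edges contributing $w_{xy}(\tilde\psi(x)^2+\tilde\psi(y)^2)\le w_{xy}(\tilde\psi(x)+\tilde\psi(y))\,|\tilde\sigma_{xy}\tilde\psi(y)-\tilde\psi(x)|$) followed by Cauchy--Schwarz do give $h_1^\sigma\lesssim\sqrt{D_G^{non}\lambda_1^\sigma}$, i.e.\ the $k=1$ case.

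The gap is that your fourth paragraph, which is the entire content of the theorem for $k\ge 2$, is a plan rather than a proof, and the mechanism you describe does not match the one that actually works. A pigeonhole over $k$ sub-intervals equidistributing $\int\mu(S(s))\,ds$ cannot by itself produce the factor $\lambda_1^\sigma/\sqrt{\lambda_k^\sigma}$; note also that your two sentences pull in opposite directions (a band ``better by a factor of $k$'' would make the bound smaller by $k$, whereas the theorem's bound is \emph{larger} by a factor $k$ than the trimmed estimate $\sqrt{D}\,\lambda_1^\sigma/\sqrt{\lambda_k^\sigma}$ you arrive at, and the improvement over the $k=1$ bound only kicks in when $\lambda_k^\sigma\gg k^2\lambda_1^\sigma$). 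The actual argument of \cite{KLLGT2013} requires two separate lemmas: (i) if $g$ is a $2k$-step function with $\|\tilde\psi-g\|_{2,\mu}=\delta$, then the sweep cut of $\tilde\psi$ has ratio $O\bigl(k\,\mathcal{R}(\tilde\psi)+k\sqrt{\mathcal{R}(\tilde\psi)}\,\delta\bigr)$, and (ii) if every $2k$-step function is at distance $\ge\delta$ from $\tilde\psi$, then $\lambda_k^\sigma=O(\mathcal{R}(\tilde\psi)/\delta^2)$, proved by building $2k$ disjointly supported test functions from the residual $\tilde\psi-g$ on the cells of $g$ --- not from raw indicators of level sets, whose Rayleigh quotients do not see the mass distribution of $\tilde\psi$ and do not yield the required bound. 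Combining (i) and (ii) with $\delta\asymp\sqrt{\lambda_1^\sigma/\lambda_k^\sigma}$ gives the stated inequality. Neither lemma is established (nor correctly stated) in your sketch, and the adaptation of the step-function approximation to carry the frustration term through (i) --- which you explicitly flag as ``the main obstacle'' --- is deferred rather than resolved. As it stands the proposal proves only the $k=1$ case.
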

This result is an extension of the so-called \emph{improved Cheeger inequality} due to Kwok~et~al.~\cite{KLLGT2013} for the graph Laplacian $\Delta$. We also mention that in the current case of $O(1)$ signatures, the multi-way Cheeger constants, given in Definition \ref{defn:Cheeger}, have more explicit combinatorial expressions. We refer to \cite{AtayLiu14} for more details.

As an application of Theorem \ref{thm:Buser}, we prove the following eigenvalue ratio estimates.
\begin{thm}\label{thm:EigenRatio}
For any graph $(G,\mu,\sigma)$ with $\sigma:E^{or}\to O(1)$ satisfying $CD^\sigma(0,\infty)$ and any natural number $1\leq k\leq N$, there exists an absolute constant $C$ such that
\begin{equation}\label{eq:eigenRatio}
 \lambda_k^\sigma\leq CD_G^{nor}D_G^{non}k^2\lambda_1^\sigma.
\end{equation}
\end{thm}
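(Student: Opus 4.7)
The plan is to combine the Main Theorem (Theorem~\ref{thm:Buser}) at level $k=1$ with the improved Cheeger-type estimate of Atay and Liu (Theorem~\ref{thm:AtayLiu}). The curvature assumption $CD^\sigma(0,\infty)$ enters only through Theorem~\ref{thm:Buser}, which furnishes a lower bound on $h_1^\sigma$ in terms of $\sqrt{\lambda_1^\sigma}$; Theorem~\ref{thm:AtayLiu} is purely combinatorial and provides an upper bound on $h_1^\sigma$ in terms of $\lambda_1^\sigma/\sqrt{\lambda_k^\sigma}$. Chaining these two bounds squeezes out the ratio $\lambda_k^\sigma/\lambda_1^\sigma$.

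In detail: since $\sigma:E^{or}\to O(1)$ we have $d=1$. Applying Theorem~\ref{thm:Buser} with $k=1$ yields
\begin{equation*}
\sqrt{\lambda_1^\sigma}\;\leq\;4\sqrt{D_G^{nor}}\,\sqrt{\log 2}\;h_1^\sigma,
\end{equation*}
so $h_1^\sigma\geq \sqrt{\lambda_1^\sigma}/(4\sqrt{D_G^{nor}\log 2})$. Rearranging Theorem~\ref{thm:AtayLiu} gives
\begin{equation*}
\sqrt{\lambda_k^\sigma}\;\leq\;16\sqrt{2D_G^{non}}\,k\,\frac{\lambda_1^\sigma}{h_1^\sigma}.
\end{equation*}
Substituting the Buser lower bound on $h_1^\sigma$ into this inequality and cancelling one factor of $\sqrt{\lambda_1^\sigma}$ yields
\begin{equation*}
\sqrt{\lambda_k^\sigma}\;\leq\;64\sqrt{2\log 2}\,\sqrt{D_G^{non}D_G^{nor}}\,k\,\sqrt{\lambda_1^\sigma}.
\end{equation*}
Squaring gives $\lambda_k^\sigma\leq C\,D_G^{non}D_G^{nor}\,k^2\lambda_1^\sigma$ with $C=64^2\cdot 2\log 2$, which is the claim.

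There is essentially no conceptual obstacle here once both input theorems are in hand: the argument is a one-step squeeze. The only point to be careful about is to verify that Theorem~\ref{thm:Buser} is genuinely applicable at $k=1$ (it is, since the statement quantifies over $1\leq k\leq N$) and that the curvature hypothesis $CD^\sigma(0,\infty)$ is only needed on this side of the sandwich. It is worth noting in a brief remark that the estimate is sharp in spirit with the analogous eigenvalue ratio bound for the graph Laplacian obtained in \cite{LiuPeyerimhoff14}, and that an extension beyond $O(1)$ is not straightforward because, for higher-dimensional signatures, neither the improved Cheeger inequality of Theorem~\ref{thm:AtayLiu} nor the combinatorial representation of the Cheeger constants used in its proof is available.
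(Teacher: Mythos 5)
Your proposal is correct and follows exactly the paper's own argument: apply Theorem~\ref{thm:Buser} at $k=1$ (with $d=1$) to bound $h_1^\sigma$ from below by $\sqrt{\lambda_1^\sigma}/(4\sqrt{D_G^{nor}\log 2})$, feed this into the Atay--Liu estimate of Theorem~\ref{thm:AtayLiu}, cancel one factor of $\sqrt{\lambda_1^\sigma}$, and square. The constants you obtain match the paper's intermediate inequality $\sqrt{\lambda_1^\sigma}\leq 64\sqrt{2\log 2}\sqrt{D_G^{nor}D_G^{non}}\,k\,\lambda_1^\sigma/\sqrt{\lambda_k^\sigma}$, so there is nothing to add.
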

\begin{proof}
Since the inequality $CD^\sigma(0,\infty)$ is satisfied, we have by Theorem \ref{thm:Buser},
\begin{equation*}
 \sqrt{\lambda_1^\sigma}\leq 4\sqrt{(\log 2)D_G^{nor}}h_1^\sigma.
\end{equation*}
Combining this with Theorem \ref{thm:AtayLiu}, we obtain
\begin{equation}\label{eq:o1Buser}
 \sqrt{\lambda_1^\sigma}\leq 64\sqrt{2\log 2}\sqrt{D_G^{nor}D_G^{non}}k\frac{\lambda_1^\sigma}{\sqrt{\lambda_k^\sigma}}.
\end{equation}
This implies (\ref{eq:eigenRatio}) immediately.
\end{proof}
A direct corollary is the following Buser type inequality.
\begin{coro}\label{cor:BuserO1}
 Let $(G,\mu,\sigma)$ with $\sigma:E^{or}\to O(1)$ satisfy $CD^\sigma(0,\infty)$. Then for all $1\leq k\leq N$, there exists an absolute constant $C$ such that
\begin{equation}\label{eq:O1coro}
 \sqrt{\lambda_k^\sigma}\leq C\sqrt{D_G^{non}}D_G^{nor}kh_1^\sigma.
\end{equation}
\end{coro}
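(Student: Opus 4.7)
The plan is to obtain \eqref{eq:O1coro} as a direct consequence of two results already established in this section: the eigenvalue ratio estimate Theorem \ref{thm:EigenRatio} and the first-order instance of the main Buser inequality Theorem \ref{thm:Buser}. The entire argument is a short composition, so there is no real obstacle beyond bookkeeping of constants.

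First I would invoke Theorem \ref{thm:EigenRatio}, which under the hypothesis $CD^\sigma(0,\infty)$ gives an absolute constant $C_1$ such that
\[
  \lambda_k^\sigma \leq C_1 D_G^{nor}D_G^{non}k^2\lambda_1^\sigma
\]
for every $1\leq k\leq N$. Taking square roots yields
\[
  \sqrt{\lambda_k^\sigma} \leq \sqrt{C_1}\,\sqrt{D_G^{nor}D_G^{non}}\,k\,\sqrt{\lambda_1^\sigma}.
\]

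Next I would apply Theorem \ref{thm:Buser} specialized to $k=1$ and $d=1$ (which is the relevant dimension for an $O(1)$ signature). This yields
\[
  \sqrt{\lambda_1^\sigma}\leq 4\sqrt{\log 2}\,\sqrt{D_G^{nor}}\,h_1^\sigma.
\]
Substituting this bound into the previous display immediately produces
\[
  \sqrt{\lambda_k^\sigma}\leq C\,\sqrt{D_G^{non}}\,D_G^{nor}\,k\,h_1^\sigma
\]
for an absolute constant $C$ obtained by multiplying $\sqrt{C_1}$ with $4\sqrt{\log 2}$, which is precisely \eqref{eq:O1coro}.

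There is nothing delicate to verify: the hypothesis $CD^\sigma(0,\infty)$ is exactly what both Theorem \ref{thm:Buser} and Theorem \ref{thm:EigenRatio} require, and the factor of $\sqrt{D_G^{nor}}$ from the first-order Buser inequality combines cleanly with the $\sqrt{D_G^{nor}D_G^{non}}\,k$ factor from the ratio estimate to give $\sqrt{D_G^{non}}\,D_G^{nor}\,k$. Thus the corollary reduces to replacing the higher-order Cheeger constant $h_k^\sigma$ in the naive application of Theorem \ref{thm:Buser} by the first-order constant $h_1^\sigma$, at the cost of the additional $\sqrt{D_G^{non}}D_G^{nor}$ prefactor inherited from the ratio estimate \eqref{eq:o1Buser}.
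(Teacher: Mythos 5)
Your proof is correct and follows essentially the same route as the paper, which likewise obtains the corollary by composing the eigenvalue ratio estimate of Theorem \ref{thm:EigenRatio} (equivalently, inequality (\ref{eq:o1Buser})) with the first-order Buser bound $\sqrt{\lambda_1^\sigma}\leq 4\sqrt{(\log 2)D_G^{nor}}\,h_1^\sigma$ from Theorem \ref{thm:Buser}. The bookkeeping of the degree factors $\sqrt{D_G^{nor}D_G^{non}}\cdot\sqrt{D_G^{nor}}=D_G^{nor}\sqrt{D_G^{non}}$ and of the absolute constant is as you state.
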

\begin{proof}
 Combining (\ref{eq:o1Buser}) with Theorem \ref{thm:EigenRatio} leads to this result immediately.
\end{proof}
\begin{remark}
Comparing this result with Theorem \ref{thm:Buser}, the advantage of the estimate (\ref{eq:O1coro}) lies in the fact that $h_1^\sigma\leq h_k^\sigma$ and that the order of $k$ in (\ref{eq:O1coro}) is lower. However, in the estimate (\ref{eq:O1coro}), the orders of the degrees $D_G^{nor}$ and $D_G^{non}$ are higher than in Theorem \ref{thm:Buser}.
\end{remark}
Finally, we observe that Theorem \ref{thm:AtayLiu} and hence the estimate in Corollary \ref{cor:BuserO1} can not be true for general signatures $\sigma: E^{or}\to H$, even in the $1$-dimensional case $H=U(1)$. To explain the reason, let us revisit the example of a signed triangle.

\begin{example}[Signed Triangle]\label{example:section 7} The example of a signed triangle, discussed in Section \ref{subsection:triangle}, carries a $U(1)$ signature $\sigma$ is assigned (see Figure \ref{F1}). If $\mathrm{Re}(s)$ tends to $1$, i.e., if the signature on the triangle tends to be balanced, we observe that $\lambda_2^\sigma$ has a positive lower bound (see Figure \ref{FtriangleEigenJump}), while both $\lambda_1^\sigma$ and $h_1^\sigma$ tend to zero, but at a different rate (see Figure \ref{FtriangleComp}). In fact, by Theorem \ref{thm:Buser}, we have
\begin{equation}\label{eq:triangleBuser2}
 \lambda_1^\sigma\leq 32\log 2(h_1^\sigma)^2.
\end{equation}
 Assume that Theorem \ref{thm:AtayLiu} holds in this case for $k=2$. Combining this with (\ref{eq:triangleBuser2}), we obtain $1\leq Ch_1^\sigma$ for some absolute constant $C>0$. This is a contradiction. Hence, Theorem \ref{thm:AtayLiu} cannot hold for more general signatures.
\end{example}

\appendix

\section{\\Curvature and Cheeger constants on Cartesian products}\label{section:appendixCurCheeCar}
In this section, we discuss the $CD^\sigma$ inequality and the Cheeger constants on the Cartesian product of two graphs.
For two graphs, $G_1=(V_1, E_1)$ and $G_2=(V_2, E_2)$, their Cartesian product $G_1\times G_2=(V_1\times V_2, E_{12})$ is defined as follows. Any two vertices $(x_1,y_1), (x_2,y_2)\in V_1\times V_2$ are connected by an edge in $E_{12}$ if and only if
\begin{equation*}
\text{either}\,\,\{x_1,x_2\}\in E_1, y_1=y_2 \,\,\,\text{ or }\,\,\,x_1=x_2, \{y_1, y_2\}\in E_2.
\end{equation*}

\subsection{Curvature on Cartesian Products}
We first discuss the simpler case of graphs with constant vertex measures.

\subsubsection{Graphs with constant vertex measures}
Given two graphs $G_1=(V_1, E_1,w_1)$ and $G_2=(V_2,E_2,w_2)$, we assign the following edge weights to the Cartesian product $G_1\times G_2=(V_1\times V_2, E_{12})$:
\begin{align}
w_{12,(x_1,y)(x_2,y)}:=w_{1,x_1x_2}, \,\,\text{ for any }\,\,\{x_1,x_2\}\in E_1, y\in V_2;\notag\\
w_{12,(x,y_1)(x,y_2)}:=w_{2,y_1y_2}, \,\,\text{ for any }\,\,\{y_1,y_2\}\in E_2, x\in V_1.\label{eq:weightCartesianI}
\end{align}

Let $\sigma_i: E_i^{or}\to H_i, i=1,2$ be the signatures of $G_i, i=1,2$, respectively. We need to find a proper construction of the signature on the Cartesian product graph $G_1\times G_2$.

We first consider the case that
$$H_1=H_2:=H=O(d)\,\,\text{ or }\,\, U(d), \,\,\text{ for some }\,\, d\in \mathbb{Z}_{>0}.$$
In this case, we define the signature $\sigma_{12}: E_{12}^{or}\to H$ as follows
\begin{align}
\sigma_{12, (x_1,y)(x_2,y)}&:=\sigma_{1,x_1x_2}, \,\,\text{ for any }\,\,(x_1,x_2)\in E_1^{or}, y\in V_2;\notag\\
\sigma_{12, (x,y_1)(x,y_2)}&:=\sigma_{2,y_1y_2}, \,\,\text{ for any }\,\,(y_1,y_2)\in E_2^{or}, x\in V_1.\label{eq:signCartesianI}
\end{align}
Let $\Sigma_i$ be the \emph{signature group} of the graph $G_i$ with $\sigma_i$ (recall the definition of the signature group at the end of Section \ref{subsection:Connection Laplacian}). We say the two subgroups $\Sigma_1$ and $\Sigma_2$ of $H$ \emph{commute}, if for any $s_1\in \Sigma_1$, $s_2\in \Sigma_2$, we have $s_1s_2=s_2s_1$.

\begin{thm}\label{thm:CartesianI}
Let $(G_1, 1_{\mathbf{V_1}}, \sigma_1)$ and $(G_2, 1_{\mathbf{V_2}}, \sigma_2)$ be two graphs with $\sigma_i: E_i^{or}\to~H,$ $i=1,2$. Assume that they satisfy
$$CD^{\sigma_1}(K_1,n_1)\,\,\text{ and }\,\,CD^{\sigma_2}(K_2, n_2),$$ respectively. If their signature groups $\Sigma_1$ and $\Sigma_2$ commute, then their Cartesian product graph $(G_1\times G_2, 1_{\mathbf{V_1\times V_2}}, \sigma_{12})$, with the edge weight $w_{12}$ given in (\ref{eq:weightCartesianI}), satisfies
$$CD^{\sigma_{12}}(K_1\wedge K_2, n_1+n_2),$$ where $K_1\wedge K_2:=\min\{K_1, K_2\}$.
\end{thm}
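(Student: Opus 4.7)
My plan is to tensorize the curvature-dimension inequality along the two factors via three steps: additive decompositions of $\Delta^{\sigma_{12}}$ and $\Gamma^{\sigma_{12}}$, a pointwise tensorization identity for $\Gamma_2^{\sigma_{12}}$ with a manifestly nonnegative cross term, and a Cauchy--Schwarz combination of the dimension parameters.

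For the setup I write $\Delta_i^{\sigma_i}$ (respectively $\Delta_i$) for the connection (respectively unsigned graph) Laplacian of $G_i$ applied to a function $f : V_1 \times V_2 \to \mathbb{K}^d$ in its $i$-th argument while the other is held fixed. Because both vertex measures are constant and $\sigma_{12}$, $w_{12}$ are defined via (\ref{eq:signCartesianI}) and (\ref{eq:weightCartesianI}), one checks directly that $\Delta^{\sigma_{12}} = \Delta_1^{\sigma_1} + \Delta_2^{\sigma_2}$ and $\Delta = \Delta_1 + \Delta_2$ on the product. Substituting these splittings into (\ref{eq:introGamma}) yields the additive decomposition $\Gamma^{\sigma_{12}}(f,g) = \Gamma^{(1)}(f,g) + \Gamma^{(2)}(f,g)$, where $\Gamma^{(i)}(f,g)$ is the $\sigma_i$-Gamma form of the $i$-th factor evaluated with the other variable frozen.

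The crux is the pointwise tensorization identity
\[
\Gamma_2^{\sigma_{12}}(f)(x,y) = \Gamma_2^{(1)}(f)(x,y) + \Gamma_2^{(2)}(f)(x,y) + \tfrac{1}{2}\sum_{x'\sim_1 x,\, y'\sim_2 y} w_{1,xx'} w_{2,yy'} \bigl|D_{12}f(x,x',y,y')\bigr|^2,
\]
where $D_{12}f := \sigma_{1,xx'}\sigma_{2,yy'} f(x',y') - \sigma_{1,xx'}f(x',y) - \sigma_{2,yy'}f(x,y') + f(x,y)$ is the mixed signed second difference over the elementary square with corners $(x,y),(x',y),(x,y'),(x',y')$. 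To prove it I substitute the additive splittings into (\ref{eq:introGammatwo}) and regroup the resulting twelve terms into four brackets: the diagonal brackets $2\Gamma_2^{(1)}(f)$ and $2\Gamma_2^{(2)}(f)$, together with two symmetric cross contributions of shape $\Delta_2\Gamma^{(1)}(f) - 2\,\mathrm{Re}\,\Gamma^{(1)}(f,\Delta_2^{\sigma_2}f)$ and its mirror under $1\leftrightarrow 2$. I then expand each cross contribution using Proposition \ref{pro:Gammasigma}(i) and the definitions of the signed Laplacians; invoking the commutativity $\sigma_{1,xx'}\sigma_{2,yy'}=\sigma_{2,yy'}\sigma_{1,xx'}$ (which holds because $\Sigma_1$ and $\Sigma_2$ commute as subgroups of $H$) to merge the two orderings that otherwise appear along the two diagonal paths of the square, and the identity $(\sigma^{-1})^T\overline{\sigma^{-1}}=\mathrm{I}_d$ valid for $\sigma \in O(d) \cup U(d)$ to absorb a surviving conjugate factor, each cross contribution collapses to $\tfrac12\sum w_{1,xx'}w_{2,yy'}|D_{12}f|^2$, which is manifestly nonnegative.

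To conclude I apply $CD^{\sigma_i}(K_i,n_i)$ pointwise to each $\Gamma_2^{(i)}(f)(x,y)$ (viewed as a one-factor inequality in the live variable with the other frozen). The gradient bound $K_1\Gamma^{(1)}(f) + K_2\Gamma^{(2)}(f) \geq (K_1\wedge K_2)\,\Gamma^{\sigma_{12}}(f)$ is immediate, while the Laplacian terms are combined via the elementary Cauchy--Schwarz inequality $|u|^2/n_1 + |v|^2/n_2 \geq |u+v|^2/(n_1+n_2)$ applied to $u = \Delta_1^{\sigma_1}f$, $v = \Delta_2^{\sigma_2}f$, yielding $|\Delta^{\sigma_{12}}f|^2/(n_1+n_2)$; adding gives precisely $CD^{\sigma_{12}}(K_1\wedge K_2, n_1+n_2)$. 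The hardest part will be the bookkeeping in the cross-term expansion: the commutativity hypothesis is exactly what forces $D_1D_2f = D_2D_1f$ as a well-defined mixed difference, so that both cross contributions collapse to the same nonnegative square; without it, the two orderings of $\sigma_1,\sigma_2$ along the two diagonal paths of the product square would disagree, and the cross contribution would generally fail to be nonnegative.
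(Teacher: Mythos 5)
Your proposal is correct and follows essentially the same route as the paper's proof: the additive splittings of $\Delta^{\sigma_{12}}$ and $\Gamma^{\sigma_{12}}$, the identity expressing $\Gamma_2^{\sigma_{12}}(f)(x,y)-\Gamma_2^{\sigma_1}(f_y)(x)-\Gamma_2^{\sigma_2}(f^x)(y)$ as $\tfrac12\sum_{x_i,y_k}w_{1,i}w_{2,k}|D_{12}f|^2$ with commutativity used to merge the two orderings into a single perfect square, and the final combination via $|u|^2/n_1+|v|^2/n_2\geq |u+v|^2/(n_1+n_2)$ together with $K_1\Gamma^{(1)}+K_2\Gamma^{(2)}\geq (K_1\wedge K_2)\Gamma^{\sigma_{12}}$ all match the paper's argument (which records the cross term as equation (\ref{eq:gamma2estimate})).
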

Note that the commutativity restriction of $\Sigma_1$ and $\Sigma_2$ is a very natural condition. The intuition behind the above result is that the "mixed structure" in the Cartesian product is "flat". To be precise, we want for two balanced signatures $\sigma_1, \sigma_2$ on $G_1, G_2$ that $\sigma_{12}$ on $G_1\times G_2$ is also balanced. In Figure \ref{F3}, we show a typical new cycle created in the process of taking the Cartesian product, where $\{x_i,x\}\in E_1$ and $\{y_k,y\}\in E_2$.
Since $\Sigma_1$ and $\Sigma_2$ commute, the signature of this cycle, given by
$\sigma_{1,xx_i}\sigma_{2,yy_k}\sigma_{1,xx_i}^{-1}\sigma_{2,yy_k}^{-1}$, is trivial.
\begin{figure}[h]
\centering
\includegraphics[width=0.35\textwidth]{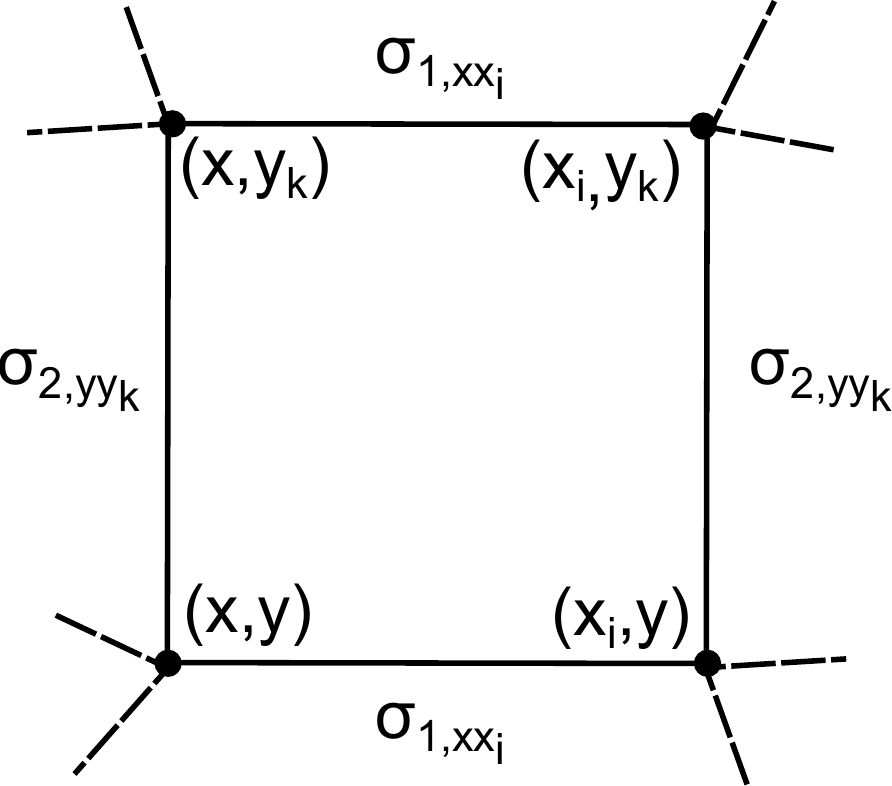}
\caption{A typical cycle in the Cartesian product graph\label{F3}}
\end{figure}
\begin{proof}[Proof of Theorem \ref{thm:CartesianI}] Let $f: V_1\times V_2\to \mathbb{K}^d$ be any $\mathbb{K}^d$ valued function on the product graph. For fixed $y\in V_2$, we define $f_y(\cdot):=f(\cdot,y)$ to be a $\mathbb{K}^d$ valued function on $G_1$. Similarly, we define the function $f^x(\cdot):=f(x,\cdot): V_2\to \mathbb{K}^d$.

By the construction of $\sigma_{12}$, it is straightforward to check that
\begin{align}
\Delta^{\sigma_{12}}f(x,y)&=\Delta^{\sigma_1}f_y(x)+\Delta^{\sigma_2}f^x(y),\label{eq:splitLaplacian}\\
\Gamma^{\sigma_{12}}(f)(x,y)&=\Gamma^{\sigma_1}(f_y)(x)+\Gamma^{\sigma_2}(f^x)(y).\label{eq:splitGamma}
\end{align}
For the operator $\Gamma_2^{\sigma_{12}}$, we claim that
\begin{equation}\label{eq:splitGammat2}
\Gamma_2^{\sigma_{12}}(f)(x,y)\geq \Gamma_2^{\sigma_1}(f_y)(x)+\Gamma_2^{\sigma_2}(f^x)(y).
\end{equation}
Once inequality (\ref{eq:splitGammat2}) is verified, we apply the $CD^{\sigma}$ inequalities on $G_1$ and $G_2$ to estimate
\begin{align}
&\Gamma_2^{\sigma_{12}}(f)(x,y)\notag\\
\geq &\frac{1}{n_1}|\Delta^{\sigma_1}f_y(x)|^2+K_1\Gamma^{\sigma_1}(f_y)(x)+\frac{1}{n_2}|\Delta^{\sigma_2}f^x(y)|^2+K_2\Gamma^{\sigma_2}(f^x)(y)\notag\\
\geq &\frac{1}{n_1+n_2}\left(|\Delta^{\sigma_1}f_y(x)|+|\Delta^{\sigma_2}f^x(y)|\right)^2+( K_1\wedge K_2)\left(\Gamma^{\sigma_1}(f_y)(x)+\Gamma^{\sigma_2}(f^x)(y)\right)\notag\\
\geq&\frac{1}{n_1+n_2}|\Delta^{\sigma_{12}}f(x,y)|^2+(K_1\wedge K_2)\Gamma^{\sigma_{12}}(f)(x,y).\label{eq:tightI}
\end{align}
In the third inequality above, we used the triangle inequality and the equalities (\ref{eq:splitLaplacian}) and (\ref{eq:splitGamma}).

Hence, it only remains to prove the claim (\ref{eq:splitGammat2}). Recall that
\begin{equation}\label{eq:gamma2Cartesian}
2\Gamma_2^{\sigma_{12}}(f)(x,y)=\Delta\Gamma^{\sigma_{12}}(f)(x,y)-\Gamma^{\sigma}(f, \Delta^{\sigma_{12}}f)(x,y)-\Gamma^{\sigma}(\Delta^{\sigma_{12}}f,f)(x,y).
\end{equation}
For simplicity, we denote the neighbors of $x$ in $V_1$ by $x_i$ and the neighbors of $y$ in $V_2$ by $y_k$. We will then write, for short,
\begin{equation*}
w_{1,i}:=w_{1,xx_i}, w_{2,k}:=w_{2,yy_k}, \,\,\text{and}\,\,\sigma_{1,i}:=\sigma_{1,xx_i}, \sigma_{2,k}:=\sigma_{2,yy_k},
\end{equation*}
and $\sum_{x_i}$ ($\sum_{y_k}$, resp.) the summation over all neighbors of $x\in V_1$ ($y\in V_2$, resp.).
We first calculate
\begin{align*}
&\Delta\Gamma^{\sigma_{12}}(f)(x,y)\notag=\\
&\underbrace{\sum_{x_i}w_{1,i}\left(\Gamma^{\sigma_{12}}(f)(x_i,y)-\Gamma^{\sigma_{12}}(f)(x,y)\right)}_{=:L_1}
+\underbrace{\sum_{y_k}w_{2,k}\left(\Gamma^{\sigma_{12}}(f)(x,y_k)-\Gamma^{\sigma_{12}}(f)(x,y)\right)}_{=:L_2}.
\end{align*}
Applying (\ref{eq:splitGamma}), we obtain
\begin{align}
L_1=&\sum_{x_i}w_{1,i}\left(\Gamma^{\sigma_{1}}(f_y)(x_i)+\Gamma^{\sigma_2}(f^{x_i})(y)-\Gamma^{\sigma_{1}}(f_y)(x)-\Gamma^{\sigma_2}(f^x)(y)\right)\notag\\
=&\Delta\Gamma^{\sigma_1}(f_y)(x)+\frac{1}{2}\sum_{x_i, y_k}w_{1,i}w_{2,k}(|\sigma_{2,k}f(x_i,y_k)-f(x_i,y)|^2\notag\\
&\hspace{6cm}-|\sigma_{2,k}f(x,y_k)-f(x,y)|^2).\label{eq:L1}
\end{align}
Similarly, we have
\begin{align}
L_2
=&\Delta\Gamma^{\sigma_2}(f^x)(y)+\frac{1}{2}\sum_{x_i, y_k}w_{1,i}w_{2,k}(|\sigma_{1,i}f(x_i,y_k)-f(x,y_k)|^2\notag\\
&\hspace{6cm}-|\sigma_{i,i}f(x_i,y)-f(x,y)|^2).\label{eq:L2}
\end{align}
Now we calculate the remaining terms in (\ref{eq:gamma2Cartesian}).
\begin{align}
&\Gamma^{\sigma_{12}}(f, \Delta^{\sigma_{12}}f)(x,y)\notag\\
=&\frac{1}{2}\sum_{x_i}w_{1,i}(\sigma_{1,i}f(x_i,y)-f(x,y))^T(\overline{\sigma_{1,i}\Delta^{\sigma_{12}}f(x_i,y)-\Delta^{\sigma_{12}}f(x,y)})\notag\\
&+\frac{1}{2}\sum_{y_k}w_{2,k}(\sigma_{2,k}f(x,y_k)-f(x,y))^T(\overline{\sigma_{2,k}\Delta^{\sigma_{12}}f(x,y_k)-\Delta^{\sigma_{12}}f(x,y)})\notag\\
=:&M_1+M_2.
\end{align}
Applying the equality (\ref{eq:splitLaplacian}), we obtain
\begin{align}
M_1=&\Gamma^{\sigma_1}(f_y, \Delta^{\sigma_1}f_y)(x)\notag\\
&\hspace{.5cm}+\frac{1}{2}\sum_{x_i}w_{1,i}(\sigma_{1,i}f(x_i,y)-f(x,y))^T(\overline{\sigma_{1,i}\Delta^{\sigma_2}f^{x_i}(y)-\Delta^{\sigma_2}f^{x}(y)}).
\end{align}
Hence, we get
\begin{align}
&M_1-\Gamma^{\sigma_1}(f_y, \Delta^{\sigma_1}f_y)(x)\notag\\
=&\frac{1}{2}\sum_{x_i, y_k}w_{1,i}w_{2,k}[(\sigma_{1,i}f(x_i,y)-f(x,y))^T(\overline{\sigma_{1,i}\sigma_{2,k}f(x_i,y_k)-\sigma_{2,k}f(x,y_k)})\notag\\
&\hspace{6.5cm}-|\sigma_{1,i}f(x_i,y)-f(x,y)|^2].\label{eq:M1}
\end{align}
Similarly, we have
\begin{align}
&M_2-\Gamma^{\sigma_2}(f^x, \Delta^{\sigma_2}f^x)(y)\notag\\
=&\frac{1}{2}\sum_{x_i, y_k}w_{1,i}w_{2,k}[(\sigma_{2,k}f(x,y_k)-f(x,y))^T(\overline{\sigma_{2,k}\sigma_{1,i}f(x_i,y_k)-\sigma_{1,i}f(x_i,y)})\notag\\
&\hspace{6.5cm}-|\sigma_{2,k}f(x,y_k)-f(x,y)|^2].\label{eq:M2}
\end{align}
Combining (\ref{eq:L1}) and (\ref{eq:M2}), we arrive at
\begin{align}
&(L_1-\Delta\Gamma^{\sigma_1}(f_y)(x))-(M_2-\Gamma^{\sigma_2}(f^x, \Delta^{\sigma_2}f^x)(y))-(\overline{M_2-\Gamma^{\sigma_2}(f^x, \Delta^{\sigma_2}f^x)(y)})\notag\\
=&\frac{1}{2}\sum_{x_i, y_k}w_{1,i}w_{2,k}[|\sigma_{2,k}f(x_i,y_k)-f(x_i,y)|^2+|\sigma_{2,k}f(x,y_k)-f(x,y)|^2\notag\\
&\hspace{2cm}+(\sigma_{2,k}f(x,y_k)-f(x,y))^T(\overline{\sigma_{2,k}\sigma_{1,i}f(x_i,y_k)-\sigma_{1,i}f(x_i,y)})\notag\\
&\hspace{2cm}+(\overline{\sigma_{2,k}f(x,y_k)-f(x,y)})^T(\sigma_{2,k}\sigma_{1,i}f(x_i,y_k)-\sigma_{1,i}f(x_i,y))].
\end{align}
Since $\Sigma_1$ and $\Sigma_2$ commute, we have
\begin{equation*}
\sigma_{2,k}\sigma_{1,i}=\sigma_{1,i}\sigma_{2,k}.
\end{equation*}
Therefore, we obtain
\begin{align}
&(L_1-\Delta\Gamma^{\sigma_1}(f_y)(x))-(M_2-\Gamma^{\sigma_2}(f^x, \Delta^{\sigma_2}f^x)(y))-(\overline{M_2-\Gamma^{\sigma_2}(f^x, \Delta^{\sigma_2}f^x)(y)})\notag\\
=&\frac{1}{2}\sum_{x_i, y_k}w_{1,i}w_{2,k}|\sigma_{1,i}\sigma_{2,k}f(x_i,y_k)-\sigma_{1,i}f(x_i,y)-\sigma_{2,k}f(x,y_k)+f(x,y)|^2.\label{eq:L1M2}
\end{align}
Similarly, by combining (\ref{eq:L2}) and (\ref{eq:M1}), we obtain
\begin{align}
&(L_2-\Delta\Gamma^{\sigma_2}(f^x)(y))-(M_1-\Gamma^{\sigma_1}(f_y, \Delta^{\sigma_1}f_y)(x))-(\overline{M_1-\Gamma^{\sigma_1}(f_y, \Delta^{\sigma_1}f_y)(x)})\notag\\
=&\frac{1}{2}\sum_{x_i, y_k}w_{1,i}w_{2,k}|\sigma_{2,k}\sigma_{1,i}f(x_i,y_k)-\sigma_{2,k}f(x,y_k)-\sigma_{1,i}f(x_i,y)+f(x,y)|^2.\label{eq:L2M1}
\end{align}
Adding (\ref{eq:L1M2}) and $(\ref{eq:L2M1})$, and using (\ref{eq:gamma symmetric}), we get
\begin{align}
&2\Gamma_2^{\sigma_{12}}(f)(x,y)-2\Gamma^{\sigma_1}_2(f_y)(x)-2\Gamma_2^{\sigma_2}(f^x)(y)\notag\\
=&\sum_{x_i, y_k}w_{1,i}w_{2,k}|\sigma_{2,k}\sigma_{1,i}f(x_i,y_k)-\sigma_{2,k}f(x,y_k)-\sigma_{1,i}f(x_i,y)+f(x,y)|^2\geq 0.\label{eq:gamma2estimate}
\end{align}
This proves (\ref{eq:splitGammat2}).
\end{proof}

\begin{remark}[Tightness of Theorem \ref{thm:CartesianI}]
The estimate in Theorem \ref{thm:CartesianI} is tight at least in the case of taking the Cartesian product of $(G, \mathbf{1}_{V}, \sigma)$ with itself, assuming that its signature group $\Sigma$ is abelian. That is, for any given $n\in \mathbb{R}_+$, the precise lower curvature bounds satisfy
$$K_{2n}(G\times G, \mathbf{1}_{V\times V}, \sigma_{12})=K_n(G, \mathbf{1}_{V}, \sigma).$$
Note that the tightness of Theorem \ref{thm:CartesianI} lies in the tightness of (\ref{eq:tightI}) and (\ref{eq:gamma2estimate}). By assumption, there exists a function $f: V\to \mathbb{K}^d$ and a vertex $x\in V$ such that
$$\Gamma_2^\sigma(f)(x)=\frac{1}{n}|\Delta^\sigma f(x)|^2+K\Gamma^\sigma(f)(x)\,\,\text{ and }\,\,\Gamma^\sigma{f}(x)\neq 0.$$
Then we can choose a function $F:V\times V\to \mathbb{K}^d$ satisfying, locally, around the vertex $(x,x)\in V\times V$,
\begin{enumerate}[(i)]
  \item $F(x,x):=f(x)$;
  \item $F(x_i,x):=f(x_i)$, for all $x_i\sim x$;
  \item $F(x,x_k):=f(x_k)$, for all $x_k\sim x$;
  \item $F(x_i,x_k):=\sigma_{xx_i}^{-1}\sigma_{xx_k}^{-1}(\sigma_{xx_k}f(x_k)+\sigma_{xx_i}^{-1}f(x_i)-f(x))$, for all $x_i\sim x$ and $x_k\sim x$.
\end{enumerate}
Note that (i-iii) implies $\Delta^\sigma F_x(x)=\Delta^\sigma F^x(x)$ and, hence, (\ref{eq:tightI}) holds with equalities. Property (iv) ensures that (\ref{eq:gamma2estimate}) holds also with equality. This shows the tightness of the result.
\end{remark}

Next, we discuss the situation when the two groups $H_1$ and $H_2$ are different. We assume that
$$H_1=O(d_1), H_2=O(d_2),\,\,\text{ or }\,\,H_1=U(d_1), H_2=U(d_2), \,\,\text{for some}\,\, d_1,d_2\in \mathbb{Z}_{>0},$$
where $d_1, d_2$ can be different integers. In such a general situation, we construct the signature $\widehat{\sigma}_{12}: E_{12}^{or}\to H_1\otimes H_2$ on the Cartesian product graph in the following way:
\begin{align}
\widehat{\sigma}_{12, (x_1,y)(x_2,y)}&:=\sigma_{1,x_1x_2}\otimes \mathrm{I}_{d_2}, \,\,\text{ for any }\,\,(x_1,x_2)\in E_1^{or}, y\in V_2;\notag\\
\widehat{\sigma}_{12, (x,y_1)(x,y_2)}&:=\mathrm{I}_{d_1}\otimes\sigma_{2,y_1y_2}, \,\,\text{ for any }\,\,(y_1,y_2)\in E_2^{or}, x\in V_1,\label{eq:signCartesianII}
\end{align}
where $\mathrm{I}_{d_i}$ is the identity matrix of size $d_i\times d_i$, $i=1,2$.
\begin{thm}\label{thm:CartesianII}
Let $(G_1, 1_{\mathbf{V_1}}, \sigma_1)$ and $(G_2, 1_{\mathbf{V_2}}, \sigma_2)$ be two graphs with $\sigma_i: E_i^{or}\to~H_i,$ $i=1,2$. Assume that they satisfy
$$CD^{\sigma_1}(K_1,n_1)\,\,\text{ and }\,\,CD^{\sigma_2}(K_2, n_2),$$
respectively. Then their Cartesian product graph $(G_1\times G_2, 1_{\mathbf{V_1\times V_2}}, \widehat{\sigma}_{12})$ with the edge weight $w_{12}$ given in (\ref{eq:weightCartesianI}) satisfies
$$CD^{\widehat{\sigma}_{12}}(K_1\wedge K_2, n_1+n_2).$$
\end{thm}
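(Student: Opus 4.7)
The plan is to reduce Theorem \ref{thm:CartesianII} to Theorem \ref{thm:CartesianI} by lifting both signatures to a common group via tensoring with identity matrices, so that the commutativity hypothesis of Theorem \ref{thm:CartesianI} is automatically satisfied.

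Set $H := O(d_1 d_2)$ in the orthogonal case, or $H := U(d_1 d_2)$ in the unitary case. Define the lifted signatures
\[
 \sigma_1' := \sigma_1 \otimes \mathrm{I}_{d_2}: E_1^{or} \to H, \qquad \sigma_2' := \mathrm{I}_{d_1} \otimes \sigma_2: E_2^{or} \to H.
\]
By Corollary \ref{cor:tensor}, $(G_1, \mathbf{1}_{V_1}, \sigma_1')$ satisfies $CD^{\sigma_1'}(K_1, n_1)$ and $(G_2, \mathbf{1}_{V_2}, \sigma_2')$ satisfies $CD^{\sigma_2'}(K_2, n_2)$, since these curvature-dimension inequalities are preserved under tensoring with the identity.

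The key observation is that the signature groups $\Sigma_1' \subseteq H$ and $\Sigma_2' \subseteq H$ of these lifted signatures commute. Indeed, $\Sigma_1'$ is generated by matrices of the form $A \otimes \mathrm{I}_{d_2}$ with $A \in O(d_1)$ or $U(d_1)$, while $\Sigma_2'$ is generated by matrices of the form $\mathrm{I}_{d_1} \otimes B$ with $B \in O(d_2)$ or $U(d_2)$. The basic identity of the Kronecker product gives
\[
 (A \otimes \mathrm{I}_{d_2})(\mathrm{I}_{d_1} \otimes B) = A \otimes B = (\mathrm{I}_{d_1} \otimes B)(A \otimes \mathrm{I}_{d_2}),
\]
so $\Sigma_1'$ and $\Sigma_2'$ commute elementwise.

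Now we apply Theorem \ref{thm:CartesianI} to $(G_1, \mathbf{1}_{V_1}, \sigma_1')$ and $(G_2, \mathbf{1}_{V_2}, \sigma_2')$. This yields a signature on $G_1 \times G_2$ built by the rule (\ref{eq:signCartesianI}), namely the map assigning $\sigma_{1,x_1x_2} \otimes \mathrm{I}_{d_2}$ to horizontal edges and $\mathrm{I}_{d_1} \otimes \sigma_{2,y_1y_2}$ to vertical edges, which is exactly the signature $\widehat{\sigma}_{12}$ of (\ref{eq:signCartesianII}). Theorem \ref{thm:CartesianI} then gives
\[
 CD^{\widehat{\sigma}_{12}}(K_1 \wedge K_2, n_1 + n_2)
\]
on $(G_1 \times G_2, \mathbf{1}_{V_1 \times V_2}, \widehat{\sigma}_{12})$, which is the desired conclusion.

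The argument is almost bookkeeping once the correct reformulation is found; the only step requiring care is verifying that the lifted signature produced by Theorem \ref{thm:CartesianI} genuinely coincides with $\widehat{\sigma}_{12}$ as defined in (\ref{eq:signCartesianII}), and that commutativity of $\Sigma_1'$ and $\Sigma_2'$ holds globally (not just for generators). Both points follow directly from the multiplicative property $(A\otimes \mathrm{I})(\mathrm{I}\otimes B) = (\mathrm{I}\otimes B)(A\otimes \mathrm{I})$, so no real obstacle arises.
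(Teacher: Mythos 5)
Your proposal is correct and follows essentially the same route as the paper's own proof: lift both signatures by tensoring with identities, invoke Corollary \ref{cor:tensor} to preserve the curvature-dimension inequalities, observe that the lifted signature groups commute via $(A\otimes \mathrm{I})(\mathrm{I}\otimes B)=(\mathrm{I}\otimes B)(A\otimes \mathrm{I})$, and apply Theorem \ref{thm:CartesianI}. The paper's proof is exactly this reduction, so no further comparison is needed.
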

This is an immediate consequence of Theorem \ref{thm:CartesianI} and Corollary \ref{cor:tensor}.
\begin{proof}[Proof of Theorem \ref{thm:CartesianII}]
By Corollary \ref{cor:tensor}, we know that $(G_1, 1_{\mathbf{V_1}}, \sigma_1\otimes \mathrm{I}_{d_2})$ and $(G_2, 1_{\mathbf{V_2}}, \mathrm{I}_{d_1}\otimes\sigma_2)$ satisfy $CD^{\sigma_1\otimes \mathrm{I}_{d_2}}(K_1,n_1)$ and $CD^{\mathrm{I}_{d_1}\otimes\sigma_2}(K_2, n_2)$, respectively. Note that for any $(x, x_i)\in E_1^{or}$ and $(y, y_k)\in E_2^{or}$, we have
\begin{equation}
(\sigma_{1,xx_i}\otimes \mathrm{I}_{d_2})(\mathrm{I}_{d_1}\otimes\sigma_{2,yy_k})=(\mathrm{I}_{d_1}\otimes\sigma_{2,yy_k})(\sigma_{1,xx_i}\otimes \mathrm{I}_{d_2}).
\end{equation}
That is, the corresponding signature groups of $(G_1, 1_{\mathbf{V_1}}, \sigma_1\otimes \mathrm{I}_{d_2})$ and $(G_2, 1_{\mathbf{V_2}}, \mathrm{I}_{d_1}\otimes\sigma_2)$ commute. Hence, we can apply Theorem \ref{thm:CartesianI} and finish the proof.
\end{proof}

\begin{remark}[Vertex measure]\label{remark:vertex measure}
In Theorems \ref{thm:CartesianI} and \ref{thm:CartesianII}, we use the particular vertex measure $\mu(x)=1$ for all vertices $x$. In fact, we have more flexibility about those measures. Assume that the vertex measures of $G_1, G_2, G_1\times G_2$ take constant values $\nu_1, \nu_2, \nu_{12}\in \mathbb{R}$, respectively. Then under the assumptions of Theorem \ref{thm:CartesianI} (replacing $\mathbf{1}_{V_i}$ by $\nu_i\cdot \mathbf{1}_{V_i}$), we have that the graph $(G_1\times G_2, \nu_{12}\cdot \mathbf{1}_{V_1\times V_2}, \sigma_{12})$ satisfies
$$CD^{\sigma_{12}}\left(\frac{1}{\nu_{12}}(\nu_1K_1\wedge\nu_2K_2), n_1+n_2\right).$$
The result in Theorem \ref{thm:CartesianII} can be generalized similarly.
\end{remark}

\subsubsection{Graphs with nonconstant vertex measures}
For two graphs $(G_1,\mu_1)$ and $(G_2, \mu_2)$ whose vertex measures are not necessarily constant, we modify the definition of the edge weights of their Cartesian product. In \cite{ChungTetali98}, Chung and Tetali introduced the following
edge weight for the Cartesian product $G_1\times G_2=(V_1\times V_2, E_{12})$,
\begin{align}
w_{12,(x_1,y)(x_2,y)}^{\Box}:=w_{1,x_1x_2}\mu_2(y), \,\,\text{ for any }\,\,\{x_1,x_2\}\in E_1, y\in V_2;\notag\\
w_{12,(x,y_1)(x,y_2)}^{\Box}:=w_{2,y_1y_2}\mu_1(x), \,\,\text{ for any }\,\,\{y_1,y_2\}\in E_2, x\in V_1.\label{eq:weightCartesianII}
\end{align}
and the specific vertex measure
\begin{equation}\label{eq:ChungTetali2}
2\mu_1\mu_2:  V_1\times V_1\ni(x,y)\mapsto 2\mu_1(x)\mu_2(y)\in \mathbb{R}.
\end{equation}
Observe that, in the case $\mu_i=\mathbf{d}_{V_i},\,i=1,2$, we have
\begin{equation*}
2\mu_{1}(x)\mu_2(y)=\sum_{x_i}w_{12,(x_i,y)(x,y)}^{\Box}+\sum_{y_k}w_{12,(x,y_k)(x,y)}^{\Box}.
\end{equation*}
The definitions (\ref{eq:weightCartesianII}) and (\ref{eq:ChungTetali2}) lead to a Laplacian associated  to a natural random walk on the product graph $G_1\times G_2$.

\begin{thm}\label{thm:CartesianIII}
Let $(G_1, \mu_1, \sigma_1)$ and $(G_2, \mu_2, \sigma_2)$ be two graphs with $\sigma_i:~E_i^{or}~\to~H,$ $i=1,2$. Assume that they satisfy
$$CD^{\sigma_1}(K_1,n_1)\,\,\text{ and }\,\,CD^{\sigma_2}(K_2, n_2),$$ respectively. If their signature groups $\Sigma_1$ and $\Sigma_2$ commute, then their Cartesian product graph $(G_1\times G_2, 2\mu_{1}\mu_2, \sigma_{12})$, with the edge weight $w_{12}^{\Box}$ given in (\ref{eq:weightCartesianII}), satisfies
$$CD^{\sigma_{12}}\left(\frac{1}{2}(K_1\wedge K_2), n_1+n_2\right),$$ where $K_1\wedge K_2:=\min\{K_1, K_2\}$.
\end{thm}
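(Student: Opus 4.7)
The plan is to prove Theorem \ref{thm:CartesianIII} in two steps: first establish the analogue of Theorem \ref{thm:CartesianI} for the auxiliary vertex measure $\mu_1\mu_2$ on the Cartesian product, and then halve the curvature by passing to the Chung--Tetali measure $2\mu_1\mu_2$ via the scaling property of $CD^\sigma$.

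For Step 1, I would consider the auxiliary graph $(G_1\times G_2,\,\mu_1\mu_2,\,\sigma_{12})$ with the edge weights $w_{12}^\Box$ of (\ref{eq:weightCartesianII}). Writing $f_y(\cdot) := f(\cdot,y)$ and $f^x(\cdot) := f(x,\cdot)$, a direct computation yields the decompositions
\begin{align*}
\Delta^{\sigma_{12}}f(x,y) &= \Delta^{\sigma_1}f_y(x) + \Delta^{\sigma_2}f^x(y),\\
\Gamma^{\sigma_{12}}(f)(x,y) &= \Gamma^{\sigma_1}(f_y)(x) + \Gamma^{\sigma_2}(f^x)(y),
\end{align*}
exactly as in Theorem \ref{thm:CartesianI}. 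I then expect the entire computation of $\Gamma_2^{\sigma_{12}}$ from the proof of Theorem \ref{thm:CartesianI} to carry over, producing the analogue of (\ref{eq:gamma2estimate}):
\[
2\Gamma_2^{\sigma_{12}}(f)(x,y) - 2\Gamma_2^{\sigma_1}(f_y)(x) - 2\Gamma_2^{\sigma_2}(f^x)(y) = \frac{1}{\mu_1(x)\mu_2(y)}\sum_{x_i,y_k} w_{1,xx_i}w_{2,yy_k}\,|R(x_i,y_k)|^2 \;\geq\; 0,
\]
where $R(x_i,y_k):=\sigma_{2,yy_k}\sigma_{1,xx_i}f(x_i,y_k)-\sigma_{2,yy_k}f(x,y_k)-\sigma_{1,xx_i}f(x_i,y)+f(x,y)$ is the same telescoped expression as in the original proof, and the commutativity of $\Sigma_1, \Sigma_2$ is used in exactly the same place. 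The only new feature is the uniform prefactor $1/(\mu_1(x)\mu_2(y))$ arising from the measure-weighting in the various Laplacians. Combining this inequality with $CD^{\sigma_1}(K_1,n_1)$, $CD^{\sigma_2}(K_2,n_2)$ and the Cauchy--Schwarz inequality $|A|^2/n_1 + |B|^2/n_2 \geq |A+B|^2/(n_1+n_2)$ applied to $A=\Delta^{\sigma_1}f_y(x)$ and $B=\Delta^{\sigma_2}f^x(y)$, exactly as in (\ref{eq:tightI}), yields $CD^{\sigma_{12}}(K_1\wedge K_2, n_1+n_2)$ on the auxiliary graph.

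For Step 2, I would invoke the scaling property noted in the paper: under $\mu \mapsto c\mu$ one has $\Delta^\sigma \mapsto c^{-1}\Delta^\sigma$, $\Gamma^\sigma \mapsto c^{-1}\Gamma^\sigma$, $\Gamma_2^\sigma \mapsto c^{-2}\Gamma_2^\sigma$, and $|\Delta^\sigma f|^2 \mapsto c^{-2}|\Delta^\sigma f|^2$, so that $CD^\sigma(K,n)$ under $\mu$ is equivalent to $CD^\sigma(K/c,n)$ under $c\mu$. Taking $c=2$ converts the conclusion of Step 1 into $CD^{\sigma_{12}}\bigl(\tfrac{1}{2}(K_1\wedge K_2),\,n_1+n_2\bigr)$ on the Chung--Tetali graph $(G_1\times G_2,\,2\mu_1\mu_2,\,\sigma_{12})$, which is the desired conclusion.

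The main technical obstacle sits in Step 1: verifying that the delicate four-term split in the proof of Theorem \ref{thm:CartesianI} (splitting $\Delta\Gamma^{\sigma_{12}}(f)(x,y)$ into $L_1+L_2$ and $\Gamma^{\sigma_{12}}(f,\Delta^{\sigma_{12}}f)(x,y)$ into $M_1+M_2$, then using commutativity to match cross terms) survives once the measures become non-constant. I expect this to be bookkeeping rather than a new conceptual point: the additional $1/\mu_i$ factors inherited from the Laplacian normalization appear symmetrically in $L_i$ and $M_i$, and combine cleanly into the single prefactor $1/(\mu_1(x)\mu_2(y))$ in the residual non-negative sum, leaving the commutativity-based cancellation intact.
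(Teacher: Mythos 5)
Your proposal is correct and follows essentially the same route as the paper: the paper works directly with the measure $2\mu_1\mu_2$, obtaining $\Delta^{\sigma_{12}}f(x,y)=\tfrac12\Delta^{\sigma_1}f_y(x)+\tfrac12\Delta^{\sigma_2}f^x(y)$, $\Gamma^{\sigma_{12}}=\tfrac12\Gamma^{\sigma_1}+\tfrac12\Gamma^{\sigma_2}$ and $\Gamma_2^{\sigma_{12}}\geq\tfrac14\Gamma_2^{\sigma_1}+\tfrac14\Gamma_2^{\sigma_2}$ via the same non-negative residual sum you write down, whereas you first prove the statement for the auxiliary measure $\mu_1\mu_2$ and then factor out the constant $2$ at the end using the scaling property $K_n(G,c\mu,\sigma)=\tfrac1c K_n(G,\mu,\sigma)$. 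This is only a cosmetic reorganization: the key residual identity, the place where commutativity of $\Sigma_1$ and $\Sigma_2$ is used, and the Cauchy--Schwarz combination of the two $CD$ inequalities are identical to the paper's argument.
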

\begin{proof}
The proof is analogous to the proof of Theorem \ref{thm:CartesianI}. We only mention here that, this time, we have
\begin{align*}
\Delta^{\sigma_{12}}f(x,y)&=\frac{1}{2}\Delta^{\sigma_1}f_y(x)+\frac{1}{2}\Delta^{\sigma_2}f^x(y),\\
\Gamma^{\sigma_{12}}(f)(x,y)&=\frac{1}{2}\Gamma^{\sigma_1}(f_y)(x)+\frac{1}{2}\Gamma^{\sigma_2}(f^x)(y),
\end{align*}
and
\begin{equation*}
\Gamma_2^{\sigma_{12}}(f)(x,y)\geq \frac{1}{4}\Gamma_2^{\sigma_1}(f_y)(x)+\frac{1}{4}\Gamma_2^{\sigma_2}(f^x)(y).
\end{equation*}
The last inequality above is derived from
\begin{align}
&2\mu(x)\mu(y)\left(4\Gamma_2^{\sigma_{12}}(f)(x,y)-\Gamma^{\sigma_1}_2(f_y)(x)-\Gamma_2^{\sigma_2}(f^x)(y)\right)\notag\\
=&\sum_{x_i, y_k}w_{1,i}w_{2,k}|\sigma_{2,k}\sigma_{1,i}f(x_i,y_k)-\sigma_{2,k}f(x,y_k)-\sigma_{1,i}f(x_i,y)+f(x,y)|^2\geq 0.\notag
\end{align}
\end{proof}
\begin{remark}
Let us assign a general vertex measure $\mu_{12}$ to the Cartesian product graph. Then, under the assumption of Theorem \ref{thm:CartesianIII}, a proof analogous to the proof of \ref{thm:CartesianI} yields that $(G_1\times G_2,\mu_{12}, \sigma_{12})$, with the edge weight $w_{12}^{\Box}$ given in (\ref{eq:weightCartesianII}), satisfies
$$CD^{\sigma_{12}}\left(\min_{(x,y)\in V_1\times V_2}\frac{\mu_1(x)\mu_2(y)}{\mu_{12}(x,y)}(K_1\wedge K_2), n_1+n_2\right).$$
Note that this general result also includes Theorem \ref{thm:CartesianI} as a particular case.
\end{remark}

A result similar to Theorem \ref{thm:CartesianII} follows immediately from Theorem \ref{thm:CartesianIII}.

\begin{thm}\label{thm:CartesianIV}
Let $(G_1, \mu_1, \sigma_1)$ and $(G_2, \mu_2, \sigma_2)$ be two graphs with $\sigma_i: E_i^{or}\to H_i, i=1,2$. Assume that they satisfy
$$CD^{\sigma_1}(K_1,n_1)\,\,\text{ and }\,\,CD^{\sigma_2}(K_2, n_2),$$
respectively. Then their Cartesian product graph $(G_1\times G_2, \mu_{12}, \widehat{\sigma}_{12})$, with the edge weight $w^{\Box}_{12}$ given in (\ref{eq:weightCartesianII}), satisfies
$$CD^{\widehat{\sigma}_{12}}\left(\min_{(x,y)\in V_1\times V_2}\frac{\mu_1(x)\mu_2(y)}{\mu_{12}(x,y)}(K_1\wedge K_2), n_1+n_2\right).$$
\end{thm}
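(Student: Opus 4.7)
The plan is to deduce Theorem \ref{thm:CartesianIV} from Theorem \ref{thm:CartesianIII} by the same two-step trick that produced Theorem \ref{thm:CartesianII} from Theorem \ref{thm:CartesianI}. The point is that the signature $\widehat{\sigma}_{12}$ on $G_1 \times G_2$ has been defined precisely so that its restrictions to the two ``axis directions'' commute, even when $H_1$ and $H_2$ are a priori unrelated groups of possibly different dimension, because $\sigma_{1,xx'} \otimes \mathrm{I}_{d_2}$ and $\mathrm{I}_{d_1} \otimes \sigma_{2,yy'}$ automatically commute as elements of $H_1 \otimes H_2$.

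First I would replace the two given signatures by their enlarged versions on the product group. By Corollary \ref{cor:tensor}, the graph $(G_1, \mu_1, \sigma_1 \otimes \mathrm{I}_{d_2})$ still satisfies $CD^{\sigma_1 \otimes \mathrm{I}_{d_2}}(K_1, n_1)$, and similarly $(G_2, \mu_2, \mathrm{I}_{d_1} \otimes \sigma_2)$ satisfies $CD^{\mathrm{I}_{d_1} \otimes \sigma_2}(K_2, n_2)$. Both signatures now take values in the common group $H_1 \otimes H_2$, so we are in the setting of Theorem \ref{thm:CartesianIII}.

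Next I would verify the commutativity hypothesis. The signature group $\widetilde{\Sigma}_1$ of $(G_1, \sigma_1 \otimes \mathrm{I}_{d_2})$ consists of elements of the form $s \otimes \mathrm{I}_{d_2}$ with $s \in \Sigma_1$, while $\widetilde{\Sigma}_2$ consists of elements $\mathrm{I}_{d_1} \otimes t$ with $t \in \Sigma_2$. For any such pair, the mixed product formula for tensors gives $(s \otimes \mathrm{I}_{d_2})(\mathrm{I}_{d_1} \otimes t) = s \otimes t = (\mathrm{I}_{d_1} \otimes t)(s \otimes \mathrm{I}_{d_2})$, so $\widetilde{\Sigma}_1$ and $\widetilde{\Sigma}_2$ commute inside $H_1 \otimes H_2$. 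Observe also that the construction (\ref{eq:signCartesianII}) of $\widehat{\sigma}_{12}$ is exactly the construction (\ref{eq:signCartesianI}) applied to $\sigma_1 \otimes \mathrm{I}_{d_2}$ and $\mathrm{I}_{d_1} \otimes \sigma_2$.

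Finally I would apply Theorem \ref{thm:CartesianIII} to the two enlarged graphs with the vertex measure $\mu_{12}$ and the edge weights $w_{12}^{\Box}$ from (\ref{eq:weightCartesianII}); the remark following Theorem \ref{thm:CartesianIII} handles the general vertex measure $\mu_{12}$ and yields the claimed bound $CD^{\widehat{\sigma}_{12}}\bigl(\min_{(x,y)} \tfrac{\mu_1(x)\mu_2(y)}{\mu_{12}(x,y)}(K_1 \wedge K_2), n_1+n_2\bigr)$. Since the reduction is purely algebraic (tensor identities plus a direct appeal to an established result), no genuine obstacle arises; the only point that requires care is to confirm that the lifting of Corollary \ref{cor:tensor} preserves the precise dimension parameter $n_i$ and not merely the curvature lower bound, which is clear from its proof via Proposition \ref{pro:CDdecomposition}.
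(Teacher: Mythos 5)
Your proposal is correct and follows essentially the same route as the paper: the authors obtain Theorem \ref{thm:CartesianIV} as an immediate consequence of Theorem \ref{thm:CartesianIII} (together with the remark on general vertex measures $\mu_{12}$) by lifting the signatures via Corollary \ref{cor:tensor} to the common group $H_1\otimes H_2$ and observing that $\sigma_{1,xx'}\otimes \mathrm{I}_{d_2}$ and $\mathrm{I}_{d_1}\otimes\sigma_{2,yy'}$ commute, exactly as in the deduction of Theorem \ref{thm:CartesianII} from Theorem \ref{thm:CartesianI}. No gaps.
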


\subsection{Cheeger constants on Cartesian products} In this subsection, we discuss relations between the Cheeger constants on two graphs and on their Cartesian products.

Recall (\ref{eq:normrotationinvariant}), i.e., for any $(d\times d)$-matrix $A$ and any $B\in O(d)$ or $U(d)$, their average $(2,1)$-norm satisfies
\begin{equation}\label{eq:normtensorization}
|B A|_{2,1}=|A|_{2,1}.
\end{equation}
This ensures the following relation between the Cheeger constants.

\begin{thm}\label{thm:CartesianCheeger}
	Let $(G_1, 1_{\mathbf{V_1}}, \sigma_1)$ and $(G_2, 1_{\mathbf{V_2}}, \sigma_2)$ be two graphs with $\sigma_i: E_i^{or}\to~H_i,$ $i=1,2$. Assume that they satisfy
	$$CD^{\sigma_1}(K_1,n_1)\,\,\text{ and }\,\,CD^{\sigma_2}(K_2, n_2),$$ respectively. Suppose that $H_1$ and $H_2$ are embedded in a group $H_{12}$ such that $H_1$ and $H_2$ commute. Define $\sigma_{12}$ as in (\ref{eq:signCartesianI}). Then the $kl$-way Cheeger constant $h_{kl}^{{\sigma}_{12}}$ of their Cartesian product graph $(G_1\times G_2, 1_{\mathbf{V_1\times V_2}}, {\sigma}_{12})$, with the edge weight $w_{12}$ given in (\ref{eq:weightCartesianI}), satisfies
	$$h_{kl}^{{\sigma}_{12}}\leq h_k^{\sigma_1}+h_l^{\sigma_2}.$$
	
\end{thm}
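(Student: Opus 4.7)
The plan is to construct an explicit nontrivial $kl$-subpartition of $V_1 \times V_2$ from optimal subpartitions on the two factors, and to show that the Cheeger functional $\phi^{\sigma_{12}}$ on each piece is bounded by the sum of the corresponding factor Cheeger functionals. Concretely, I would choose nontrivial $k$- and $l$-subpartitions $\{A_i\}_{i=1}^k$ of $V_1$ and $\{B_j\}_{j=1}^l$ of $V_2$ achieving $h_k^{\sigma_1}$ and $h_l^{\sigma_2}$, respectively, and consider the product subpartition $\{A_i \times B_j\}_{i,j}$ of $V_1 \times V_2$, which is clearly nonempty and pairwise disjoint.

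Next, I would split the calculation of $\phi^{\sigma_{12}}(A_i \times B_j)$ into the boundary and the frustration parts. Every edge of the product has one of the two types appearing in the definition of $w_{12}$, so a short combinatorial count gives
\[
|E(A_i\times B_j,V_1\times V_2\setminus (A_i\times B_j))| \,=\, |B_j|\,|E(A_i,V_1\setminus A_i)| + |A_i|\,|E(B_j,V_2\setminus B_j)|.
\]
For the frustration part, let $\tau_1: A_i\to H_1$ and $\tau_2: B_j\to H_2$ be minimizers in Definition \ref{defn:frustration index} for $A_i$ and $B_j$, and define a candidate switching function $\tau_{12}: A_i\times B_j \to H_{12}$ by $\tau_{12}(x,y):=\tau_1(x)\,\tau_2(y)$. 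For a type-1 product edge with $\{x_1,x_2\}\in E_1$ and $y\in B_j$, the commutativity of $H_1$ and $H_2$ inside $H_{12}$ lets me factor
\[
\sigma_{12,(x_1,y)(x_2,y)}\,\tau_{12}(x_2,y)-\tau_{12}(x_1,y) \,=\, \tau_2(y)\bigl(\sigma_{1,x_1x_2}\tau_1(x_2)-\tau_1(x_1)\bigr),
\]
and then the rotation-invariance (\ref{eq:normrotationinvariant}) of the average $(2,1)$-norm, applied to the unitary/orthogonal left factor $\tau_2(y)\in H_{12}$, eliminates $\tau_2(y)$ from the norm. The analogous identity holds for type-2 edges, and summation over $y\in B_j$ (resp.\ $x\in A_i$) produces
\[
\iota^{\sigma_{12}}(A_i\times B_j) \,\leq\, |B_j|\,\iota^{\sigma_1}(A_i) + |A_i|\,\iota^{\sigma_2}(B_j).
\]

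Adding the frustration and boundary contributions, dividing by $\mu(A_i\times B_j)=|A_i|\,|B_j|$, and separating variables immediately yields
\[
\phi^{\sigma_{12}}(A_i\times B_j) \,\leq\, \phi^{\sigma_1}(A_i) + \phi^{\sigma_2}(B_j) \,\leq\, h_k^{\sigma_1} + h_l^{\sigma_2},
\]
so taking the maximum over $(i,j)$ and using the definition of the $kl$-way Cheeger constant as a minimum over subpartitions finishes the proof.

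The only delicate point is the factorization step in the frustration calculation: one must use the commutativity of $H_1,H_2$ in $H_{12}$ to pass $\tau_2(y)$ outside the bracket and then apply (\ref{eq:normrotationinvariant}); without the commutativity assumption the identity would fail and the factors of $\tau_2(y)$ could not be stripped off by the norm invariance. Everything else is routine counting using the product structure of the edge set and weights defined in (\ref{eq:weightCartesianI}). The curvature hypotheses $CD^{\sigma_i}(K_i,n_i)$ play no role here and can be ignored for this statement.
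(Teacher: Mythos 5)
Your proposal is correct and follows essentially the same route as the paper: the paper first proves the frustration bound $\iota^{\sigma_{12}}(S_1\times S_2)\leq |S_1|\iota^{\sigma_2}(S_2)+|S_2|\iota^{\sigma_1}(S_1)$ as a separate lemma using the product switching function $\tau_1\tau_2$, commutativity, and the rotation invariance (\ref{eq:normrotationinvariant}), then combines it with the same boundary count and the product subpartition $\{A_i\times B_j\}$. Your observation that the $CD^{\sigma_i}$ hypotheses are not used is also consistent with the paper's argument.
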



We first show the following lemma.
\begin{lemma}\label{lemma:cartesianFrustration}
For any subsets $S_i\subseteq V_i,\,i=1,2$, we have
\begin{equation}
\iota^{\sigma_{12}}(S_1\times S_2)\leq |S_1|\iota^{\sigma_2}(S_2)+|S_2|\iota^{\sigma_1}(S_1).
\end{equation}
\end{lemma}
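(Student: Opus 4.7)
The plan is to construct an explicit switching function on $S_1 \times S_2$ from optimal switching functions on the factors, and then verify that the resulting sum splits cleanly into a horizontal part and a vertical part.

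First, I would pick $\tau_1: S_1 \to H_1$ and $\tau_2: S_2 \to H_2$ that achieve the minima in Definition~\ref{defn:frustration index} for $\iota^{\sigma_1}(S_1)$ and $\iota^{\sigma_2}(S_2)$, respectively. Using the embedding of $H_1, H_2$ into $H_{12}$, I would then define a switching function $\tau_{12}: S_1 \times S_2 \to H_{12}$ by
\[
\tau_{12}(x,y) := \tau_1(x)\tau_2(y),
\]
which is well defined and (by commutativity of $H_1$ and $H_2$) equals $\tau_2(y)\tau_1(x)$. By definition of $\iota^{\sigma_{12}}(S_1 \times S_2)$ as an infimum, it suffices to estimate the objective functional associated to this particular $\tau_{12}$.

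Next, I would split the edge set of the induced subgraph on $S_1 \times S_2$ into horizontal edges (of the form $\{(x_1,y),(x_2,y)\}$ with $\{x_1,x_2\} \in E_{S_1}$, $y\in S_2$) and vertical edges (of the form $\{(x,y_1),(x,y_2)\}$ with $x\in S_1$, $\{y_1,y_2\}\in E_{S_2}$). For a horizontal edge, the definitions (\ref{eq:weightCartesianI}) and (\ref{eq:signCartesianI}) together with the commutativity of $H_1,H_2$ inside $H_{12}$ yield
\[
\sigma_{12,(x_1,y)(x_2,y)}\tau_{12}(x_2,y)-\tau_{12}(x_1,y)
= \tau_2(y)\bigl(\sigma_{1,x_1x_2}\tau_1(x_2)-\tau_1(x_1)\bigr).
\]
Applying (\ref{eq:normrotationinvariant}) (left invariance of the average $(2,1)$-norm under multiplication by the unitary $\tau_2(y)$), the $(2,1)$-norm of this quantity equals $|\sigma_{1,x_1x_2}\tau_1(x_2)-\tau_1(x_1)|_{2,1}$, and the weight simplifies to $w_{1,x_1x_2}$. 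Summing over $y\in S_2$ and $\{x_1,x_2\} \in E_{S_1}$ therefore contributes exactly $|S_2|\,\iota^{\sigma_1}(S_1)$. The same argument, with the roles of the two factors swapped, shows that vertical edges contribute $|S_1|\,\iota^{\sigma_2}(S_2)$.

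Adding the two contributions gives the desired upper bound. The only subtle point is to place $\tau_2(y)$ (respectively $\tau_1(x)$) on the \emph{left} before invoking the norm invariance property~(\ref{eq:normrotationinvariant}); this is the step where commutativity of $H_1$ and $H_2$ inside $H_{12}$ is essential, since $|\cdot|_{2,1}$ is in general not right invariant. Everything else is routine bookkeeping with the definitions in (\ref{eq:weightCartesianI}) and (\ref{eq:signCartesianI}).
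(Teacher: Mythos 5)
Your proposal is correct and follows essentially the same route as the paper: choose optimal switching functions $\tau_1,\tau_2$ on the factors, set $\tau_{12}(x,y)=\tau_1(x)\tau_2(y)$, split the edges of $S_1\times S_2$ into horizontal and vertical ones, and use commutativity of $H_1,H_2$ together with the left invariance (\ref{eq:normrotationinvariant}) to reduce each part to the corresponding frustration index. Your explicit remark about why $\tau_2(y)$ must be moved to the left before invoking norm invariance is a point the paper leaves implicit, but the argument is the same.
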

\begin{proof}
Let $\tau_i: S_i\to H_i$ be the function that achieves the frustration index $\iota^{\sigma_i}(S_i)$. Set $\tau:=\tau_1 \tau_2: S_1\times S_2\to H_{12}$. Then, by definition, we calculate
\begin{align*}
\iota^{{\sigma}_{12}}&(S_1\times S_2)\\
\leq &\sum_{y\in S_2}\sum_{\{x,x'\}\in E_{S_1}}w_{1,xx'}|\sigma_{1,xx'}\tau_1(x')\tau_2(y)-\tau_1(x)\tau_2(y)|_{2,1}\\
&+\sum_{x\in S_1}\sum_{\{y,y'\}\in E_{S_2}}w_{2,yy'}| \sigma_{2,yy'}\tau_1(x)\tau_2(y')-\tau_1(x)\tau_2(y)|_{2,1}\\
=&|S_2| \sum_{\{x,x'\}\in E_{S_1}}w_{1,xx'}|\sigma_{1,xx'}\tau_1(x')-\tau_1(x)|_{2,1}\\
&+|S_1| \sum_{\{y,y'\}\in E_{S_2}}w_{2,yy'}|\sigma_{2,yy'}\tau_2(y')-\tau_2(y)|_{2,1}.
\end{align*}
In the last equality, we used that $H_1$ and $H_2$ commute and (\ref{eq:normtensorization}). This implies the lemma immediately.
\end{proof}
\begin{proof}[Proof of Theorem \ref{thm:CartesianCheeger}]
For any two subsets $S_i\subseteq V_i, \, i=1,2$, it is straightforward to check that
\begin{equation*}
|E(S_1\times S_2, V_1\times V_2\setminus S_1\times S_2)|\leq |S_2||E(S_1, V_1\setminus S_1)|+|S_1||E(S_2, V_2\setminus S_2)|.
\end{equation*}
Combining this with Lemma \ref{lemma:cartesianFrustration}, and using the fact $|S_1\times S_2|=|S_1||S_2|$, we obtain
\begin{equation*}
\phi^{{\sigma}_{12}}(S_1\times S_2)\leq \phi^{\sigma_1}(S_1)+\phi^{\sigma_2}(S_2).
\end{equation*}
Then the theorem follows immediately, by definition, since every nontrivial $k$-subpartition $\left\{S_1^{(i)}\right\}_{i=1}^k$ of $V_1$ and every nontrivial $l$-subpartition $\left\{S_2^{(j)}\right\}_{j=1}^l$ of $V_2$ induce a nontrivial $kl$-subpartition $\left\{S_1^{(i)} \times S_2^{(j)}\right\}_{i=1,j=1}^{k,l}$ of $V_1 \times V_2$.
\end{proof}

Note that Theorem \ref{thm:CartesianCheeger} can be applied in the following particular case: Given two signatures $\sigma_i: E_i^{or}\to~H_i,$ $i=1,2$, we can embed them into $H_1\otimes H_2$ by identifying $H_1$ with $H_1\otimes I_{d_2}$ and $H_2$ with $I_{d_1}\otimes H_2$, respectively. Then the signature $\sigma_{12}$ on $G_1\times G_2$ coincides with the signature $\widehat{\sigma}_{12}$ given in (\ref{eq:signCartesianII}).

For graphs with nonconstant vertex measures, we can extend the above proof to obtain the following result.
\begin{thm}\label{thm:CartesianCheegerII}
Let $(G_1, \mu_1, \sigma_1)$ and $(G_2, \mu_2, \sigma_2)$ be two graphs with $\sigma_i: E_i^{or}\to~H_i,$ $i=1,2$. Then the $kl$-way Cheeger constant $h_{kl}^{\widehat{\sigma}_{12}}$ of their Cartesian product graph $(G_1\times G_2, 2\mu_{1}\mu_2, \widehat{\sigma}_{12})$, with the edge weight $w^{\Box}_{12}$ given in (\ref{eq:weightCartesianII}), satisfies
$$h_{kl}^{\widehat{\sigma}_{12}}\leq \frac{1}{2}(h_k^{\sigma_1}+h_l^{\sigma_2}).$$
\end{thm}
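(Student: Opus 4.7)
The plan is to mirror the proof of Theorem \ref{thm:CartesianCheeger} while carefully tracking how the nonconstant vertex measures $\mu_1, \mu_2$ and the modified edge weights $w^{\Box}_{12}$ in \eqref{eq:weightCartesianII} interact. The factor $\frac{1}{2}$ should emerge from the doubling $2\mu_1\mu_2$ of the vertex measure on $G_1 \times G_2$, which enters the denominator of $\phi^{\widehat{\sigma}_{12}}$ but does not appear in the corresponding numerator terms.

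First I would establish a weighted analogue of Lemma \ref{lemma:cartesianFrustration}: for any $S_1 \subseteq V_1$ and $S_2 \subseteq V_2$,
\[
\iota^{\widehat{\sigma}_{12}}(S_1 \times S_2) \leq \mu_2(S_2)\,\iota^{\sigma_1}(S_1) + \mu_1(S_1)\,\iota^{\sigma_2}(S_2).
\]
The argument is as in Lemma \ref{lemma:cartesianFrustration}: take $\tau_i:S_i \to H_i$ achieving $\iota^{\sigma_i}(S_i)$, regard them as matrices in $H_1 \otimes H_2$ via the tensor embedding used in \eqref{eq:signCartesianII}, and test the frustration index with $\tau(x,y) := (\tau_1(x) \otimes \mathrm{I}_{d_2})(\mathrm{I}_{d_1} \otimes \tau_2(y))$. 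Splitting edges of the induced subgraph of $S_1 \times S_2$ into horizontal and vertical ones, applying the edge-weight definition $w^{\Box}_{12,(x_1,y)(x_2,y)} = w_{1,x_1 x_2}\mu_2(y)$ and $w^{\Box}_{12,(x,y_1)(x,y_2)} = w_{2,y_1 y_2}\mu_1(x)$, and using the rotation invariance \eqref{eq:normrotationinvariant} of the average $(2,1)$-norm (combined with the fact that the two tensor factors commute inside $H_1 \otimes H_2$) converts the horizontal sum to $\mu_2(S_2)\,\iota^{\sigma_1}(S_1)$ and the vertical sum to $\mu_1(S_1)\,\iota^{\sigma_2}(S_2)$.

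Next I would compute the boundary measure analogously: a direct splitting of edges leaving $S_1 \times S_2$ gives
\[
|E(S_1 \times S_2, V_1 \times V_2 \setminus S_1 \times S_2)| = \mu_2(S_2)\,|E(S_1,V_1\setminus S_1)| + \mu_1(S_1)\,|E(S_2,V_2\setminus S_2)|.
\]
Combining the two estimates with $(2\mu_1\mu_2)(S_1 \times S_2) = 2\mu_1(S_1)\mu_2(S_2)$ in the denominator, the cross-terms factor cleanly and yield
\[
\phi^{\widehat{\sigma}_{12}}(S_1 \times S_2) \leq \tfrac{1}{2}\phi^{\sigma_1}(S_1) + \tfrac{1}{2}\phi^{\sigma_2}(S_2),
\]
which is exactly where the factor $\frac{1}{2}$ appears.

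Finally, picking a connected nontrivial $k$-subpartition $\{S_1^{(i)}\}_{i=1}^k$ of $V_1$ achieving $h_k^{\sigma_1}$ and an analogous $l$-subpartition $\{S_2^{(j)}\}_{j=1}^l$ of $V_2$ achieving $h_l^{\sigma_2}$, the collection $\{S_1^{(i)} \times S_2^{(j)}\}_{i,j}$ is a nontrivial $kl$-subpartition of $V_1 \times V_2$, and the estimate above gives
\[
h_{kl}^{\widehat{\sigma}_{12}} \leq \max_{i,j}\phi^{\widehat{\sigma}_{12}}(S_1^{(i)} \times S_2^{(j)}) \leq \tfrac{1}{2}\bigl(h_k^{\sigma_1} + h_l^{\sigma_2}\bigr).
\]
The only potential obstacle is verifying that the tensor-product test switching function $\tau$ still yields an upper bound for the frustration index in $H_1 \otimes H_2$; this is handled by the same commutation trick used in \eqref{eq:signCartesianII} together with the rotation invariance of $|\cdot|_{2,1}$, so no new ideas beyond Lemma \ref{lemma:cartesianFrustration} are needed.
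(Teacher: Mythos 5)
Your proposal is correct and is exactly the extension the paper intends: the paper only states that Theorem \ref{thm:CartesianCheegerII} follows by "extending the above proof," and your weighted frustration lemma, the boundary-measure splitting with the factors $\mu_2(S_2)$ and $\mu_1(S_1)$, and the observation that the vertex measure $2\mu_1\mu_2$ produces the factor $\tfrac{1}{2}$ are precisely the required modifications of Lemma \ref{lemma:cartesianFrustration} and the proof of Theorem \ref{thm:CartesianCheeger}. No gaps.
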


\section{\\Frustration index and spanning trees}\label{section:appendixSpanningTree}
In Section \ref{subsection:spanning tree}, we showed that, in the case $H=U(1)$, there is an easier way to calculate the frustration index of a subset $S\subseteq V$. Recall that the frustration index $\iota^\sigma(S)$ is defined as the minimum of $\sum_{\{x,y\}\in S}w_{xy}|\sigma_{xy}\tau(y)-\tau(x)|_{2,1}$ over all possible switching functions $\tau$ on $S$. Theorem \ref{thm:spanning tree} tells that it is enough to take the minimum of
\begin{equation}\label{eq:appendixFrustration}
\sum_{\{x,y\}\in S}w_{xy}|\sigma_{xy}\tau_T(y)-\tau_T(x)|_{2,1}
\end{equation}
over all spanning trees $T$ of $S$, where $\tau_T$ is an arbitrary representative of the set
$$C_T(S)=\{\tau: S\to U(1): \tau \,\,\text{ is constant on }\,\,T\,\,\text{w.r.t.}\,\,\sigma\}.$$
Recall that (\ref{eq:appendixFrustration}) is well defined because for any two $\tau_1,\tau_2\in C_T(S)$, there exists $z\in U(1)$, such that $\tau_1=\tau_2z$, and hence
\begin{equation}\label{eq:appendixnorm}
 |\sigma_{xy}\tau_1(y)-\tau_1(x)|_{2,1}=|(\sigma_{xy}\tau_2(y)-\tau_2(x))z|_{2,1}=|\sigma_{xy}\tau_2(y)-\tau_2(x)|_{2,1}.
\end{equation}
That is, the quantity $|\sigma_{xy}\tau_T(x)-\tau_T(y)|_{2,1}$ does not depend on the choice of $\tau_T\in C_T(S)$.

 It is natural to ask whether Theorem \ref{thm:spanning tree} can be generalized to higher dimensional signatures, i.e., $H=U(d)$, for $d\geq 2$. We first observe that, for the signature $\sigma: E^{or}\to U(d)$, $d\geq 2$, the quantity (\ref{eq:appendixFrustration}) is not well defined since it depends on the representatives! Note that, for any $(d\times d)$-matrices $A$ and $B\in U(d)$, we do not always have $|AB|_{2,1}=|A|_{2,1}$ (recall that $|BA|_{2,1}=|A|_{2,1}$). But this is needed in the verification of (\ref{eq:appendixnorm}).

However, if we use the Frobenius norm $|\cdot|_F$ instead, (\ref{eq:appendixFrustration}) is still well defined. In this section, we present a counterexample to show that Theorem \ref{thm:spanning tree} does not hold for higher dimensional signatures, even if we use the Frobenius norm in the definition of the frustration index.

Recall that Lemma \ref{lemma:1dimgeometry}, which is a statement about the metric space $S^1=U(1)$, plays a crucial role in the proof of Theorem \ref{thm:spanning tree}. This lemma does not generalize to higher dimensional spheres. Already in $S^2$, we have the following counterexample: For three equidistributed points $P_1, P_2, P_3$ on a meridian close to the north pole $N$, we have
$$d(P_1, P_2)+d(P_1, P_3)> d(N, P_1)+d(N, P_2)+d(N, P_3),$$
where $d$ denotes the intrinsic distance in $S^2$. Lifting this example into $U(2)$ by using the Hopf fibration $S^1\to SU(2)\cong S^3\to S^2$, we obtain the following  matrices in $U(2)$:
\begin{equation}
 A_0=\left(
       \begin{array}{cc}
         0 & -1 \\
         1 & 0 \\
       \end{array}
     \right),\,\,
A_j=\left(
      \begin{array}{cc}
        re^{i\alpha_k} & -\sqrt{1-r^2} \\
        \sqrt{1-r^2} & re^{-i\alpha_k} \\
      \end{array}
    \right), \,\,k=1,2,3,
\end{equation}
where $r\in [0,1],\,\alpha_k=2(k-1)\pi/3$. We check that
\begin{equation*}
 |A_k-A_l|_{F}=\sqrt{6}r,\,\,\forall\,\,1\leq k\neq l\leq 3,
\end{equation*}
and
\begin{equation*}
 |A_0-A_k|_{F}=2\sqrt{1-\sqrt{1-r^2}},\,\,\forall\,\,k=1,2,3.
\end{equation*}
Therefore, for small $r$ (e.g., when $r\leq 0.85$),
\begin{equation}\label{eq:counterLemma3.11}
\sum_{k=1}^3|A_0-A_k|_{F}<\min_{A\in \{A_1, A_2, A_3\}}\sum_{k=1}^3|A-A_k|_{F}.
\end{equation}
(\ref{eq:counterLemma3.11}) implies that the generalization of Lemma \ref{lemma:1dimgeometry} does not hold in $U(2)$.

For later purposes, we transform one of the matrices $\{A_1,A_2, A_3\}$ to be the identity matrix $I_2$. Set
\begin{equation*}
 B_0=A_3^{-1}A_0, B_k=A_3^{-1}A_k, \,\,k=1,2,3.
\end{equation*}
Then we have $B_3=I_2$. Using the definition of the Frobenius norm, we obtain
\begin{equation}\label{eq:equilateral}
  |B_1-B_2|_{F}=|B_2-B_3|_{F}=|B_3-B_1|_{F}=\sqrt{6}r,
\end{equation}
and, for small $r$,
\begin{equation}\label{eq:counterLemma3.11B}
\sum_{k=1}^3|B_0-B_k|_{F}<\min_{B\in \{B_1, B_2, B_3\}}\sum_{k=1}^3|B-B_k|_{F}\leq |B_1-I_2|_{F}+|B_2-I_2|_{F}.
\end{equation}

Let us consider the graph shown in Figure \ref{F4}. This is a graph with vertex set $V=\{x,y,z,w\}$, edge set $E=\{\{x,y\},\{y,z\}, \{z,w\}, \{w,x\}, \{y,w\}\}$ and a signature $\sigma: E^{or}\to U(2)$ as shown in the figure.

\begin{figure}[h]
\centering
\includegraphics[width=0.35\textwidth]{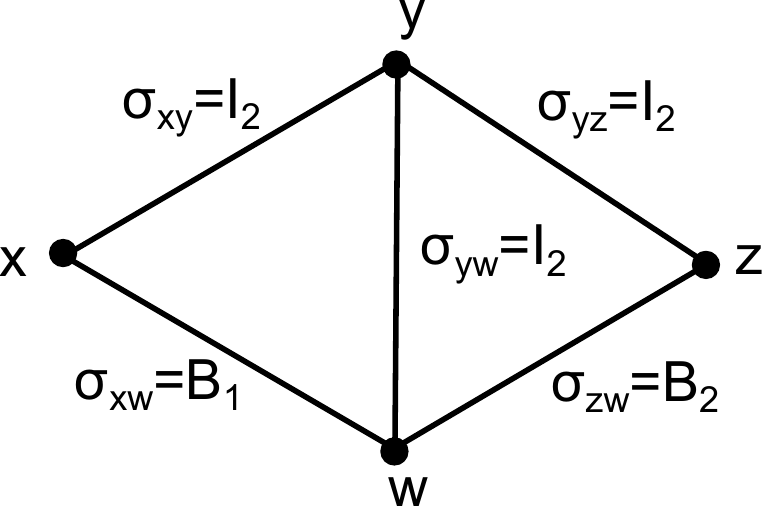}
\caption{A counterexample\label{F4}}
\end{figure}

\begin{pro}
For the graph as shown in Figure \ref{F4}, we have
\begin{equation*}
 \iota_F^\sigma(V):=\min_{\tau:V\to U(2)}\sum_{\{x,y\}\in E_S}w_{xy}|\sigma_{xy}\tau(y)-\tau(x)|_F<\min_{T\in \mathbb{T}_S}\sum_{\{x,y\}\in E}|\sigma_{xy}\tau_T(y)-\tau_T(x)|_{F},
\end{equation*}
where $\tau_T$ is a representative of the set $$C_T(V)=\{\tau: V\to U(2): \tau \,\,\text{ is constant on }\,\,T\,\,\text{w.r.t.}\,\,\sigma\}.$$
\end{pro}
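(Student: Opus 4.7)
The plan is to exploit the geometric inequality \eqref{eq:counterLemma3.11B} in $U(2)$ on the graph of Figure \ref{F4} by comparing the cost of an explicit switching function that makes use of $B_0$ against the costs of all constant switching functions on spanning trees, each of which is forced (by the spanning tree constraint) to take values only in the triangle $\{B_1,B_2,B_3\}$ after switching to a normalised representative.

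First, I would fix a vertex, say $x$, and exploit that every switching function can be normalised so that $\tau(x)=I_2$; under this normalisation, a function $\tau_T\in C_T(V)$ is fully determined by $\sigma$ along the three edges of the spanning tree $T$. The eight spanning trees of the graph (the graph is $K_4$ minus the edge $\{x,z\}$, so $\mathbb{T}_V$ has $8$ elements; the two forbidden $3$-edge subgraphs are the triangles $\{y,z,w\}$ and $\{x,y,w\}$) can then be enumerated directly: for each $T$, the sum $\sum_{\{u,v\}\in E}|\sigma_{uv}\tau_T(v)-\tau_T(u)|_F$ reduces to a sum of exactly two nonzero terms coming from the two non-tree edges. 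The signature in Figure \ref{F4} is designed so that, in every such case, each of the two non-tree terms evaluates to a pairwise distance $|B_i-B_j|_F=\sqrt{6}\,r$ among the triangle $\{B_1,B_2,B_3\}$; hence the cost of every spanning-tree representative equals $2\sqrt{6}\,r$.

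Next, I would exhibit a single switching function $\tau^*:V\to U(2)$ that places $B_0$ at the vertex where the three edges with signatures mapping to $B_1,B_2,B_3$ converge. Because $B_0$ is not on any path of matrices forced by a spanning tree, $\tau^*$ does not come from any $C_T(V)$. A direct computation, using left-invariance of the Frobenius norm under multiplication by the unitary $A_3^{-1}$, gives
\begin{equation*}
\sum_{\{u,v\}\in E}|\sigma_{uv}\tau^*(v)-\tau^*(u)|_F \;=\; \sum_{k=1}^{3}|B_0-B_k|_F.
\end{equation*}
Combining this with the previous step and invoking \eqref{eq:counterLemma3.11B} for $r\leq 0.85$ yields
\begin{equation*}
\iota_F^{\sigma}(V)\;\leq\;\sum_{k=1}^{3}|B_0-B_k|_F\;<\;2\sqrt{6}\,r\;=\;\min_{T\in\mathbb{T}_V}\sum_{\{u,v\}\in E}|\sigma_{uv}\tau_T(v)-\tau_T(u)|_F,
\end{equation*}
which is the desired strict inequality.

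The main obstacle is the case analysis in the enumeration of the eight spanning trees: one must verify that in each case the two non-tree edges contribute a term of the form $|B_i-B_j|_F$ with $i,j\in\{1,2,3\}$ (and not, for example, a vanishing term due to an accidental cancellation with $B_0$). This hinges on the specific placement of the signatures in Figure \ref{F4}, in particular on the fact that $B_0$ is never the signature of an edge but only arises in the optimal $\tau^*$. Once this combinatorial check is carried out, the analytical content of the counterexample reduces entirely to the already established inequality \eqref{eq:counterLemma3.11B} in $U(2)$, which encodes the failure of the one-dimensional geometric fact in Lemma~\ref{lemma:1dimgeometry} in higher dimensional unitary groups.
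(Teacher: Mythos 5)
Your proposal is correct and follows essentially the same route as the paper: exhibit the switching function placing $B_0$ at the appropriate vertices to get cost $\sum_{k=1}^3|B_0-B_k|_F$, verify by direct enumeration that all eight spanning-tree representatives cost $|B_1-I_2|_F+|B_2-I_2|_F=2\sqrt6\,r$, and conclude via the inequality (\ref{eq:counterLemma3.11B}). The only (immaterial) difference is that you compute all tree costs first and then compare, whereas the paper first compares against the star $T_1$ and then asserts the equality of the remaining tree costs.
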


\begin{proof}
Observe that $T_1=(V, \{\{x,y\}, \{y,w\}, \{y,z\}\})$ is a spanning tree and that the function $\tau_{T_1}\equiv I_2$ is a constant function on $T_1$ with respect to $\sigma$. We calculate
\begin{align*}
 \sum_{\{x,y\}\in E}|\sigma_{xy}\tau_{T_1}(y)-\tau_{T_1}(x)|_{F}&=|B_1-I_2|_{2,1}+|B_2-I_2|_{F}\\
&\stackrel{(\ref{eq:counterLemma3.11B})}{>}\sum_{k=1}^3|B_0-B_k|_{F}\\
&=\sum_{\{x,y\}\in E}|\sigma_{xy}\tau_0(y)-\tau_0(x)|_{F},
\end{align*}
where the switching function $\tau_0$ is defined via $\tau_0(x)=\tau_0(y)=\tau_0(z)=B_0,\,\,\tau_0(w)=I_2$. Therefore, by definition, we have
\begin{equation}\label{eq:iotaVT1}
 \iota_F^\sigma(V)<\sum_{\{x,y\}\in E}|\sigma_{xy}\tau_{T_1}(y)-\tau_{T_1}(x)|_{F}.
\end{equation}
The graph in Figure \ref{F4} has $8$ spanning trees, which we denote by $T_i, \,\, i=1,2,\ldots, 8$. We claim that
\begin{equation}\label{eq:T1T8}
 \sum_{\{x,y\}\in E}|\sigma_{xy}\tau_{T_i}(y)-\tau_{T_i}(x)|_{F}=|B_1-I_2|_{F}+|B_2-I_2|_{F} \,\,\forall\,\, i=1,2,\ldots,8.
\end{equation}
The proposition then follows immediately from (\ref{eq:iotaVT1}) and (\ref{eq:T1T8}).

Claim (\ref{eq:T1T8}) can be checked directly with the help of (\ref{eq:equilateral}) for all choices of spanning trees.
\end{proof}

\section*{Acknowledgements}
This work was supported by the EPSRC Grant EP/K016687/1 "Topology, Geometry
and Laplacians of Simplicial Complexes". FM acknowledges the hospitality of the Department of Mathematical Sciences of Durham University during his visit in February 2015. Parts of this work were completed while SL and FM were attending the \emph{Spring School: Discrete Ricci curvature} in Institut Henri Poincar\'{e} (I.H.P.), Paris, during May 18-22, 2015, and while SL and NP were visiting the Max-Planck-Institut f\"{u}r Mathematik, Bonn, during July 1-31, 2015. We thank these institutes for their hospitality.

\end{document}